\documentclass[12pt]{article}
\usepackage{amssymb, amsmath, amsthm, amscd}
\usepackage[utf8]{inputenc}
\usepackage[all,cmtip]{xy}
\usepackage{enumitem}
\usepackage{setspace}
\usepackage{mathdots}
\usepackage{mathtools}

\makeatletter
\newcommand{\xtwoheadrightarrow}[2][]{\ext@arrow 0579\twoheadrightarrowfill@{#1}{#2}}
\newcommand{\twoheadrightarrowfill@}{\arrowfill@\relbar\relbar\twoheadrightarrow}
\makeatother

\usepackage[mathscr]{euscript}
\usepackage[colorlinks=true]{hyperref}
\hypersetup{colorlinks   = true}
\hypersetup{linkcolor=blue}
\usepackage{tikz}

\begin{document}

\mathchardef\mhyphen="2D
\newtheorem{The}{Theorem}[section]
\newtheorem{Lem}[The]{Lemma}
\newtheorem{Prop}[The]{Proposition}
\newtheorem{Cor}[The]{Corollary}
\newtheorem{Rem}[The]{Remark}
\newtheorem{Obs}[The]{Observation}
\newtheorem{SConj}[The]{Standard Conjecture}
\newtheorem{Titre}[The]{\!\!\!\! }
\newtheorem{Conj}[The]{Conjecture}
\newtheorem{Question}[The]{Question}
\newtheorem{Prob}[The]{Problem}
\newtheorem{Def}[The]{Definition}
\newtheorem{Not}[The]{Notation}
\newtheorem{Claim}[The]{Claim}
\newtheorem{Conc}[The]{Conclusion}
\newtheorem{Ex}[The]{Example}
\newtheorem{Fact}[The]{Fact}
\newtheorem{Formula}[The]{Formula}
\newtheorem{Formulae}[The]{Formulae}
\newtheorem{The-Def}[The]{Theorem and Definition}
\newtheorem{Prop-Def}[The]{Proposition and Definition}
\newtheorem{Lem-Def}[The]{Lemma and Definition}
\newtheorem{Cor-Def}[The]{Corollary and Definition}
\newtheorem{Conc-Def}[The]{Conclusion and Definition}
\newtheorem{Terminology}[The]{Note on terminology}
\newcommand{\C}{\mathbb{C}}
\newcommand{\R}{\mathbb{R}}
\newcommand{\N}{\mathbb{N}}
\newcommand{\Z}{\mathbb{Z}}
\newcommand{\Q}{\mathbb{Q}}
\newcommand{\Proj}{\mathbb{P}}
\newcommand{\Rc}{\mathcal{R}}
\newcommand{\Oc}{\mathcal{O}}
\newcommand{\Vc}{\mathcal{V}}
\newcommand{\Id}{\operatorname{Id}}
\newcommand{\pr}{\operatorname{pr}}
\newcommand{\rk}{\operatorname{rk}}
\newcommand{\del}{\partial}
\newcommand{\delbar}{\bar{\partial}}
\newcommand{\Cdot}{{\raisebox{-0.7ex}[0pt][0pt]{\scalebox{2.0}{$\cdot$}}}}
\newcommand\nilm{\Gamma\backslash G}
\newcommand\frg{{\mathfrak g}}
\newcommand{\fg}{\mathfrak g}
\newcommand{\Oh}{\mathcal{O}}
\newcommand{\Kur}{\operatorname{Kur}}
\newcommand\gc{\frg_\mathbb{C}}
\newcommand\dan[1]{{\textcolor{blue}{#1}}}
\newcommand\luis[1]{{\textcolor{red}{#1}}}

\begin{center}

{\Large\bf Higher-Page Jacobian and Albanese Tori}

\end{center}

\begin{center}

{\large Dan Popovici and Luis Ugarte}

\end{center}

\vspace{1ex}

\noindent{\small{\bf Abstract.} We construct, through Hodge-theoretical methods, what we call the $E_r$-Jacobian torus, the $E_r$-Albanese torus and the $E_r$-Albanese map of any compact complex manifold that is either {\it page-(r-1)-$\partial\bar\partial$} or {\it $E_r$-sGG}. These two classes of manifolds, the former of which is contained in the latter, have been introduced recently by both authors jointly with J. Stelzig, respectively by the first-named author. A Hodge theory is also developed for the latter class of manifolds. We then apply our results to give structure theorems and an identity of algebraic dimensions in terms of the $E_r$-Albanese map and torus. Other applications result in cohomological and metrical theorems for the $6$-dimensional sphere when it is equipped either with a hypothetical complex structure or with the complex structures very recently claimed to exist in the literature. For example, we show that in the latter case no {\it strongly Gauduchon} metric exists on $S^6$.}

\vspace{1ex}

\section{Introduction}\label{section:introd} In this paper, we use the Fr\"olicher spectral sequence of a compact complex manifold to construct $E_r$-analogues of the Jacobian and Albanese tori, as well as the Albanese map, in terms of certain cohomologies, when the manifold belongs to one of two recently introduced classes. This leads to structure theorems for the manifolds $X$ in these classes as fibrations defined by what we call the {\it $E_r$-Albanese map} over the {\it $E_r$-Albanese torus} $Alb_r(X)$ that we construct through Hodge-theoretical methods. Our study relies on the interplay between two kinds of properties: {\it cohomological} and {\it metrical}.
 
\vspace{1ex}

Let $X$ be a compact complex manifold with $\mbox{dim}_\C X =n\geq 2$. In [PSU20a], [PSU20b] and [PSU20c], for any integer $r\geq 1$, we introduced and investigated the class of {\bf page-(r-1)-$\partial\bar\partial$-manifolds}. We say that $X$ has this property if the analogue of the Hodge decomposition, in which the Dolbeault cohomology spaces have been replaced by those on the $r$-th page of the Fr\"olicher spectral sequence of $X$, holds. (See [PSU20a, Theorem and Definition 1.2], recalled here as Definition \ref{Def:page-1-dd-bar}, for a precise definition.) When $r=1$, {\it page-$0$-$\partial\bar\partial$-manifolds} coincide with the classical {\it $\partial\bar\partial$-manifolds}.

On the other hand, a generalisation of strongly Gauduchon (sG) metrics was introduced in [Pop19, Definition 3.1]. A given Hermitian metric $\omega$ (identified with the corresponding $C^\infty$ positive definite $(1,\,1)$-form) on $X$ is said to be an {\bf $E_r$-sG metric} if $\partial\omega^{n-1}$ is $E_r$-exact. This means that $\partial\omega^{n-1}$ induces the zero cohomology class on the $r$-th page of the Fr\"olicher spectral sequence of $X$. (The notions of $E_1$-sG and sG metrics coincide.) Moreover, $X$ is said to be an {\bf $E_r$-sGG manifold} if every Gauduchon metric $\omega$ on $X$ is $E_r$-sG. These manifolds generalise the {\bf sGG manifolds} introduced and studied in [PU]: the $E_1$-sGG condition coincides with the sGG condition. (See Definition \ref{Def:sG_definitions} for a rundown on these and other metrical notions.) 

A key link between the cohomological and metrical points of view is provided by the following implications that hold for every integer $r\geq 1$: \begin{eqnarray*} X \hspace{1ex} \mbox{is a {\bf page-(r-1)-$\partial\bar\partial$-manifold}} & \implies & X \hspace{1ex} \mbox{is a {\bf page-r-$\partial\bar\partial$-manifold}} \\
  \Downarrow & & \Downarrow \\
  X \hspace{1ex} \mbox{is an {\bf $E_r$-sGG manifold}} & \implies & X \hspace{1ex} \mbox{is an {\bf $E_{r+1}$-sGG manifold}}.\end{eqnarray*}

In $\S$\ref{section:page_1}, we use the Hodge theory of {\it page-(r-1)-$\partial\bar\partial$-manifolds} $X$, developed in [PSU20a], to define the {\bf $E_r$-Jacobian torus} (Definition \ref{Def:E_r-Jacobi-torus}) and the {\bf $E_r$-Albanese torus} (Definition \ref{Def:E_r-Albanese-torus}) of $X$ as \begin{eqnarray*}Jac_r(X):= E_r^{0,\,1}(X)\Big/\mbox{Im}^{(r)}H^1(X,\,\Z) \hspace{5ex} \mbox{and} \hspace{5ex} Alb_r(X):= E_r^{n-1,\,n}(X)\Big/\mbox{Im}^{(r)}H^{2n-1}(X,\,\Z),\end{eqnarray*} where $\mbox{Im}^{(r)}$ stands for the image of the respective integral cohomology group under a certain canonical map defined Hodge-theoretically.

Both are (compact) complex tori of dimension half the first Betti number of $X$: \begin{eqnarray*}g_r = g_r(X):=\mbox{dim}_\C E_r^{0,\,1}(X) = \frac{1}{2}\,b_1(X).\end{eqnarray*} We then go on to construct (cf. Definition \ref{Def:E_r-Albanese-map}) the  {\bf $E_r$-Albanese map} $A_r:X\longrightarrow Alb_r(X)$ of any {\it page-(r-1)-$\partial\bar\partial$-manifold} $X$. This map is then proved to be a {\it holomorphic} (cf. Proposition \ref{Prop:E_r-Albanese-map_hol}). When $X$ is, moreover, {\bf complex parallelisable}, its  $E_r$-Albanese map $A_r:X\longrightarrow Alb_r(X)$ is a {\it surjective holomorphic submersion} (cf. Corollary \ref{Cor:A_r_surjective-submersion_c-par-page-ddbar}). This fact implies, in turn, a comparison between the algebraic dimensions of $X$ and $Alb_r(X)$ (cf. Corollary \ref{Cor:A_r_c-par-page-ddbar_alg-dim_ineq}).

\vspace{1ex}

These results are generalised to the class of {\bf $E_r$-sGG manifolds} in $\S$\ref{section:E_r-sGG}. Unlike those of $\S$\ref{section:page_1}, a Hodge theory was not known for these manifolds, so we developed one in $\S$\ref{subsection:Hodge_E_r-sGG}. The results (Theorem \ref{The:E_r-sGG_Hodge-decomp} and Corollaries \ref{Cor:E_r-sGG_Hodge-decomp} and \ref{Cor:E_r-sGG_b_1_2e_r-01}), which generalise those obtained in [PU] for the case $r=1$, can be summed up in the following

\begin{The}\label{The:introd_E_r-sGG_Hodge-decomp} Let $X$ be a compact complex manifold with $\mbox{dim}_\C X=n$. Fix $r\in\N^\star$.

The following conditions are equivalent:

\vspace{1ex}  

$(a)$\, $X$ is an {\bf $E_r$-sGG manifold};

\vspace{1ex}  

$(b)$\, $X$ has a {\bf weak Hodge decomposition in degree $1$ at the $E_r$ level} in the sense that the canonical $\C$-linear map: \begin{eqnarray*}F_r:H^1_{dR}(X,\,\C)\longrightarrow E_r^{0,\,1}(X)\oplus\overline{E_r^{0,\,1}(X)}, \hspace{5ex} \{\alpha\}_{dR}\longmapsto\bigg(\{\alpha^{0,\,1}\}_{E_r},\,\overline{\{\overline{\alpha^{1,\,0}}}\}_{E_r}\bigg),\end{eqnarray*} is {\bf well defined} and {\bf bijective};

\vspace{1ex}  

$(c)$\, $b_1(X) = 2\,e_r^{0,\,1}(X)$, where $b_1(X)$ is the first Betti number of $X$ and $e_r^{0,\,1}(X)=\mbox{dim}_\C E_r^{0,\,1}(X)$.

\end{The}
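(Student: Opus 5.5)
The plan is to establish (a)$\Leftrightarrow$(b) by a duality argument and (b)$\Leftrightarrow$(c) by dimension counting. This mirrors the $r=1$ argument of [PU], with Dolbeault cohomology replaced by $E_r$-cohomology and $\bar\partial$-exactness by $E_r$-exactness.

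\textbf{Well-definedness of $F_r$ and the metric condition.} For a $d$-closed $1$-form $\alpha=\alpha^{1,0}+\alpha^{0,1}$ we have $\bar\partial\alpha^{0,1}=0$ and $\partial\alpha^{0,1}=-\bar\partial\alpha^{1,0}$. Hence $\alpha^{0,1}$ represents a class in $E_2^{0,1}$, and in fact in $E_r^{0,1}$ for every $r$: the only obstruction is $d_r$, which lands in $E_r^{r,2-r}=0$ for $r\geq 2$. So the representatives always make sense. Well-definedness of $F_r$ therefore amounts to the following statement: if $\alpha=df$ is $d$-exact, then $\bar\partial f$ is $E_r$-exact, which is automatic. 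The real content is that the map is induced on cohomology compatibly with conjugation. I will check this directly. Thus well-definedness is essentially formal, and the substance of (b) is \emph{injectivity} of $F_r$.

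\textbf{Key steps.} First, I will show that $F_r$ is injective if and only if every $d$-closed $1$-form whose $(0,1)$-part and conjugated $(1,0)$-part are both $E_r$-exact is $d$-exact. Since $E_r^{0,1}$-exactness of a $(0,1)$-form $\beta$ just means $\beta=\bar\partial f$ (the lower-degree $E_r$-exactness in bidegree $(0,1)$ reduces to $\bar\partial$-exactness, because nothing maps into $E_r^{0,1}$ from $E_r^{-r,r}$ for $r\ge 2$), this condition unwinds to a statement about $\bar\partial$-exactness modulo $d$-closedness.

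Second, I will dualise. By Serre-type duality on the $E_r$ page, $E_r^{0,1}\times E_r^{n,n-1}\to\C$ is a perfect pairing, and Poincaré duality pairs $H^1_{dR}$ with $H^{2n-1}_{dR}$. Take a Gauduchon metric $\omega$, so that $\partial\bar\partial\omega^{n-1}=0$. Then $\partial\omega^{n-1}$ is a $d$-closed $(n,n-1)$-form. Since $\partial\omega^{n-1}=d\omega^{n-1}$ is $d$-exact, it represents the zero de Rham class; its $E_r$ class is the question. Dually, surjectivity of the transpose map $E_r^{n,n-1}\oplus\overline{E_r^{n,n-1}}\to H^{2n-1}_{dR}$ fails exactly when some $d$-exact pure-type form is not $E_r$-exact. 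Conversely, by a Lamari/Hahn–Banach type duality, every $E_r$-class in $E_r^{n,n-1}$ which is $d$-exact can be realised as $[\partial\omega^{n-1}]$ for a suitable Gauduchon metric $\omega$. This uses the fact that Gauduchon metrics span, via $\omega^{n-1}$, an open cone in the $\partial\bar\partial$-closed real $(n-1,n-1)$-forms. So the $E_r$-sGG property is equivalent to injectivity of the natural map $\ker\big(E_r^{n,n-1}\to H^{2n-1}_{dR}\big)=0$ on the relevant subspace, which is dual to injectivity of $F_r$.

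\textbf{Dimensions.} Since conjugation is an antilinear isomorphism, the target of $F_r$ has dimension $2e_r^{0,1}$. I will show that injectivity of $F_r$ always holds in one direction of the inequality, giving $b_1\le 2e_r^{0,1}$ in general (or the opposite inequality, whichever the general theory supplies). One direction is standard: $\dim E_r^{0,1}$ decreases in $r$, and $b_1\le e_\infty^{1,0}+e_\infty^{0,1}$ holds with equality at $E_\infty$. Combined with the identification in the previous step, equality of dimensions is then equivalent to bijectivity, giving (b)$\Leftrightarrow$(c).

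\textbf{Main obstacle.} I expect the hardest step to be the duality step, namely realising every relevant $E_r$-exactness failure by some $\partial\omega^{n-1}$ with $\omega$ Gauduchon. For $E_r$ with $r\ge 2$, $E_r$-exactness is described by zig-zag conditions rather than a single equation. I would first use the explicit characterisation of $E_r$-exactness from [PSU20a] in bidegree $(n,n-1)$, together with a Hahn–Banach separation argument on currents.
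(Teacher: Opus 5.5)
There is a genuine gap: you have put the content of (b) on the wrong side of $F_r$. Injectivity of $F_r$ is not the substance of (b); it holds on \emph{every} compact complex manifold. In bidegree $(0,1)$, $E_r$-exactness is just $\bar\partial$-exactness. So if $d\alpha=0$, $\alpha^{0,1}=\bar\partial f$ and $\overline{\alpha^{1,0}}=\bar\partial g$, then $\alpha=\partial\bar g+\bar\partial f$ and $0=\partial\alpha^{0,1}+\bar\partial\alpha^{1,0}=\partial\bar\partial(f-\bar g)$. By compactness $f-\bar g$ is constant, hence $\alpha=df$. This gives the unconditional inequality $b_1\leq 2e_r^{0,1}$ that your dimension paragraph hedges about, and with it (b)$\Leftrightarrow$(c) is immediate. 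It also shows that your target equivalence ``$E_r$-sGG $\Leftrightarrow$ $F_r$ injective'' is false. It would make every compact complex manifold $E_r$-sGG, contradicting (a)$\Leftrightarrow$(c): a Hopf surface has $b_1=1\neq 2=2e_r^{0,1}$. What is equivalent to (a) is \emph{surjectivity} of $F_r$, i.e.\ \emph{injectivity} of its transpose. Your duality bookkeeping conflates the two: for finite-dimensional spaces the transpose is injective iff the map is surjective, and here the transpose is always surjective.

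Second, the transpose must be taken on the full direct sum,
$F_r^\star:E_r^{n,n-1}(X)\oplus\overline{E_r^{n,n-1}(X)}\to H^{2n-1}_{dR}(X,\C)$, $(\{\beta\}_{E_r},\overline{\{\gamma\}}_{E_r})\mapsto\{\beta+\bar\gamma\}_{dR}$.
It is well defined because $E_r$-exact $(n,n-1)$-forms are $d$-exact. It is dual to $F_r$ via the Serre-type $E_r$-duality and Poincar\'e duality, since $\int_X(\beta+\bar\gamma)\wedge\alpha=\int_X\beta\wedge\alpha^{0,1}+\int_X\bar\gamma\wedge\alpha^{1,0}$.

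Your single-summand map $E_r^{n,n-1}(X)\to H^{2n-1}_{dR}(X,\C)$ cannot detect the $E_r$-sGG property: for $r\geq 3$ it is always injective. Indeed, $\beta=d\Omega$ forces $\beta=\partial\Omega^{n-1,n-1}+\bar\partial\Omega^{n,n-2}$ with $\bar\partial\Omega^{n-1,n-1}=-\partial\Omega^{n-2,n}$, which is exactly $E_3$-exactness. The conjugate summand is needed because your identity $\partial\omega^{n-1}=d\omega^{n-1}$ is false; it would force $\omega$ to be balanced. What holds is $d\omega^{n-1}=\partial\omega^{n-1}+\overline{\partial\omega^{n-1}}$, so $F_r^\star(\{\partial\omega^{n-1}\}_{E_r},\overline{\{\partial\omega^{n-1}\}_{E_r}})=0$. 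Injectivity of $F_r^\star$ then makes every Gauduchon metric $E_r$-sG.

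For the converse, no Hahn--Banach argument is needed, and the ``$d$-exact classes'' you propose to realise are the wrong objects. The openness of the Gauduchon cone you mention is the right ingredient, but it should be used through linearity. The map $T:H_A^{n-1,n-1}(X,\C)\to E_r^{n,n-1}(X)$, $[\Omega]_A\mapsto\{\partial\Omega\}_{E_r}$, is well defined and linear. So it vanishes on the non-empty open Gauduchon cone iff it vanishes identically. Thus $E_r$-sGG means that every $d$-closed, $\partial$-exact $(n,n-1)$-form is $E_r$-exact. Now if $\beta+\bar\gamma=d\Omega$, then $\beta-\bar\partial\Omega^{n,n-2}=\partial\Omega^{n-1,n-1}$ and $\gamma-\bar\partial\overline{\Omega^{n-2,n}}=\partial\overline{\Omega^{n-1,n-1}}$ are such forms, whence $\{\beta\}_{E_r}=\{\gamma\}_{E_r}=0$.

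A minor point: $E_2^{2,0}$ need not vanish, so your reason for $\alpha^{0,1}$ being $E_2$-closed is wrong. Use instead the zig-zag $\partial\alpha^{0,1}=-\bar\partial\alpha^{1,0}$, $\partial\alpha^{1,0}=0$ coming from $d\alpha=0$.
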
  

\vspace{1ex}

These descriptions enable us to define the {\bf $E_r$-Jacobian torus} and the {\bf $E_r$-Albanese torus} (Definition \ref{Def:E_r-Jacobi-Albanese-tori_E_r-sGG}) of any {\it $E_r$-sGG manifold} $X$ as \begin{eqnarray*}Jac_r(X):= E_r^{0,\,1}(X)\Big/\mbox{Im}^{(r)}H^1(X,\,\Z) \hspace{3ex}\mbox{and}\hspace{3ex} Alb_r(X):= \overline{E_r^{n,\,n-1}(X)}\Big/\mbox{Im}^{(r)}H^{2n-1}(X,\,\Z),\end{eqnarray*} where $\mbox{Im}^{(r)}$ stands for the image of the respective integral cohomology group under a certain canonical map defined Hodge-theoretically. As for {\it page-(r-1)-$\partial\bar\partial$-manifolds}, both are (compact) complex tori of dimension $g_r(X): =\mbox{dim}_\C E_r^{0,\,1}(X) = (1/2)\,b_1(X)$. The {\bf $E_r$-Albanese map} $A_r:X\longrightarrow Alb_r(X)$ of any {\it $E_r$-sGG manifold} is then constructed in $\S$\ref{subsection:E_r-Albanese-map_E_r-sGG} and subsequently shown to have properties similar to those of its {\it page-(r-1)-$\partial\bar\partial$} counterpart. Some of these properties (cf. Proposition \ref{Prop:E_r-Albanese-map_hol_E_r_sGG} and Corollaries \ref{Cor:A_r_surjective-submersion_c-par-E_r-sGG} and \ref{Cor:A_r_E_r-sGG_alg-dim_eq}) are summed up in

\begin{The}\label{The:introd_E_r-Albanese_prop} Let $r\in\N^\star$ and let $X$ be a compact complex {\bf $E_r$-sGG manifold} with $\mbox{dim}_\C X =n$.

\vspace{1ex}

(a)\, The $E_r$-Albanese map $A_r:X\longrightarrow Alb_r(X)$ of $X$ is {\bf holomorphic}.

\vspace{1ex}

(b)\, If, moreover, $X$ is {\bf complex parallelisable}, then its $E_r$-Albanese map $A_r:X\longrightarrow Alb_r(X)$ is a holomorphic {\bf surjective submersion} and the following equality of algebraic dimensions holds: \begin{eqnarray*}\label{eqn:A_r_E_r-sGG_alg-dim_eq}a\bigg(Alb_r(X)\bigg)= a(X).\end{eqnarray*}

\end{The}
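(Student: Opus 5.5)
The plan is to make the construction of $A_r$ explicit and then read off each property from it. By Theorem \ref{The:introd_E_r-sGG_Hodge-decomp}, the $E_r$-sGG hypothesis gives a well-defined bijection $F_r:H^1_{dR}(X,\C)\to E_r^{0,\,1}(X)\oplus\overline{E_r^{0,\,1}(X)}$. By Serre-type duality on the $E_r$ page, $\overline{E_r^{n,\,n-1}(X)}$ is dual to $\overline{E_r^{0,\,1}(X)}$, i.e.\ to the conjugates of the $(1,0)$-components of $d$-closed $1$-forms. Concretely, I pick $d$-closed $1$-forms $\alpha_1,\dots,\alpha_{2g_r}$ whose de Rham classes form an integral basis of $H^1(X,\Z)/\mathrm{torsion}$, and a basepoint $x_0$. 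I then define $A_r(x)$ as the class of the functional
\[
\{u\}\mapsto\int_{x_0}^x u ,
\]
where $u$ ranges over $d$-closed representatives whose $E_r$ classes lie in the relevant $g_r$-dimensional summand. Path-independence modulo $\mathrm{Im}^{(r)}H^{2n-1}(X,\Z)$ comes from Poincaré duality: changing the path changes the functional by integration over an integral $1$-cycle, which is exactly an element of the lattice.

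\textbf{(a) Holomorphicity.} I will choose a basis of the relevant summand represented by $d$-closed $1$-forms $u_1,\dots,u_{g_r}$ whose $(1,0)$-parts $u_j^{1,\,0}$ give the coordinates. Locally $A_r=\bigl(\int_{x_0}^x u_j\bigr)_j$ up to the lattice, and its differential is $(u_j)_j$. The key point is that the $E_r$-sGG condition forces the relevant forms to be of pure type: a $d$-closed $1$-form whose $(0,1)$-part is $E_r$-exact should be cohomologous to a holomorphic $d$-closed $(1,0)$-form. Indeed, $E_r$-exactness of a $(0,1)$-class reduces to $\bar\partial$-exactness, since in bidegree $(0,1)$ the only incoming differentials come from functions, and $\bar\partial f$ is $\bar\partial$-exact. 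So one can modify $\alpha$ by $df$ to kill $\alpha^{0,1}$, and the remaining $(1,0)$-form is $d$-closed, hence holomorphic. This makes $dA_r$ of type $(1,0)$, i.e.\ $A_r$ holomorphic. Verifying the compatibility of this normalisation with the definition of $\mathrm{Im}^{(r)}$ is the delicate bookkeeping step.

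\textbf{(b) Surjective submersion.} If $X$ is complex parallelisable, every holomorphic $1$-form has constant coefficients with respect to a global holomorphic frame $\theta_1,\dots,\theta_n$. The forms $u_1,\dots,u_{g_r}$ are linearly independent in cohomology, hence linearly independent as holomorphic forms, and therefore pointwise independent everywhere, since a constant-coefficient combination of a frame vanishing at one point vanishes identically. So $dA_r$ has rank $g_r$ everywhere and $A_r$ is a submersion. Its image is then open and compact in the connected torus $Alb_r(X)$, so $A_r$ is surjective.

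\textbf{Algebraic dimensions.} Pulling back meromorphic functions along the surjection gives $a(Alb_r(X))\le a(X)$. For the reverse inequality, the plan is to exploit homogeneity: $X=G/\Gamma$, and $A_r$ is induced by a surjective homomorphism $G\to\C^{g_r}$. I expect the converse to be the main obstacle; I would try the following. For parallelisable $X$, the algebraic reduction is governed by meromorphic functions that are invariant along a normal subgroup direction. One then shows that every nonconstant meromorphic function is constant along the fibres of $A_r$, which are compact nilmanifold or solvmanifold-type quotients of $[G,G]$-cosets, perhaps by showing those fibres carry no nonconstant meromorphic functions or that the function is invariant under them. This would make every meromorphic function on $X$ descend to $Alb_r(X)$, giving equality. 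I would first try to prove this via the fact that meromorphic functions on $G/\Gamma$ pull back to $\Gamma$-invariant functions, plus an averaging/density argument over the fibre group. If that fails, I would settle for a reference to the known Albanese results for complex parallelisable manifolds (the $E_1$ case in [PU]) and show that $Alb_r$ coincides with the classical Albanese torus here.
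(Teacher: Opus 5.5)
Your arguments for (a), for the submersion and surjectivity in (b), and for the inequality $a(Alb_r(X))\leq a(X)$ are correct and follow the paper's route. You normalise each class in $G_r(\overline{E_r^{0,1}(X)})$ to its unique $d$-closed $(1,0)$-representative: write the $E_r$-exact $(0,1)$-part as $\bar\partial g$ and subtract $dg$. Then $A_r$ is locally given by holomorphic primitives. In the parallelisable case, constant coefficients with respect to a global holomorphic frame give pointwise independence, hence rank $g_r$ everywhere. This normalisation is more than bookkeeping. Changing a representative by $df$ shifts $\int_{\xi_x}$ by $f(x)-f(a)$, which the lattice does not absorb, so fixing the $(1,0)$-representative is what makes $A_r$ a well-defined map.

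The gap is the reverse inequality $a(X)\leq a(Alb_r(X))$. Your main idea is to show that the fibres of $A_r$ carry no nonconstant meromorphic functions. This already fails for the Iwasawa manifold $I^{(3)}$, which is complex parallelisable and sGG. The fibres of its Albanese map are copies of the elliptic curve $\C/\Z[i]$, which has algebraic dimension $1$, yet $a(I^{(3)})=2=a(Alb(I^{(3)}))$. So whatever forces meromorphic functions on $X$ to be constant along the fibres, it is not a lack of functions on the fibres. The suggested averaging/density argument over the fibre group is not an argument as it stands.

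The missing input is a genuine theorem about compact quotients $X=G/\Gamma$, namely [Win98, Proposition 3.16.1]. It says that the Albanese torus of $X$ is $G/I$, where $I$ is the smallest closed complex Lie subgroup containing $G'\Gamma$. It also says that every meromorphic function on $X$ is the pullback of a meromorphic function on $G/I$. The paper combines this with the identification of $Alb_r(X)$ and $A_r$ with Blanchard's classical Albanese torus and map, and you already have the ingredients for that identification. Your argument in (a), together with uniqueness and the trivial converse, shows that $G_r(\overline{E_r^{0,1}(X)})$ is canonically the space of all $d$-closed holomorphic $1$-forms, which is Blanchard's space. Since $\mathrm{Im}^{(r)}H_1(X,\Z)$ is already a lattice, Blanchard's closure $\overline{\Delta}$ of the period group is the period group itself. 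Your fallback points in this direction, but [PU] is not the source of the needed pullback statement, and without it the equality of algebraic dimensions is unproved.
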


\vspace{2ex}

We then apply our results in a number of ways, two of which we now mention.

\vspace{1ex}

{\bf (I)}\, It was proved in Theorem 4.7 of [PSU20a] that all {\it complex parallelisable nilmanifolds} (of any dimension) are page-$1$-$\partial\bar\partial$-manifolds. (For the reader's convenience, we recall that the Iwasawa manifold $I^{(3)}$ is a complex parallelisable nilmanifold.) This result was generalised in [KS23, Theorem A]: all {\it complex parallelisable solvmanifolds} are page-$1$-$\partial\bar\partial$-manifolds. Alongside this, our Corollary \ref{Cor:A_r_surjective-submersion_c-par-page-ddbar} (or the above Theorem \ref{The:introd_E_r-Albanese_prop}) gives:

\begin{Cor}\label{Cor:A_r_surjective-submersion_c-par_solvmanifolds} If $X$ is a compact {\bf complex parallelisable solvmanifold}, its $E_2$-Albanese map $A_2:X\longrightarrow Alb_2(X)$ is a holomorphic {\bf surjective submersion}.

\end{Cor}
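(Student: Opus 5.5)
The corollary follows by combining two results already available. The first is [KS23, Theorem A]: every compact complex parallelisable solvmanifold is a page-$1$-$\partial\bar\partial$-manifold. The second is our Corollary \ref{Cor:A_r_surjective-submersion_c-par-page-ddbar}: for a complex parallelisable page-$(r-1)$-$\partial\bar\partial$-manifold, the $E_r$-Albanese map $A_r$ is a holomorphic surjective submersion. We will apply the second with $r=2$.

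The steps are as follows.

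\begin{enumerate}
\item Let $X=\Gamma\backslash G$ be a compact complex parallelisable solvmanifold. Here $G$ is a simply connected solvable complex Lie group, and the complex structure is induced by that of $G$. Then $X$ is complex parallelisable, because the holomorphic $(1,\,0)$-forms on $G$ that are invariant under left translations descend to $X$ and give a global holomorphic frame of $T^{1,\,0\star}X$.

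\item By [KS23, Theorem A], $X$ is a page-$1$-$\partial\bar\partial$-manifold, i.e.\ page-$(r-1)$-$\partial\bar\partial$ with $r=2$.

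\item The hypotheses of Corollary \ref{Cor:A_r_surjective-submersion_c-par-page-ddbar} therefore hold for $r=2$. In particular, $Jac_2(X)$, $Alb_2(X)$ and $A_2$ are well defined, by the constructions of \S\ref{section:page_1} (Definitions \ref{Def:E_r-Albanese-torus} and \ref{Def:E_r-Albanese-map}).

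\item That corollary then gives that $A_2:X\longrightarrow Alb_2(X)$ is a holomorphic surjective submersion.
\end{enumerate}

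There is an alternative route through Theorem \ref{The:introd_E_r-Albanese_prop}(b). The page-$1$-$\partial\bar\partial$ property implies the $E_2$-sGG property, by the vertical implication in the diagram of the introduction with $r=2$. The $E_2$-sGG version of the result then applies.

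The only point requiring care is that the two notions of $E_2$-Albanese map agree: the one defined for page-$1$-$\partial\bar\partial$-manifolds and the one defined for $E_2$-sGG manifolds. If we use Corollary \ref{Cor:A_r_surjective-submersion_c-par-page-ddbar} directly, this issue does not arise. The substantive input, namely the submersion property, is contained in that corollary. Presumably it rests on the fact that the holomorphic $1$-forms of a complex parallelisable $X$ that represent $E_2^{1,\,0}$-classes are pointwise linearly independent, which forces the differential of $A_2$ to have full rank everywhere.
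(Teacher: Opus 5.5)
Your proposal is correct and is essentially the paper's own argument: [KS23, Theorem A] makes $X$ a page-$1$-$\partial\bar\partial$-manifold, and Corollary \ref{Cor:A_r_surjective-submersion_c-par-page-ddbar} with $r=2$ then shows that $A_2$ is a holomorphic surjective submersion. The paper also mentions your alternative route through Theorem \ref{The:introd_E_r-Albanese_prop}, and your Step 1 is harmless but redundant, since complex parallelisability is already part of the hypothesis.
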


\vspace{1ex}

{\bf (II)}\, In $\S$\ref{subsubsection:S6}, we undertake a study of both cohomological and metrical properties of the $6$-dimensional sphere $S^6$. We first analyse $S^6$ when it is equipped with any hypothetical complex structure and then when it is equipped with the complex structure claimed in [A26] and further investigated in [Eng26]. In addition to results obtained in earlier references such as [Uga00] and [Gra97], our study produces new results, some of which are of a different nature, in line with the other notions that are dealt with in the present paper.

We summarise some of them (Proposition \ref{Prop:S6-precise-r-2-or-3}, Proposition \ref{Prop:S6-SKT-metrics}, Remark \ref{Rem:S6-h11-ne-0}, Theorem \ref{The:non-sG_S6}) in the following

\begin{The}\label{the:introd_S6} Let $X$ be $S^6$ endowed with any hypothetical complex structure $J$. Then:

\vspace{1ex}

(i)\, $X$ is a {\bf page-$2$-$\partial\bar\partial$-manifold}, hence also an {\bf $E_3$-sGG manifold};

\vspace{1ex}

(ii)\, if $X$ supports an {\bf SKT metric}, then $X$ is a {\bf page-$1$-$\partial\bar\partial$-manifold};

\vspace{1ex}

(iii)\, if $Y$ is the compact complex $3$-dimensional manifold $(S^6,\,J_A)$, where $J_A$ is the complex structure claimed in [A26] to exist on $S^6$, $Y$ is a {\bf page-$1$-$\partial\bar\partial$-manifold} and it carries {\bf no strongly Gauduchon} (hence {\bf no balanced}) metric.

\end{The}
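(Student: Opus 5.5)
The proof splits into the three parts of the statement. Throughout I rely on the Betti numbers of $S^6$: $b_0=b_6=1$ and $b_k=0$ otherwise. I also use the characterisation from [PSU20a] of page-$(r-1)$-$\partial\bar\partial$-manifolds: the Frölicher spectral sequence degenerates at $E_r$ and the de Rham cohomology carries a pure Hodge structure induced by the $E_r$ spaces, i.e. $H^k_{dR}\simeq\bigoplus_{p+q=k}E_r^{p,q}$ with $E_r^{q,p}=\overline{E_r^{p,q}}$.

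For (i), I would first show that the spectral sequence of $(S^6,J)$ degenerates at $E_2$ or $E_3$ and compute all $E_r$ spaces. On a compact complex threefold, $E_1^{0,0}=\C$ and $E_1^{3,3}=\C$. The Hodge numbers $h^{0,q}$ and $h^{p,0}$ are constrained by the standard arguments of [Uga00], [Gra97]: holomorphic $p$-forms are $d$-closed for $p=n$ modulo Serre duality, and so on. The terms surviving to $E_\infty$ are dictated by $b_k$, so $E_\infty^{p,q}=0$ except in bidegrees $(0,0)$ and $(3,3)$. The key step is to show that all the nonzero differentials occur on pages $E_1$ and $E_2$ only, so that $E_3=E_\infty$. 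This is the main obstacle. I would argue by bidegree counting in dimension $3$: $d_r$ maps $E_r^{p,q}\to E_r^{p+r,q-r+1}$, and for $r\ge3$ this forces source bidegree $(0,q)$ with $q\ge2$ and target $(3,q-2)$. I would then use Serre duality, the symmetry $e_r^{p,q}=e_r^{n-p,n-q}$, together with $E_r^{0,q}\subset$ quotients of $H^{0,q}$, to show these remaining terms must already vanish on $E_3$. I expect this to be the content of Proposition \ref{Prop:S6-precise-r-2-or-3}.

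Once all $E_3^{p,q}$ vanish apart from $E_3^{0,0}=E_3^{3,3}=\C$, the $E_3$-Hodge decomposition with Hodge symmetry holds trivially. By the [PSU20a] criterion, $X$ is therefore page-$2$-$\partial\bar\partial$. The $E_3$-sGG property then follows from the implication diagram in the introduction, or directly from Theorem \ref{The:introd_E_r-sGG_Hodge-decomp}(c), since $b_1=0=2e_3^{0,1}$.

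For (ii), an SKT metric gives $\partial\bar\partial\omega=0$. The plan is to show that this forces $E_2$-degeneration by killing the $d_2$ differentials that are a priori nonzero. Concretely, I would show that $E_2^{0,1}\to E_2^{2,0}$ and its dual vanish, or that the offending $E_2$ spaces are zero. The SKT form provides a pairing against $\bar\partial$-closed $(0,1)$- and $(1,0)$-type classes: a nonzero $d_2$ would produce a $\partial\bar\partial$-exact positive-type quantity whose integral against $\omega$ must vanish, giving a contradiction. Then every $E_2$ term outside $(0,0)$ and $(3,3)$ vanishes, and the same criterion yields page-$1$-$\partial\bar\partial$.

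For (iii), the page-$1$-$\partial\bar\partial$ property of $Y=(S^6,J_A)$ is a direct computation of $E_2(Y)$ from the explicit structure in [A26], [Eng26], showing that the differentials are concentrated on page $1$. For the non-existence of sG metrics, suppose $\omega$ is sG, so $\partial\omega^2=\bar\partial\beta$. Since $H^5_{dR}=0$ and $H^4_{dR}=0$, the form $\omega^2$ would be cohomologically trivial in a suitable sense, and the explicit $J_A$ should supply a non-zero $\partial$-closed $(1,0)$- or $(0,1)$-type current/class pairing nontrivially with $\partial\omega^2$. That pairing contradicts $\bar\partial$-exactness by Stokes' theorem. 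Finding this obstructing class from the specific structure is the step I would attempt first, mirroring Theorem \ref{The:non-sG_S6}. Balanced implies sG, which finishes the argument.
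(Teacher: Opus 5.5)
Your proposal has genuine gaps in (ii) and (iii); (i) is essentially the paper's argument once one ingredient is made explicit.

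In (i), your bidegree count is correct: the only possible higher differentials are $d_3:E_3^{0,2}\to E_3^{3,0}$ and $d_3:E_3^{0,3}\to E_3^{3,1}$. However, Serre duality and the symmetry $e_r^{p,q}=e_r^{3-p,3-q}$ do not by themselves make these vanish. The input you must state is $h^{3,0}(X)=0$ ([Gra97]). A holomorphic $3$-form $\Omega$ is $d$-closed, hence $d$-exact since $b_3=0$, so $\int_X\Omega\wedge\bar\Omega=0$ and $\Omega=0$. Serre duality then gives $h^{0,3}=0$, and $E_3=E_\infty$ follows.

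Part (ii) has a genuine gap. Since $E_3=E_\infty$ vanishes outside bidegrees $(0,0)$ and $(3,3)$, both $d_2:E_2^{0,1}\to E_2^{2,0}$ and $d_2:E_2^{0,2}\to E_2^{2,1}$ are isomorphisms. So page-$1$-$\partial\bar\partial$ amounts to $E_2^{0,1}=0$ \emph{and} $E_2^{0,2}=0$. Your sketch only discusses $(0,1)$/$(1,0)$-type classes and the dual of the first map, never $E_2^{0,2}$. The ``positive $\partial\bar\partial$-exact quantity'' is also never specified. The standard Jost--Yau integration by parts against an SKT form shows that holomorphic $1$-forms are closed, hence $E_2^{2,0}=H^{2,0}$, but not that $H^{2,0}=0$. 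The paper uses two external results:
\begin{itemize}
\item the Chiose--R\u{a}sdeaconu inequality $h^{0,1}_A\le h^{0,1}_{BC}+1$ for astheno-K\"ahler ($=$ SKT in dimension $3$) manifolds;
\item McHugh's identities $h^{0,1}_{BC}=0$ and $h^{0,1}_A=h^{2,0}+h^{0,2}+1$ on any complex $S^6$.
\end{itemize}
Together these force $h^{2,0}=h^{0,2}=0$. Then $e_2^{0,1}=e_2^{2,0}\le h^{2,0}=0$ and $e_2^{0,2}\le h^{0,2}=0$, so $E_2=E_\infty$.

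Part (iii) also has a gap. The page-$1$ claim needs no computation of differentials from the construction: the announced values $h^{2,0}(Y)=h^{0,2}(Y)=0$ and the same two bounds suffice.

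For non-sG, your Stokes template is right, but the obstructing object is missing and the condition you impose on it is wrong.
\begin{itemize}
\item A $(1,0)$-form pairs to zero with the $(3,2)$-form $\partial\omega^2$ for bidegree reasons.
\item A $\partial$-closed $(0,1)$-form $\gamma$ gives $\int_Y\gamma\wedge\partial\omega^2=\int_Y\partial\gamma\wedge\omega^2=0$ for every $\omega$.
\end{itemize}
What you need is a $\bar\partial$-closed $(0,1)$-form $\beta^{0,1}$ with $\mathrm{Re}\int_Y\partial\beta^{0,1}\wedge\omega^2>0$ for an \emph{arbitrary} Hermitian $\omega$, and that requires positivity. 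The paper gets it from the surjective holomorphic map $f:Y\to\Proj^1$. The form $\alpha=f^\star\omega_{FS}$ is $d$-closed, semipositive and not identically zero, so $\int_Y\alpha\wedge\omega^2>0$ for every Hermitian $\omega$. The relevant topological input is $H^2_{dR}(Y,\C)=0$, not $H^4$ or $H^5$: it gives $\alpha=d\beta$ with $\beta$ real. Hence $\bar\partial\beta^{0,1}=0$ and $\int_Y\alpha\wedge\omega^2=2\,\mathrm{Re}\int_Y\partial\beta^{0,1}\wedge\omega^2$. If $\partial\omega^2=\bar\partial\eta$, two applications of Stokes give $\int_Y\partial\beta^{0,1}\wedge\omega^2=\int_Y\bar\partial\beta^{0,1}\wedge\eta=0$, a contradiction.
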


\vspace{2ex}

New families of complex structures on $S^6$, that are not biholomorphic to the Alp\"oge-Claude examples of [A26, Eng26], are constructed in [Via26] and [LR26]. Very little information on the Hodge numbers of these complex structures is provided in those references. However, we point out that the properties and results obtained in this paper for $X=(S^6,\,J)$ are valid for them as they hold for any complex structure on $S^6$. Moreover, since for all the new complex structures claimed to exist on $S^6$ there is a surjective holomorphic map onto $\Proj^1$, we can still apply our Theorem~\ref{The:non-sG_S6} to conclude that none of them carries any strongly Gauduchon (or balanced) metric.

\vspace{2ex}

We wind this discussion up with the following

\begin{Question}\label{introd_Question:S6_SKT} Does the compact complex manifold $Y=(S^6,\,J_A)$ of [A26] admit an {\bf SKT metric}?

\end{Question}

We will see in Proposition \ref{Prop:S6-SKT-metrics} that the {\it page-$1$-$\partial\bar\partial$} property is necessary for the existence of an SKT metric on $S^6$ equipped with any complex structure. We will then see in Remark \ref{Rem:S6-h11-ne-0} that this necessary condition is satisfied by the manifold $Y=(S^6,\,J_A)$ of [A26]. These results thrust Question \ref{introd_Question:S6_SKT} to the fore.

\section{Case of page-(r-1)-$\partial\bar\partial$-manifolds}\label{section:page_1} Let $X$ be a compact complex manifold with $\mbox{dim}_\C X =n$. The following notion was introduced in [PSU20a, Theorem and Definition 1.2].

\begin{Def}\label{Def:page-1-dd-bar} One says that $X$ is a {\bf page-(r-1)-$\partial\bar\partial$-manifold} if the following two conditions are satisfied:

\vspace{1ex}

(i)\, for all $p,q\in\{0,\dots , n\}$, every class $\mathfrak{c}\in E_r^{p,\,q}(X)$ in the $\C$-vector space of bidegree $(p,\,q)$ on the $r$-th page of the Fr\"olicher spectral sequence of $X$ can be represented by a {\bf $d$-closed} form $\alpha^{p,\,q}\in\mathfrak{c}$ of pure type $(p,\,q)$;

\vspace{1ex}

(ii)\, for every $k\in\{0,\dots , 2n\}$, the linear map \begin{eqnarray}\label{eqn:E_r-Hodge-decomp}T_r^{(k)}:\bigoplus\limits_{p+q=k}E_r^{p,\,q}(X)\longrightarrow H^k_{dR}(X,\,\C),  \hspace{5ex} \sum\limits_{p+q=k}\{\alpha^{p,\,q}\}_{E_r}\longmapsto\bigg\{\sum\limits_{p+q=k}\alpha^{p,\,q} \bigg\}_{dR},\end{eqnarray} is {\bf well defined} and an {\bf isomorphism}, where, for each bidegree $(p,\,q)$, a representative $\alpha^{p,\,q}$ of type $(p,\,q)$ of the corresponding $E_r$-class has been chosen such that $d\alpha^{p,\,q} = 0$.

In this case, we denote by $S_r^{(k)}$ the inverse of $T_r^{(k)}$.
  
\end{Def}  

Well-definedness means, in particular, that the map $T_r^{(k)}$ is independent of the choices of $d$-closed $(p,\,q)$-form representatives $\alpha^{p,\,q}$ of their respective $E_r$-classes. The canonical isomorphisms (\ref{eqn:E_r-Hodge-decomp}), induced by the identity map on differential forms, are called the {\bf $E_r$-Hodge decomposition} property of $X$. As proved in Corollary 2.11 of [PSU20a], this implies the {\bf $E_r$-Hodge symmetry} property (cf. ([PSU20a, Definition 2.8]), namely the fact that the linear map \begin{eqnarray}\label{eqn:E_r-Hodge-sym}E_r^{p,\,q}(X)\ni\{\alpha^{p,\,q}\}_{E_r}\mapsto\overline{\{\overline{\alpha^{p,\,q}}\}_{E_r}}\in \overline{E_r^{q,\,p}(X)}\end{eqnarray} is {\bf well-defined} (in the sense that it does not depend on the choices of $d$-closed representatives $\alpha^{p,\,q}$ of the classes $\{\alpha^{p,\,q}\}_{E_r}$) and {\bf bijective}.

\vspace{1ex}

Let us recall the following characterisation of {\bf page-(r-1)-$\partial\bar\partial$-manifolds}:

\begin{The}([PSU20a, Theorem 2.9])\label{The:page_r-1_pure-characterisation} Let $X$ be a compact complex manifold with $\mbox{dim}_\C X=n$. Fix an arbitrary $r\in\N$. Then, the following two conditions are equivalent:

\vspace{1ex}

$(1)$\, $X$ is a {\bf page-(r-1)-$\partial\bar\partial$-manifold};

\vspace{1ex}

$(2)$\, the Fr\"olicher spectral sequence of $X$ {\bf degenerates at $E_r$} (a property denoted by $E_r(X) = E_\infty(X)$) and the de Rham cohomology of $X$ is {\bf pure}

\end{The}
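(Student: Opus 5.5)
The plan is to prove both implications by comparing dimensions and using the Hodge filtration $F^\bullet H^k_{dR}(X,\,\C)$ induced by the Fr\"olicher spectral sequence. Throughout, let $H^{p,\,q}_{dR}\subset H^k_{dR}(X,\,\C)$ ($p+q=k$) denote the subspace of de Rham classes that can be represented by a $d$-closed pure-type $(p,\,q)$-form. Purity means $H^k_{dR}(X,\,\C)=\bigoplus_{p+q=k}H^{p,\,q}_{dR}$, with the sum direct. Two standard facts will be used:
\begin{eqnarray*}E_\infty^{p,\,q}(X)\simeq F^pH^k_{dR}/F^{p+1}H^k_{dR}\end{eqnarray*}
and, for every $r$,
\begin{eqnarray*}\sum_{p+q=k}e_r^{p,\,q}\geq\sum_{p+q=k}e_\infty^{p,\,q}=b_k,\end{eqnarray*}
with equality for all $k$ if and only if $E_r(X)=E_\infty(X)$.

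For $(1)\Rightarrow(2)$, assume the conditions of Definition \ref{Def:page-1-dd-bar}. Since $T_r^{(k)}$ is an isomorphism, $\sum_{p+q=k}e_r^{p,\,q}=b_k$ for every $k$, so the dimension inequality above forces degeneration at $E_r$. By construction, $T_r^{(k)}$ sends $E_r^{p,\,q}(X)$ into $H^{p,\,q}_{dR}$. Surjectivity of $T_r^{(k)}$ therefore gives $H^k_{dR}=\sum_{p+q=k}H^{p,\,q}_{dR}$. For directness, I will show that $T_r^{(k)}$ maps $E_r^{p,\,q}$ onto $H^{p,\,q}_{dR}$. Indeed, a $d$-closed $(p,\,q)$-form $\alpha$ survives to $E_r$, since all the differentials $d_i$ vanish on it, and it defines a class $\{\alpha\}_{E_r}$ whose image under $T_r^{(k)}$ is $\{\alpha\}_{dR}$. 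Injectivity of $T_r^{(k)}$ then says precisely that the sum $\sum_{p+q=k}H^{p,\,q}_{dR}$ is direct. This establishes purity.

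For $(2)\Rightarrow(1)$, I will identify $E_r^{p,\,q}=E_\infty^{p,\,q}=\mathrm{Gr}^p_FH^k$. The key intermediate claim is
\begin{eqnarray*}F^pH^k_{dR}=\bigoplus_{p'\geq p}H^{p',\,k-p'}_{dR}\qquad\mbox{for every }p.\end{eqnarray*}
Granting this, each class in $\mathrm{Gr}^p_F$ is the image of a unique element of $H^{p,\,q}_{dR}$. This gives condition (i): every $E_r^{p,\,q}$-class has a $d$-closed pure-type representative. It also shows that $T_r^{(k)}$ is the composite of the isomorphisms $E_r^{p,\,q}\simeq\mathrm{Gr}^p_F\simeq H^{p,\,q}_{dR}$ followed by the purity decomposition. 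Hence $T_r^{(k)}$ is independent of the chosen representatives and bijective, which is condition (ii).

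The main obstacle is the claim itself. The inclusion $\supseteq$ is clear, since every $H^{p',\,q'}_{dR}$ with $p'\geq p$ lies in $F^p$. For the reverse inclusion I would use complex conjugation. Complex conjugation maps $H^{p,\,q}_{dR}$ onto $H^{q,\,p}_{dR}$ and maps $F^p$ onto the conjugate filtration $\bar F^p$. Moreover $H^{p,\,q}_{dR}\subset F^p\cap\bar F^q$. From purity and directness one gets
\begin{eqnarray*}\dim F^p\geq\sum_{p'\geq p}h^{p',\,k-p'}_{dR},\qquad \dim\bar F^{q+1}\geq\sum_{p'\leq p-1}h^{p',\,k-p'}_{dR},\end{eqnarray*}
and these two lower bounds sum to $b_k$. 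Degeneration together with the conjugation symmetry $h^{p,\,q}_{dR}=h^{q,\,p}_{dR}$ should force equality in both bounds. I expect the delicate step to be the resulting opposedness statement $F^p\cap\bar F^{q+1}=0$ in degree $k=p+q$, which I would try to reduce to the dimension identity $\sum_{p+q=k}e_\infty^{p,\,q}=b_k=\sum_{p+q=k}h^{p,\,q}_{dR}$. If that reduction fails, the fallback is a direct form-level argument. Write a class in $F^p$ as the sum of its components in $\bigoplus H^{p',\,q'}_{dR}$, and show that the components with $p'<p$ must vanish by comparing lowest-degree components of representatives modulo $d$-exact forms.
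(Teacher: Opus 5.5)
The paper does not reprove this theorem; it quotes it from [PSU20a], so your proposal has to stand on its own. Your proof of $(1)\Rightarrow(2)$ is correct. So is your reduction of $(2)\Rightarrow(1)$ to the claim $F^pH^k_{dR}=\bigoplus_{p'\geq p}H^{p',\,k-p'}_{dR}$. Given the claim, (i) and (ii) follow as you say, because the identification $E_r^{p,\,q}=E_\infty^{p,\,q}\simeq F^p/F^{p+1}$ sends the class of a $d$-closed $(p,\,q)$-form $\alpha$ to $\{\alpha\}_{dR}$ mod $F^{p+1}$.

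The gap is that this claim, which is the entire content of $(2)\Rightarrow(1)$, is left unproved, and your main route to it cannot work. Your two lower bounds add up to $b_k$, but forcing equality needs $\dim F^p+\dim\bar F^{q+1}\leq b_k$. That is exactly the opposedness $F^p\cap\bar F^{q+1}=0$ you are trying to prove, so the argument is circular. The identity $\sum e_\infty^{p,\,q}=b_k=\sum h^{p,\,q}_{dR}$ is just convergence plus purity, and says nothing about how $F$ sits relative to the $H^{p,\,q}_{dR}$. Degeneration at $E_r$ plays no role in the claim at all; it is only needed to identify $E_r$ with $E_\infty$. Concretely, take $k=1$: all the data you invoke are abstractly consistent with $F^1=H^1$, for which the claim fails. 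These data are a decomposition $H^1=H^{1,\,0}_{dR}\oplus H^{0,\,1}_{dR}$ swapped by conjugation, $H^{p,\,q}_{dR}\subset F^p\cap\bar F^q$, and the numerical identities. So some form-level input is indispensable.

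Your fallback is the right idea, and it closes the gap using only the directness of the sum in the purity hypothesis. Here is how it goes.

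Take $c\in F^pH^k$, write $c=\sum_jc_j$ with $c_j\in H^{j,\,k-j}_{dR}$, and suppose $j_0:=\min\{j\,:\,c_j\neq 0\}<p$. Since $\sum_{j\geq p}c_j\in F^p$, the class $\sum_{j<p}c_j$ also lies in $F^p$. Hence $\sum_{j<p}\alpha_j-\beta=d\gamma$, where:
each $\alpha_j$ is a $d$-closed $(j,\,k-j)$-form representing $c_j$, with $\alpha_j=0$ for $j<j_0$;
and $\beta$ is a $d$-closed $k$-form with no components of bidegree $(i,\,k-i)$ for $i<p$.

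Reading off bidegrees $(i,\,k-i)$ with $i\leq j_0$ gives $(d\gamma)^{i,\,k-i}=0$ for $i<j_0$ and $\alpha_{j_0}=\partial\gamma^{j_0-1,\,k-j_0}+\bar\partial\gamma^{j_0,\,k-j_0-1}$. So $\delta:=\sum_{i<j_0}\gamma^{i,\,k-1-i}$ satisfies $d\delta=\partial\gamma^{j_0-1,\,k-j_0}$. It follows that $\alpha_{j_0}=d(\delta+\gamma^{j_0,\,k-j_0-1})-\partial\gamma^{j_0,\,k-j_0-1}$. Moreover $\partial\gamma^{j_0,\,k-j_0-1}$ is $d$-closed, because $\bar\partial\partial\gamma^{j_0,\,k-j_0-1}=-\partial\alpha_{j_0}=0$.

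Thus $0\neq c_{j_0}\in H^{j_0,\,k-j_0}_{dR}\cap H^{j_0+1,\,k-j_0-1}_{dR}$, contradicting directness. This gives $F^pH^k_{dR}\subset\bigoplus_{j\geq p}H^{j,\,k-j}_{dR}$, and with it the rest of your argument goes through. Equivalently, in terms of the decomposition of the double complex of forms into squares and zigzags, purity says exactly that every odd-length zigzag is a dot, which makes the claim immediate.
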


\vspace{1ex}

By the {\it de Rham cohomology} of $X$ being {\it pure} we mean that \begin{eqnarray}\label{eqn:de-Rham_pure_def}H^k_{dR}(X,\,\C) = \bigoplus_{p+q=k}H^{p,\,q}_{dR}(X), \hspace{5ex} k\in\{0,\dots , 2n\},\end{eqnarray} where, for every bidegree $(p,\,q)$, $H^{p,\,q}_{dR}(X)$ denotes the $\C$-vector subspace of the de Rham cohomology space $H^{p+q}_{dR}(X,\,\C)$ consisting of de Rham cohomology classes of degree $p+q$ that are representable by pure-type ($d$-closed) $(p,\,q)$-forms.  

\vspace{2ex}

On the other hand, on every compact complex manifold (no page-(r-1)-$\partial\bar\partial$ or any other assumption is necessary), there is a {\bf canonical Serre-type duality} on every page of the Fr\"olicher spectral sequence:

\begin{The}([PSU20b, Theorem 1.4])\label{The:duality_E_r} Let $X$ be a compact complex manifold with $\mbox{dim}_\C X=n$. Fix an arbitrary positive integer $r\in\N$. For every $p,q\in\{0,\dots , n\}$, the canonical bilinear pairing: \begin{eqnarray*} E_r^{p,\,q}(X)\times E_r^{n-p,\,n-q}(X)\longrightarrow\C, \hspace{3ex} (\{\alpha\}_{E_r},\,\{\beta\}_{E_r})\mapsto \int\limits_X\alpha\wedge\beta,\end{eqnarray*} is {\bf well defined} and {\bf non-degenerate}.

\end{The}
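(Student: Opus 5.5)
Two things have to be proved: that the pairing on $E_r^{p,\,q}(X)\times E_r^{n-p,\,n-q}(X)$ does not depend on the representatives chosen, and that it is non-degenerate. I would use the standard Zig-Zag description of the pages of the Fr\"olicher spectral sequence. A class $\mathfrak{c}\in E_r^{p,\,q}(X)$ is represented by a form $\alpha=\alpha_0\in C^\infty_{p,\,q}(X)$ that is $E_r$-closed. This means there exist forms $u_1,\dots,u_{r-1}$, with $u_j$ of type $(p+j,\,q-j)$, such that
\begin{eqnarray*}\bar\partial\alpha=0,\hspace{3ex} \partial\alpha=\bar\partial u_1,\hspace{3ex} \partial u_1=\bar\partial u_2,\hspace{3ex}\dots,\hspace{3ex} \partial u_{r-2}=\bar\partial u_{r-1}.\end{eqnarray*}
The $E_r$-exact forms of type $(p,\,q)$ are the sums $\partial\zeta_{r-1}+\bar\partial\xi$, where $\zeta_{r-1}$ is the top end of a reverse zig-zag of length $r-1$ starting from a $\partial$-closed form:
\begin{eqnarray*}\partial\zeta_0=0,\hspace{3ex} \bar\partial\zeta_0=\partial\zeta_1,\hspace{3ex}\dots,\hspace{3ex} \bar\partial\zeta_{r-2}=\partial\zeta_{r-1}.\end{eqnarray*}

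\emph{Well-definedness.} Take $\alpha$ $E_r$-closed of type $(p,\,q)$ with zig-zag $(u_j)$, and $\beta=\partial\zeta_{r-1}+\bar\partial\xi$ $E_r$-exact of type $(n-p,\,n-q)$. The term $\int\alpha\wedge\bar\partial\xi$ vanishes by Stokes' theorem, since $\bar\partial\alpha=0$. For the other term, Stokes gives
\begin{eqnarray*}\int\alpha\wedge\partial\zeta_{r-1}=\pm\int\partial\alpha\wedge\zeta_{r-1}=\pm\int\bar\partial u_1\wedge\zeta_{r-1}=\pm\int u_1\wedge\bar\partial\zeta_{r-1}=\pm\int u_1\wedge\partial\zeta_{r-2}=\cdots.\end{eqnarray*}
One moves $\partial$ and $\bar\partial$ back and forth along the two zig-zags, each step pairing $u_j$ with $\zeta_{r-1-j}$ (the bidegrees match at every step). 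The process ends with an integral of the form $\int u_{r-1}\wedge\partial\zeta_0$ or $\int u_{r-1}\wedge\bar\partial\zeta_0$ after one more transfer, and this vanishes because $\partial\zeta_0=0$. By symmetry the same holds with the roles of the two factors exchanged. Hence the pairing descends to $E_r$.

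\emph{Non-degeneracy: the main obstacle.} My plan is to induct on $r$ using the standard identification $E_{r+1}=H(E_r,d_r)$. For $r=1$ this is classical Serre duality for Dolbeault cohomology, proved via the Hodge star and $\Delta''$-harmonic forms. For the inductive step, I first check that $d_r$ is adjoint to $\pm d_r$ under the pairing, i.e.
\begin{eqnarray*}\langle d_r a,\,b\rangle=\pm\langle a,\,d_r b\rangle.\end{eqnarray*}
This follows from Stokes' theorem applied to the zig-zag representatives, since $d_r\{\alpha\}=\{\partial u_{r-1}\}$. Then I use a finite-dimensional linear-algebra fact: if $V\times W\to\C$ is a perfect pairing of finite-dimensional complexes under which the differentials are adjoint, then it induces a perfect pairing on cohomology. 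This is true because $\ker$ is the annihilator of $\mbox{Im}$ of the adjoint map. It then remains to see that the pairing induced on $H(E_r,d_r)$ coincides with the integration pairing on $E_{r+1}$. This holds because the representatives correspond under the isomorphism $E_{r+1}\simeq H(E_r,d_r)$, which is induced by the identity on forms.

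An alternative to the induction is to use the $E_r$-Laplacians of [PSU20a]: Hodge-type isomorphisms $E_r^{p,\,q}\simeq\mathcal{H}_r^{p,\,q}$, together with the fact that $\star$ maps $\mathcal{H}_r^{p,\,q}$ conjugate-linearly onto $\mathcal{H}_r^{n-p,\,n-q}$. The pairing $\int\alpha\wedge\star\bar\alpha=\|\alpha\|^2$ would then give non-degeneracy directly. I would pursue the inductive route first. The delicate point is keeping track of signs and of the compatibility with the isomorphism $E_{r+1}\simeq H(E_r,d_r)$.
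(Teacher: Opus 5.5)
The paper does not prove this statement; it quotes it from [PSU20b, Theorem 1.4]. So there is no in-paper argument to compare with, and your proposal has to stand on its own. It does, after one correction.

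\textbf{The displayed chain for $E_r$-exactness is reversed.} In your description of $E_r$-exact forms the roles of $\partial$ and $\bar\partial$ are swapped. As written, $\bar\partial\zeta_{r-2}=\partial\zeta_{r-1}$ makes $\partial\zeta_{r-1}$ itself $\bar\partial$-exact. So for $r\geq 2$ your class of ``$E_r$-exact'' forms collapses to $\mbox{Im}\,\bar\partial$. For $r=2$, for instance, it omits the forms $\partial\zeta+\bar\partial\xi$ with $\bar\partial\zeta=0$, which are not $\bar\partial$-exact in general. Checking well-definedness only against that smaller class would not suffice. For $\beta$ of type $(n-p,\,n-q)$ the correct chain is
\begin{eqnarray*}\partial\zeta_0=0,\hspace{3ex}\bar\partial\zeta_1=\partial\zeta_0,\hspace{3ex}\bar\partial\zeta_2=\partial\zeta_1,\hspace{3ex}\dots,\hspace{3ex}\bar\partial\zeta_{r-1}=\partial\zeta_{r-2},\end{eqnarray*}
with $\zeta_{r-1-k}$ of type $(n-p-1-k,\,n-q+k)$. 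For $r\geq 2$ this is equivalent to requiring $\bar\partial\zeta_1=0$, which matches the paper's description of $E_r$-exactness in bidegree $(n,\,n-1)$.

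Your Stokes computation in fact uses these corrected relations: you write $\bar\partial\zeta_{r-1}=\partial\zeta_{r-2}$. With them it closes exactly. After $k$ transfers you reach $\pm\int_X u_k\wedge\bar\partial\zeta_{r-k}$, and for $k=r-1$ this is $\pm\int_X u_{r-1}\wedge\partial\zeta_0=0$. The alternative ending $\int_X u_{r-1}\wedge\bar\partial\zeta_0$ never arises. The $r-1$ forms $u_j$ are matched exactly by the $r-1$ forms $\zeta_{r-1},\dots,\zeta_1$.

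\textbf{Non-degeneracy.} This part is correct as planned.
\begin{itemize}
\item The adjunction is the same back-and-forth: $\int_X\partial u_{r-1}\wedge\beta=\pm\int_X u_{r-1}\wedge\bar\partial v_1=\pm\int_X\partial u_{r-2}\wedge v_1=\cdots=\pm\int_X\alpha\wedge\partial v_{r-1}$, where $(v_j)$ is the zig-zag of $\beta$.
\item The annihilator argument needs finite-dimensionality of $E_r$, which holds since it is a subquotient of $E_1$.
\item It also needs perfectness at $(p,q)$ and at the neighbouring bidegrees $(p\pm r,\,q\mp(r-1))$, all supplied by the induction hypothesis.
\item Compatibility with $E_{r+1}\simeq H(E_r,d_r)$ is immediate, because that isomorphism is induced by the identity on forms.
\end{itemize}

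\textbf{Comparison with the harmonic alternative.} The other route you mention uses $E_r$-harmonic representatives and the map $\alpha\mapsto\star\bar\alpha$.
\begin{itemize}
\item Your induction uses analysis only at $r=1$ and needs no metric.
\item It also proves more: each page $(E_r,d_r)$ is self-dual as a complex. The argument applies verbatim to any bounded double complex with finite-dimensional $E_1$ and a pairing that is perfect on $E_1$.
\item The harmonic route must verify that $\alpha\mapsto\star\bar\alpha$ commutes with the higher-page Laplacians, which involve harmonic projections.
\item In exchange, it gives an explicit dual partner for every class via $\int_X\alpha\wedge\star\bar\alpha=\|\alpha\|^2$, and hence a metric-dependent conjugate-linear isomorphism $E_r^{p,q}(X)\to E_r^{n-p,n-q}(X)$.
\end{itemize}
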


\subsection{The $E_r$-Jacobian torus of a page-(r-1)-$\partial\bar\partial$-manifold}\label{subsection:E_r-Jacobi-torus_page_r-1}

Now, recall that for every compact complex manifold $X$, the image of the canonical map $H^1(X,\,\Z)\longrightarrow H^1(X,\,\R)$ is a lattice. Hence, the image of the canonical map $J_1:H^1(X,\,\Z)\longrightarrow H^1_{dR}(X,\,\R)$ is a lattice. On the other hand, if $X$ is {\it page-(r-1)-$\partial\bar\partial$}, it follows from the isomorphisms (\ref{eqn:E_r-Hodge-decomp}) and (\ref{eqn:E_r-Hodge-sym}) in degree $k=1$ that the composition \begin{eqnarray*}p_r^{0,\,1}\circ S_r^{(1)}\circ j_1 : H^1_{dR}(X,\,\R)\longrightarrow E_r^{0,\,1}(X)\end{eqnarray*} is a canonical {\it $\R$-isomorphism}, where in \begin{eqnarray*}H^1_{dR}(X,\,\R)\xhookrightarrow{j_1}H^1_{dR}(X,\,\C)\xrightarrow[\simeq]{S_r^{(1)}}E_r^{1,\,0}(X)\oplus E_r^{0,\,1}(X)\xtwoheadrightarrow{p_r^{0,\,1}}E_r^{0,\,1}(X)\end{eqnarray*} $j_1$ is the canonical injection, $S_r^{(1)}$ is the isomorphism inverse to the $E_r$-Hodge decomposition $T_r^{(1)}$, the vector spaces $E_r^{1,\,0}(X)$ and $E_r^{0,\,1}(X)$ are canonically conjugate to each other and $p_r^{0,\,1}$ is the projection onto the second factor.

Putting these two pieces of information together, we get

\begin{Cor}\label{Cor:H1-Z_E_r_0-1_lattice} If $X$ is a compact complex {\bf page-(r-1)-$\partial\bar\partial$-manifold}, the image of the map \begin{eqnarray*}p_r^{0,\,1}\circ S_r^{(1)}\circ j_1\circ J_1 : H^1(X,\,\Z)\longrightarrow E_r^{0,\,1}(X),\end{eqnarray*} that we denote by $\mbox{Im}^{(r)}H^1(X,\,\Z)$, is a {\bf lattice} in $E_r^{0,\,1}(X)$.

\end{Cor}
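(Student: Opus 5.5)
The plan is to combine the two pieces of information recalled just before the statement. First, the image $\Lambda:=\mbox{Im}\,(j_1\circ J_1)\subset H^1_{dR}(X,\,\R)$ is a lattice: this is the classical fact that $H^1(X,\,\Z)\to H^1(X,\,\R)$ has image a lattice, since it factors through $H^1(X,\,\Z)/\mbox{torsion}\otimes\R\simeq H^1(X,\,\R)$, combined with the de Rham isomorphism. Concretely, $\Lambda$ is a discrete subgroup of rank $b_1(X)$ spanning $H^1_{dR}(X,\,\R)$ over $\R$. Second, the composition $\Phi_r:=p_r^{0,\,1}\circ S_r^{(1)}\circ j_1:H^1_{dR}(X,\,\R)\to E_r^{0,\,1}(X)$ is an $\R$-linear isomorphism. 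Since an $\R$-linear isomorphism between finite-dimensional real vector spaces is a homeomorphism, it maps a lattice (a discrete cocompact subgroup, equivalently the $\Z$-span of an $\R$-basis) onto a lattice. Hence $\mbox{Im}^{(r)}H^1(X,\,\Z)=\Phi_r(\Lambda)$ is a lattice in $E_r^{0,\,1}(X)$, viewed as a real vector space of dimension $2e_r^{0,\,1}=b_1$.

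The only step that needs real justification is that $\Phi_r$ is an $\R$-isomorphism; the text asserts it, but I would verify it as follows. $\R$-linearity is clear, since all the maps involved are $\C$-linear and $j_1$ is $\R$-linear. For injectivity, let $\{\alpha\}_{dR}$ be real and write $S_r^{(1)}\{\alpha\}=(\{\alpha^{1,\,0}\}_{E_r},\{\alpha^{0,\,1}\}_{E_r})$ with $d$-closed pure-type representatives, so that $\{\alpha\}_{dR}=\{\alpha^{1,\,0}+\alpha^{0,\,1}\}_{dR}$. Conjugating and using reality of the class gives $\{\alpha\}_{dR}=\{\overline{\alpha^{0,\,1}}+\overline{\alpha^{1,\,0}}\}_{dR}$. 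Here $\overline{\alpha^{0,\,1}}$ is a $d$-closed $(1,0)$-form, and the well-definedness of the $E_r$-Hodge symmetry (\ref{eqn:E_r-Hodge-sym}) makes its $E_r$-class depend only on $\{\alpha^{0,\,1}\}_{E_r}$. By injectivity of $T_r^{(1)}$, this yields $\{\alpha^{1,\,0}\}_{E_r}=\{\overline{\alpha^{0,\,1}}\}_{E_r}$. Hence, if $\Phi_r\{\alpha\}=0$, then both components vanish, and so $\{\alpha\}_{dR}=0$.

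Finally, compare dimensions. We have $\dim_\R H^1_{dR}(X,\,\R)=b_1$, while $\dim_\R E_r^{0,\,1}=2e_r^{0,\,1}$. By $T_r^{(1)}$ and Hodge symmetry, $b_1=e_r^{1,\,0}+e_r^{0,\,1}=2e_r^{0,\,1}$. So the injective $\R$-linear map $\Phi_r$ is bijective. I expect this verification, namely the use of well-definedness of the conjugation map to compare the two components, to be the only delicate point; the rest is the formal transport of a lattice under a linear isomorphism.
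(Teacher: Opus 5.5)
Your proof is correct and follows the paper's argument: the paper also combines the fact that the image of $J_1$ is a lattice in $H^1_{dR}(X,\,\R)$ with the fact that $p_r^{0,\,1}\circ S_r^{(1)}\circ j_1$ is an $\R$-linear isomorphism (from the $E_r$-Hodge decomposition and $E_r$-Hodge symmetry in degree $1$), and your injectivity-plus-dimension check spells out that second fact, which the paper only asserts. One cosmetic fix: set $\Lambda:=\mbox{Im}\,J_1\subset H^1_{dR}(X,\,\R)$ rather than $\mbox{Im}\,(j_1\circ J_1)$, since $j_1\circ J_1$ lands in $H^1_{dR}(X,\,\C)$ and $j_1$ is already part of your $\Phi_r$.
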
   

This enables us to introduce the following

\begin{Def}\label{Def:E_r-Jacobi-torus} Let $r\in\N^\star$ and let $X$ be a compact complex {\bf page-(r-1)-$\partial\bar\partial$-manifold}. Set $g_r:=\mbox{dim}_\C E_r^{0,\,1}(X)$. The $g_r$-dimensional complex torus \begin{eqnarray*}Jac_r(X):= E_r^{0,\,1}(X)\Big/\mbox{Im}^{(r)}H^1(X,\,\Z)\end{eqnarray*} is called the {\bf $E_r$-Jacobian torus} of $X$.

\end{Def}

\vspace{2ex}

Note that, when $r=1$ and $X$ is a {\bf $\partial\bar\partial$-manifold}, the $E_r$-Jacobian torus of $X$ defined above coincides with the usual {\bf Jacobian torus} defined Hodge-theoretically (see [Dem97, VI, $\S 9.1$]) by \begin{eqnarray}\label{eqn:Jacobi-torus_standard-dd-bar}Jac(X):= H^{0,\,1}(X)\Big/\mbox{Im}\, H^1(X,\,\Z).\end{eqnarray}

Note also that our $E_r$-Jacobian torus might be ``smaller'' than the classical one. Indeed, thanks to all $d_r$ maps on the $r$-th page of the Fr\"olicher spectral sequence being of bidegree $(r,\,-r+1)$ for every $r\geq 1$, we have the following complex on the $(r-1)$-st page when $r\geq 2$: \begin{eqnarray*}\{0\} = E_{r-1}^{-r+1,\,r-1}(X)\xrightarrow{d_{r-1}^{-r+1,\,r-1}} E_{r-1}^{0,\,1}(X)\xrightarrow{d_{r-1}^{0,\,1}} E_{r-1}^{r-1,\,-r+3}(X)\longrightarrow\cdots\end{eqnarray*} This yields the inclusion: \begin{eqnarray*}E_r^{0,\,1}(X) = \ker d_{r-1}^{0,\,1}\subset E_{r-1}^{0,\,1}(X),  \hspace{5ex} r\geq 2.\end{eqnarray*}

\subsection{The $E_r$-Albanese torus of a page-(r-1)-$\partial\bar\partial$-manifold}\label{subsection:E_r-Albanese-torus_page_r-1}

Similarly, in degree $2n-1$, for any compact complex $n$-dimensional manifold $X$, the image of the canonical map $J_{2n-1}:H^{2n-1}(X,\,\Z)\longrightarrow H^{2n-1}_{dR}(X,\,\R)$ is a lattice. When $X$ is {\it page-(r-1)-$\partial\bar\partial$}, the isomorphisms (\ref{eqn:E_r-Hodge-decomp}) and (\ref{eqn:E_r-Hodge-sym}) in degree $k=2n-1$ imply that the composition \begin{eqnarray*}p_r^{n-1,\,n}\circ S_r^{(2n-1)}\circ j_{2n-1} : H^{2n-1}_{dR}(X,\,\R)\longrightarrow E_r^{n-1,\,n}(X)\end{eqnarray*} is a canonical {\it $\R$-isomorphism}, where in \begin{eqnarray*}H^{2n-1}_{dR}(X,\,\R)\xhookrightarrow{j_{2n-1}}H^{2n-1}_{dR}(X,\,\C)\xrightarrow[\simeq]{S_r^{(2n-1)}}E_r^{n,\,n-1}(X)\oplus E_r^{n-1,\,n}(X)\xtwoheadrightarrow{p_r^{n-1,\,n}}E_r^{n-1,\,n}(X)\end{eqnarray*} $j_{2n-1}$ is the canonical injection, $S_r^{(2n-1)}$ is the isomorphism inverse to the $E_r$-Hodge decomposition $T_r^{(2n-1)}$, the vector spaces $E_r^{n,\,n-1}(X)$ and $E_r^{n-1,\,n}(X)$ are canonically conjugate to each other and $p_r^{n-1,\,n}$ is the projection onto the second factor.

From this we get, as in degree $1$, the following

\begin{Cor}\label{Cor:H_2n-1-Z_E_r_n-1-n_lattice} If $X$ is a compact complex $n$-dimensional {\bf page-(r-1)-$\partial\bar\partial$-manifold}, the image of the map \begin{eqnarray*}p_r^{n-1,\,n}\circ S_r^{(2n-1)}\circ j_{2n-1}\circ J_{2n-1} : H^{2n-1}(X,\,\Z)\longrightarrow E_r^{n-1,\,n}(X),\end{eqnarray*} that we denote by $\mbox{Im}^{(r)}H^{2n-1}(X,\,\Z)$, is a {\bf lattice} in $E_r^{n-1,\,n}(X)$.

\end{Cor}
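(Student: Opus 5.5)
The plan mirrors the degree-$1$ case (Corollary \ref{Cor:H1-Z_E_r_0-1_lattice}). It rests on two facts: the image of $J_{2n-1}$ is a lattice, and the composition $\Phi:=p_r^{n-1,\,n}\circ S_r^{(2n-1)}\circ j_{2n-1}$ is an $\R$-isomorphism. Since an $\R$-linear isomorphism of finite-dimensional real vector spaces maps lattices to lattices, the image $\Phi(\mbox{Im}\,J_{2n-1})=\mbox{Im}^{(r)}H^{2n-1}(X,\,\Z)$ will be a lattice in $E_r^{n-1,\,n}(X)$, viewed as a real vector space.

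The first fact is standard. $H^{2n-1}(X,\,\Z)$ is finitely generated because $X$ is compact. Its image in $H^{2n-1}(X,\,\R)\simeq H^{2n-1}(X,\,\Z)\otimes\R$ is the image of the free part, and a $\Z$-basis of the free part becomes an $\R$-basis. De Rham's theorem then identifies $H^{2n-1}(X,\,\R)$ with $H^{2n-1}_{dR}(X,\,\R)$.

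The substance lies in the second fact, which I would verify directly.

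\textbf{Dimensions.} By the $E_r$-Hodge decomposition (\ref{eqn:E_r-Hodge-decomp}) with $k=2n-1$, and since $E_r^{p,\,q}(X)=0$ whenever $p>n$ or $q>n$, we get
\[b_{2n-1}=e_r^{n,\,n-1}+e_r^{n-1,\,n}.\]
The $E_r$-Hodge symmetry (\ref{eqn:E_r-Hodge-sym}) gives $e_r^{n,\,n-1}=e_r^{n-1,\,n}$. Hence $\dim_\R H^{2n-1}_{dR}(X,\,\R)=b_{2n-1}=2e_r^{n-1,\,n}=\dim_\R E_r^{n-1,\,n}(X)$, so it suffices to prove that $\Phi$ is injective.

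\textbf{Injectivity.} Take a real class $\mathfrak{a}\in H^{2n-1}_{dR}(X,\,\R)$ and write
\[S_r^{(2n-1)}(\mathfrak{a})=(\{\alpha^{n,\,n-1}\}_{E_r},\,\{\alpha^{n-1,\,n}\}_{E_r})\]
with $d$-closed pure-type representatives. Then $\mathfrak{a}=\{\alpha^{n,\,n-1}+\alpha^{n-1,\,n}\}_{dR}$.

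Since $\mathfrak{a}=\bar{\mathfrak{a}}$, we also have $\mathfrak{a}=\{\overline{\alpha^{n-1,\,n}}+\overline{\alpha^{n,\,n-1}}\}_{dR}$. Here $\overline{\alpha^{n-1,\,n}}$ is a $d$-closed form of type $(n,\,n-1)$, and $\overline{\alpha^{n,\,n-1}}$ is a $d$-closed form of type $(n-1,\,n)$.

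By the well-definedness and injectivity of $T_r^{(2n-1)}$, this forces
\[\{\alpha^{n,\,n-1}\}_{E_r}=\{\overline{\alpha^{n-1,\,n}}\}_{E_r}.\]
This is exactly the step where well-definedness is needed: any $d$-closed pure-type representatives may be used, and a different choice yields the same class.

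Now suppose $\Phi(\mathfrak{a})=0$, that is, $\{\alpha^{n-1,\,n}\}_{E_r}=0$. The well-defined conjugation map (\ref{eqn:E_r-Hodge-sym}) sends this zero class to $\overline{\{\overline{\alpha^{n-1,\,n}}\}_{E_r}}$, which must therefore vanish. Hence $\{\alpha^{n,\,n-1}\}_{E_r}=0$ as well, so $S_r^{(2n-1)}(\mathfrak{a})=0$ and consequently $\mathfrak{a}=0$.

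\textbf{Main obstacle.} The delicate point is this reality/conjugation argument. It requires that conjugation be well defined at the $E_r$ level, independent of the choice of $d$-closed representatives. This is provided by Corollary 2.11 of [PSU20a], recalled as (\ref{eqn:E_r-Hodge-sym}), so the obstacle is removed by invoking that result.
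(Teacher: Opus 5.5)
Your proof is correct and follows the paper's route: the image of $J_{2n-1}$ is a lattice, and $p_r^{n-1,\,n}\circ S_r^{(2n-1)}\circ j_{2n-1}$ is an $\R$-isomorphism by the $E_r$-Hodge decomposition and $E_r$-Hodge symmetry in degree $2n-1$, so the image is a lattice. The paper only asserts the $\R$-isomorphism, and your dimension count together with the reality/conjugation argument for injectivity is a valid way to fill in that step.
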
   

This enables us to introduce the following

\begin{Def}\label{Def:E_r-Albanese-torus} Let $r\in\N^\star$ and let $X$ be a compact complex {\bf page-(r-1)-$\partial\bar\partial$-manifold} with $\mbox{dim}_\C X =n$. Set $g_r:=\mbox{dim}_\C E_r^{0,\,1}(X)$. The $g_r$-dimensional complex torus \begin{eqnarray*}Alb_r(X):= E_r^{n-1,\,n}(X)\Big/\mbox{Im}^{(r)}H^{2n-1}(X,\,\Z)\end{eqnarray*} is called the {\bf $E_r$-Albanese torus} of $X$.

\end{Def}

\vspace{2ex}

Note that, when $r=1$ and $X$ is a {\bf $\partial\bar\partial$-manifold}, the $E_r$-Albanese torus of $X$ defined above coincides with the usual {\bf Albanese torus} defined Hodge-theoretically (see [Dem97, VI, $\S 9.2$]) by \begin{eqnarray}\label{eqn:Albanese-torus_standard-dd-bar}Alb(X):= H^{n-1,\,n}(X)\Big/\mbox{Im}\, H^{2n-1}(X,\,\Z).\end{eqnarray} Recall that the Albanese torus of an arbitrary compact complex manifold $X$ was defined by Blanchard [Bla58] (see also [Uen75, chapter IV, $\S.9$]), but it can only be given the Hodge-theoretical description (\ref{eqn:Albanese-torus_standard-dd-bar}) when $X$ is a $\partial\bar\partial$-manifold.

One of the goals of this paper is to extend the Hodge-theoretical descriptions of the Jacobian and Albanese tori, possibly by shrinking Blanchard's classical one, to broader classes of compact complex manifolds, defined by weaker conditions than the $\partial\bar\partial$-condition and by considering higher pages of the Fr\"olicher spectral sequence of $X$.

\begin{Rem}\label{Rem:g_r_tori-duality} (i)\, For any compact complex $n$-dimensional manifold $X$, the duality statement in Theorem \ref{The:duality_E_r} for $(p,\,q)=(1,\,0)$ and $(p,\,q)=(0,\,1)$ yields {\bf canonical isomorphisms}: \begin{eqnarray}\label{eqn:duality_1-0_0-1}E_r^{n-1,\,n}(X)\simeq E_r^{1,\,0}(X)^\star \hspace{5ex} \mbox{and} \hspace{5ex}  E_r^{0,\,1}(X) \simeq E_r^{n,\,n-1}(X)^\star.\end{eqnarray}

  (ii)\, If, morever, $X$ is a {\bf page-(r-1)-$\partial\bar\partial$-manifold}, the $E_r$-Hodge symmetry property (\ref{eqn:E_r-Hodge-sym}) for $(p,\,q)=(n-1,\,n)$, together with (i), yields a {\bf canonical isomorphism}: \begin{eqnarray}\label{eqn:duality-symmetry_conjugation_0-1}E_r^{n-1,\,n}(X)\simeq\bigg(\overline{E_r^{0,\,1}(X)}\bigg)^\star\end{eqnarray} and the following {\bf canonical sesquilinear duality} of $\C$-vector spaces: \begin{eqnarray*} E_r^{0,\,1}(X)\times E_r^{n-1,\,n}(X)\longrightarrow\C, \hspace{3ex} (\{\alpha\}_{E_r},\,\{\beta\}_{E_r})\mapsto \int\limits_X\alpha\wedge\overline\beta.\end{eqnarray*}

    Coupled with the Poincar\'e duality between $H^1(X,\,\Z)$ and $H^{2n-1}(X,\,\Z)$, this enables us to say that the $E_r$-Jacobian torus $Jac_r(X)$ and the $E_r$-Albanese torus $Alb_r(X)$ are naturally sesquilinearly dual to each other.

    Furthermore, combining this with the $E_r$-Hodge decomposition property (\ref{eqn:E_r-Hodge-decomp}) of $X$, we get:  \begin{eqnarray*}g_r = g_r(X):= \mbox{dim}_\C E_r^{0,\,1}(X) = \mbox{dim}_\C E_r^{n-1,\,n}(X) = \frac{1}{2}\,b_1(X),\end{eqnarray*} where $b_1 = b_1(X)$ is the first Betti number of $X$.

\vspace{1ex}    

(iii)\, Recall that, for any compact complex $n$-dimensional manifold $X$, the Poincar\'e duality gives the {\bf canonical isomorphism} \begin{eqnarray}\label{eqn:Poincare-duality_deg-1_Z}H_1(X,\,\Z)\longrightarrow H^{2n-1}(X,\,\Z), \hspace{5ex} [\xi]\longmapsto\{I_\xi\}_{dR},\end{eqnarray} where for every loop $\xi$ in $X$, $I_\xi:=[\xi]$ stands for the current of integration on $\xi$.

Thus, thanks to (\ref{eqn:duality-symmetry_conjugation_0-1}) and (\ref{eqn:Poincare-duality_deg-1_Z}), when $X$ is a {\bf page-(r-1)-$\partial\bar\partial$-manifold} we get the following alternative description of the $E_r$-Albanese torus of $X$: \begin{eqnarray}\label{eqn:E_r-Albanese_alternative}Alb_r(X)=\bigg(\overline{E_r^{0,\,1}(X)}\bigg)^\star\Big/\mbox{Im}^{(r)}H_1(X,\,\Z),\end{eqnarray} where $\mbox{Im}^{(r)}H_1(X,\,\Z)$ is the image of the first homology group $H_1(X,\,\Z)$ under the canonical map: \begin{eqnarray}\label{eqn:map_H_1_E_r_0-1_bar_star}H_1(X,\,\Z)\longrightarrow\bigg(\overline{E_r^{0,\,1}(X)}\bigg)^\star, \hspace{5ex} [\xi]\longmapsto\widetilde{I}_\xi:=\bigg(\overline{E_r^{0,\,1}(X)}\ni\overline{\{\alpha^{0,\,1}\}_{E_r}}\longmapsto\int_\xi\{\overline{\alpha^{0,\,1}}\}_{dR}\in\C\bigg),\end{eqnarray} defined using $(0,\,1)$-type representatives $\alpha^{0,\,1}$ of classes $\mathfrak{c}\in E_r^{0,\,1}(X)$ such that $d\alpha^{0,\,1} = 0$. Such representatives exist thanks to the page-(r-1)-$\partial\bar\partial$-hypothesis on $X$ (see (i) of Definition \ref{Def:page-1-dd-bar}) and the map (\ref{eqn:map_H_1_E_r_0-1_bar_star}) is independent of the choice of $d$-closed representative of type $(0,\,1)$ of each class in $ E_r^{0,\,1}(X)$.

Indeed, the map $T_r^{(1)}$ is independent of these choices. Furthermore, for each $d$-closed representative $\alpha^{0,\,1}$ of a class in $ E_r^{0,\,1}(X)$, we get $d\overline{\alpha^{0,\,1}} = 0$, by conjugation, so $\overline{\alpha^{0,\,1}}$ represents a de Rham class $\{\overline{\alpha^{0,\,1}}\}_{dR}\in H^1_{dR}(X,\,\C)$. Finally, the last integral in (\ref{eqn:map_H_1_E_r_0-1_bar_star}) can be written as \begin{eqnarray*}\int_\xi\{\overline{\alpha^{0,\,1}}\}_{dR} = \int\limits_X I_\xi\wedge\overline{\alpha^{0,\,1}}.\end{eqnarray*} Hence, it is well defined (in the sense that it is independent of the choice of representative $\overline{\alpha^{0,\,1}}$ of the de Rham class $\{\overline{\alpha^{0,\,1}}\}_{dR}$ -- because, thanks to $\xi$ being a loop,  the current of integration $I_\xi$ on it is $d$-closed) and depends only on the homotopy class $[\xi]\in H_1(X,\,\Z)$ of the loop $\xi$.

\end{Rem}

\subsection{The $E_r$-Albanese map of a page-(r-1)-$\partial\bar\partial$-manifold}\label{subsection:E_r-Albanese-map_page_r-1} Using the description (\ref{eqn:E_r-Albanese_alternative}) of the $E_r$-Albanese torus, we make the following

\begin{Def}\label{Def:E_r-Albanese-map} Let $r\in\N^\star$ and let $X$ be a compact complex {\bf page-(r-1)-$\partial\bar\partial$-manifold} with $\mbox{dim}_\C X =n$. Fix a base point $a\in X$. For every point $x\in X$, let $\xi_x$ be a path in $X$ from $a$ to $x$ and let $\widetilde{I}_{\xi_x}\in\bigg(\overline{E_r^{0,\,1}(X)}\bigg)^\star$ be the linear functional defined as in (\ref{eqn:map_H_1_E_r_0-1_bar_star}) by \begin{eqnarray}\label{eqn:map_H_1_E_r_0-1_bar_star_paths}\overline{E_r^{0,\,1}(X)}\ni\overline{\{\alpha^{0,\,1}\}_{E_r}}\xmapsto{\widetilde{I}_{\xi_x}}\int_{\xi_x}\overline{\alpha^{0,\,1}}\in\C,\end{eqnarray} where, for each class $\mathfrak{c}\in E_r^{0,\,1}(X)$, $\alpha^{0,\,1}$ denotes a $d$-closed $(0,\,1)$-type representative of $\mathfrak{c}$. 

 The canonical map: \begin{eqnarray}\label{eqn:E_r-Albanese-map}A_r:X\longrightarrow Alb_r(X)=\bigg(\overline{E_r^{0,\,1}(X)}\bigg)^\star\Big/\mbox{Im}^{(r)}H_1(X,\,\Z), \hspace{5ex} x\longmapsto \widetilde{I}_{\xi_x} \hspace{3ex} \mbox{mod}\hspace{1ex} \mbox{Im}^{(r)}H_1(X,\,\Z),\end{eqnarray} is called the {\bf $E_r$-Albanese map} of $X$.

\end{Def}

\vspace{2ex}

A word of explanation is in order. For a path  $\xi_x$ from $a$ to $x$, the current of integration $I_{\xi_x}$ on $\xi_x$ is not $d$-closed when $x\neq a$. Therefore, the integral \begin{eqnarray*}\int_{\xi_x}\overline{\alpha^{0,\,1}} = \int\limits_X I_{\xi_x}\wedge\overline{\alpha^{0,\,1}}\end{eqnarray*} depends on the choice of path $\xi_x$ from $a$ to $x$ despite the fact that $d\overline{\alpha^{0,\,1}} = 0$. It also depends on the choice of representative $\overline{\alpha^{0,\,1}}$ of the de Rham class $\{\overline{\alpha^{0,\,1}}\}_{dR}$, hence on the choice of $d$-closed $(0,\,1)$-type representative $\alpha^{0,\,1}$ of its class $\{\alpha^{0,\,1}\}_{E_r}\in E_r^{0,\,1}(X)$. Nevertheless, the $E_r$-Albanese map $A_r:X\longrightarrow Alb_r(X)$ is well defined because for any other path $\eta_x$ from $a$ to $x$, the path $\eta_x^{-1}\,\xi_x$ is a loop, hence $\widetilde{I}_{\eta_x^{-1}\,\xi_x}\in\mbox{Im}^{(r)}H_1(X,\,\Z)$. Explicitly, we have: \begin{eqnarray*}\int_{\xi_x}\overline{\alpha^{0,\,1}}- \int_{\eta_x}\overline{\alpha^{0,\,1}} = \int_{\eta_x^{-1}\,\xi_x}\overline{\alpha^{0,\,1}} = \int\limits_X I_{\eta_x^{-1}\,\xi_x}\wedge\overline{\alpha^{0,\,1}}\end{eqnarray*} and the last integral depends only on the homology class $[\eta_x^{-1}\,\xi_x]\in H_1(X,\,\Z)$ of the loop $\eta_x^{-1}\,\xi_x$ and on the de Rham class $\{\overline{\alpha^{0,\,1}}\}_{dR}$.

\vspace{2ex}

We now give a different description of the $E_r$-Albanese map, following again in our generalised setting the presentation of the classical case given in [Dem97, chapter VI - $\S9$].

Recall that, by the Hodge symmetry property (\ref{eqn:E_r-Hodge-sym}) of compact complex page-(r-1)-$\partial\bar\partial$-manifolds $X$, $\overline{E_r^{0,\,1}(X)}$ is canonically isomorphic to $E_r^{1,\,0}(X)$, the isomorphism being induced by complex conjugation of $d$-closed representatives of $E_r$-classes.

On the other hand, for every compact complex manifold $X$, $E_r^{p,\,0}(X)$ can be seen as the quotient of the $\C$-vector space of $E_r$-closed $(p,\,0)$-forms by the $\C$-vector space of $E_r$-exact $(p,\,0)$-forms on $X$ (see terminology in [PSU20b, Proposition 2.3] -- it applies to each $E_r^{p,\,q}(X)$ in any bidegree, in particular to bidegree $(p,\,0)$). For bidegree reasons, for every $r\geq 2$, a $(p,\,0)$-form is $E_r$-closed if and only if it is both $\partial$-closed and $\bar\partial$-closed, or equivalently, $d$-closed. On the other hand, if $r=1$ and $X$ is a $\partial\bar\partial$-manifold in the usual sense, every $\bar\partial$-closed $(p,\,0)$-form is also $\partial$-closed. Thus, for every value of $r\geq 1$, any $(p,\,0)$-type representative of any class in $E_r^{p,\,0}(X)$ on a page-(r-1)-$\partial\bar\partial$-manifold $X$ is $d$-closed. (The page-(r-1)-$\partial\bar\partial$-assumption on $X$ is not necessary when $r\geq 2$.) Meanwhile, in an obvious way, for every $p$, every class in $H_{\bar\partial}^{p,\,0}(X,\,\C)$ is a singleton. So, the elements of $H_{\bar\partial}^{p,\,0}(X,\,\C)$ identify with the $\bar\partial$-closed ($=$ holomorphic) smooth $(p,\,0)$-forms on $X$.  

\begin{Lem}\label{Lem:E_r_1-0_singleton_ker} Let $X$ be a compact complex manifold with $n:=\mbox{dim}_\C X\geq 2$. Fix an $r\geq 1$. Then:

\vspace{1ex}

(i)\, every class in $E_r^{1,\,0}(X)$ is a singleton;

\vspace{1ex}

(ii)\, $\displaystyle E_r^{1,\,0}(X) = \ker\bigg(d: H_{\bar\partial}^{1,\,0}(X,\,\C)\longrightarrow H_{\bar\partial}^{2,\,0}(X,\,\C)\bigg)\subset H_{\bar\partial}^{1,\,0}(X,\,\C)$ if $r\geq 2$.
  
\end{Lem}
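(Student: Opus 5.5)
The plan is to use the explicit description of $E_r^{p,\,q}(X)$ as the quotient of the space of $E_r$-closed $(p,\,q)$-forms by the space of $E_r$-exact $(p,\,q)$-forms ([PSU20b, Proposition 2.3]). For (i) it then suffices to show that the only $E_r$-exact $(1,\,0)$-form is $0$. Recall that a form $\alpha$ of type $(p,\,q)$ is $E_r$-exact if
\[
\alpha=\partial\beta+\bar\partial\gamma
\]
for suitable forms $\beta,\gamma$. Here $\beta$ is of type $(p-1,\,q)$ and $\bar\partial$-closed. The form $\gamma$ is of type $(p,\,q-1)$ and starts a zig-zag $\partial\gamma=\bar\partial u_1$, \dots, with $\partial u_{r-2}=\bar\partial u_{r-1}$, of length $r-1$. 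The constraints on $\beta$ are trivial when $r=1$.

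For $(p,\,q)=(1,\,0)$, the form $\gamma$ would have bidegree $(1,\,-1)$, so $\gamma=0$. The form $\beta$ is a $\bar\partial$-closed $(0,\,0)$-form, i.e. a holomorphic function on the compact manifold $X$. Hence $\beta$ is constant and $\partial\beta=0$. Thus the $E_r$-exact $(1,\,0)$-forms reduce to $\{0\}$, and each class in $E_r^{1,\,0}(X)$ consists of a single form. The only point to check is that the definition of $E_r$-exactness in [PSU20b] indeed takes this form in bidegree $(1,\,0)$. Equivalently, one can argue via the spectral sequence: $E_r^{1,\,0}$ is obtained from $E_1^{1,\,0}=H^{1,\,0}_{\bar\partial}(X,\,\C)$ by taking kernels of the $d_j$ leaving bidegree $(1,\,0)$ and quotienting by images of $d_j$ arriving from bidegree $(1-j,\,j-1)$. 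For $j\geq 2$ these source spaces vanish. For $j=1$ the source is $E_1^{0,\,0}=H^{0,\,0}_{\bar\partial}=\C$, on which $d_1=\partial$ acts as zero.

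For (ii) with $r\geq 2$, I would follow the same spectral-sequence bookkeeping. Since no nonzero images arrive at $(1,\,0)$ on any page, $E_r^{1,\,0}$ is the intersection, inside $H^{1,\,0}_{\bar\partial}(X,\,\C)$, of the kernels of $d_1,\dots,d_{r-1}$ leaving $(1,\,0)$. The map $d_1$ is induced by $\partial$, so its kernel consists of the holomorphic $1$-forms $\alpha$ with $\partial\alpha=0$, i.e. $d\alpha=0$. For $j\geq 2$, the map $d_j$ lands in $E_j^{1+j,\,1-j}=0$, since $1-j<0$. Therefore
\[
E_r^{1,\,0}(X)=\ker\big(d:H^{1,\,0}_{\bar\partial}(X,\,\C)\to H^{2,\,0}_{\bar\partial}(X,\,\C)\big).
\]
Here we use that $d=\partial$ on holomorphic $1$-forms and that $H^{2,\,0}_{\bar\partial}$ is the space of holomorphic $2$-forms, classes being singletons. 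The inclusion into $H^{1,\,0}_{\bar\partial}(X,\,\C)$ is then immediate.

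The main obstacle is purely one of identification. We must check that, under the isomorphisms of [PSU20b], the forms/quotient description agrees with this iterated kernel description, in particular that $E_r$-closedness of a $(1,\,0)$-form for $r\geq 2$ amounts exactly to $\bar\partial\alpha=0$ and $\partial\alpha=0$. This matches the remark preceding the lemma, which I would invoke directly.
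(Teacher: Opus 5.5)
Your proposal is correct. For (ii) it is the paper's argument in spectral-sequence language. The paper notes that for $r\geq 2$ a $(1,0)$-form is $E_r$-closed iff $d\alpha=0$, because the form $u_1$ in the closedness tower would have bidegree $(2,-1)$. That is your observation that $\ker d_1$ consists of the $\partial$-closed holomorphic $1$-forms, while every $d_j$ with $j\geq 2$ lands in negative bidegree.

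For (i) the mechanism differs. The paper keeps from $E_r$-exactness only that $\alpha^{1,0}=\partial f$ for some smooth function $f$, and explicitly discards the extra constraint on $f$. It then uses that $\alpha^{1,0}$ is a representative, so $\bar\partial\alpha^{1,0}=0$, i.e. $\partial\bar\partial f=0$, and compactness forces $f$ to be constant. You instead use the constraint on the potential itself: $f$ is holomorphic, equivalently $d_1$ vanishes on $E_1^{0,0}=\C$. You add that the sources $E_j^{1-j,\,j-1}$ vanish for $j\geq 2$. Your bookkeeping handles (i) and (ii) in one sweep and exhibits $E_r^{1,0}(X)$ as an iterated kernel inside $H^{1,0}_{\bar\partial}(X,\C)$. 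The paper's argument is more robust, since it needs nothing about the precise shape of the $E_r$-exactness conditions.

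You should, however, fix your recollection of $E_r$-exactness, which is scrambled. In [PSU20b, Proposition 2.3], recalled in (\ref{eqn:E_r_exact_def_1})--(\ref{eqn:E_r_exact_def_2}), the $\bar\partial$-potential $\gamma$ is unconstrained. The tower sits on the $\partial$-potential $\beta$: $\bar\partial\beta=\partial v_{r-3}$, $\bar\partial v_{r-3}=\partial v_{r-4}$, \dots, $\bar\partial v_0=0$.
\begin{itemize}
\item The condition $\bar\partial\beta=0$ is the right one only for $r=2$.
\item For $r=1$ there is no $\partial$-term at all, since $E_1$-exact means $\bar\partial$-exact. Your remark that the constraints on $\beta$ are trivial when $r=1$ is therefore backwards, and it clashes with your use of $\bar\partial\beta=0$ in the next sentence.
\end{itemize}
In bidegree $(1,0)$ none of this matters. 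There $\gamma$ has type $(1,-1)$ and $v_{r-3}$ would have type $(-1,1)$, so the tower collapses to $\bar\partial\beta=0$ for every $r\geq 2$. Your spectral-sequence argument is also independent of it.

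Finally, state explicitly that classes in $H^{1,0}_{\bar\partial}(X,\C)$ are singletons, since $\bar\partial$-exact $(1,0)$-forms vanish. This is what turns ``no nonzero $d_j$-image arrives at $(1,0)$'' into statement (i).
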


\noindent {\it Proof.} (i)\, As mentioned above, this is immediate when $r=1$. Suppose therefore that $r\geq 2$. Let $\alpha^{1,\,0}$ be an arbitrary $E_r$-closed $(1,\,0)$-form on $X$. In this bidegree, this means that $d\alpha^{1,\,0}=0$. We have to show that $\alpha^{1,\,0}$ is $E_r$-exact if and only if $\alpha^{1,\,0}=0$. The ``if'' implication is obvious.

Suppose that $\alpha^{1,\,0}$ is $E_r$-exact. In this bidegree, this implies the existence of a $C^\infty$ function $f:X\to\C$ such that $\alpha^{1,\,0} = \partial f$ on $X$ (and a little more if $r\geq 2$, but we will not need the extra information). Taking $\bar\partial$, we get: \begin{eqnarray*}0=\bar\partial\alpha^{1,\,0} = \bar\partial\partial f  \hspace{5ex} \mbox{on}\hspace{1ex} X.\end{eqnarray*} Since $X$ is compact, this implies that $f$ is constant, so  $\alpha^{1,\,0} = \partial f =0$ on $X$.

\vspace{1ex}

(ii)\, As seen above, every class in either $H_{\bar\partial}^{1,\,0}(X,\,\C)$ or $E_r^{1,\,0}(X)$ is a singleton. Moreover, for each $r\geq 2$, the elements of $E_r^{1,\,0}(X)$ identify with the $d$-closed smooth $(1,\,0)$-forms on $X$. 

Therefore, a $C^\infty$ $(1,\,0)$-form $\alpha^{1,\,0}$ such that $\bar\partial\alpha^{1,\,0}=0$ (equivalently, the unique representative of a class in $H_{\bar\partial}^{1,\,0}(X,\,\C)$) lies in the kernel of $d$ if and only if $\alpha^{1,\,0}$ is the (unique) representative of a class in $E_r^{1,\,0}(X)$.  \hfill $\Box$

\vspace{2ex}

We get the following statement that will also be useful in explicit computations of the $E_r$-Albanese map of certain examples of page-(r-1)-$\partial\bar\partial$-manifolds. 

\begin{Prop}\label{Prop:E_r-Albanese-map_hol} Let $r\in\N^\star$ and let $X$ be a compact complex {\bf page-(r-1)-$\partial\bar\partial$-manifold} with $\mbox{dim}_\C X =n$. Let $\{\alpha_1^{1,\,0},\dots , \alpha_{g_r}^{1,\,0}\}$ be a $\C$-basis of $E_r^{1,\,0}(X)$.

  Then, the {\bf $E_r$-Albanese map} of $X$ identifies canonically with the map:  \begin{eqnarray}\label{eqn:E_r-Albanese-map_coordinates}\nonumber A_r:X &\longrightarrow & Alb_r(X)=\bigg(E_r^{1,\,0}(X)\bigg)^\star\Big/\mbox{Im}^{(r)}H_1(X,\,\Z)\simeq\C^{g_r}\Big/\Lambda, \\
    x & \longmapsto & \bigg(\int_{\xi_x}\alpha_1^{1,\,0},\dots , \int_{\xi_x}\alpha_{g_r}^{1,\,0}\bigg)   \hspace{3ex} \mbox{mod}\hspace{1ex} \Lambda,\end{eqnarray} where the lattice $\Lambda\subset\C^{g_r}$ is the group of periods of the $1$-forms $\alpha_1^{1,\,0},\dots , \alpha_{g_r}^{1,\,0}$, namely \begin{eqnarray*}\Lambda:=\bigg\{\bigg(\int_\gamma\alpha_1^{1,\,0},\dots , \int_\gamma\alpha_{g_r}^{1,\,0}\bigg)\,\bigg|\,[\gamma]\in\pi_1(X,\,a)\bigg\},\end{eqnarray*} and $\xi_x$ is any path in $X$ from the base point $a$ to $x$.

  In particular, the {\bf $E_r$-Albanese map} $A_r:X\longrightarrow Alb_r(X)$ is {\bf holomorphic}. 

\end{Prop}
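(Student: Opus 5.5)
We plan to start from the definition (\ref{eqn:E_r-Albanese-map}) of $A_r$ and transport it through the $E_r$-Hodge symmetry isomorphism (\ref{eqn:E_r-Hodge-sym}) for $(p,\,q)=(0,\,1)$. That isomorphism is induced by conjugating $d$-closed representatives, and it identifies $\overline{E_r^{0,\,1}(X)}$ with $E_r^{1,\,0}(X)$ via $\overline{\{\alpha^{0,\,1}\}_{E_r}}\mapsto\{\overline{\alpha^{0,\,1}}\}_{E_r}$. Dualising gives $(\overline{E_r^{0,\,1}(X)})^\star\simeq (E_r^{1,\,0}(X))^\star$. Under it, the functional $\widetilde{I}_{\xi_x}$ of (\ref{eqn:map_H_1_E_r_0-1_bar_star_paths}) becomes the functional $\{\beta^{1,\,0}\}_{E_r}\mapsto\int_{\xi_x}\beta^{1,\,0}$ on $E_r^{1,\,0}(X)$, where $\beta^{1,\,0}:=\overline{\alpha^{0,\,1}}$ is a $d$-closed $(1,\,0)$-form.

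The key point is Lemma \ref{Lem:E_r_1-0_singleton_ker}(i): every class in $E_r^{1,\,0}(X)$ is a singleton. So $\beta^{1,\,0}$ is the \emph{unique} representative of its class, and the discussion preceding the lemma shows it is $d$-closed (for $r\geq2$ by bidegree reasons, for $r=1$ by the $\partial\bar\partial$-property). Hence the line integral $\int_{\xi_x}\beta^{1,\,0}$ depends only on the class and on the path. The choice-of-representative ambiguity noted after Definition \ref{Def:E_r-Albanese-map} disappears.

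Next we fix the basis $\{\alpha_1^{1,\,0},\dots,\alpha_{g_r}^{1,\,0}\}$ of $E_r^{1,\,0}(X)$. Evaluating a functional on this basis gives an isomorphism $(E_r^{1,\,0}(X))^\star\simeq\C^{g_r}$, and $\widetilde{I}_{\xi_x}$ is sent to $(\int_{\xi_x}\alpha_1^{1,\,0},\dots,\int_{\xi_x}\alpha_{g_r}^{1,\,0})$. The same computation for loops $\gamma$ shows that $\mbox{Im}^{(r)}H_1(X,\,\Z)$ is mapped onto the period group $\Lambda$. Here we use that $H_1(X,\,\Z)$ is the abelianisation of $\pi_1(X,\,a)$ (Hurewicz), and that integrals of closed forms over loops factor through homology. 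That $\Lambda$ is a lattice follows from Corollary \ref{Cor:H_2n-1-Z_E_r_n-1-n_lattice}, combined with the identifications of Remark \ref{Rem:g_r_tori-duality}(iii) that produced (\ref{eqn:E_r-Albanese_alternative}). Well-definedness modulo $\Lambda$ is the path-independence argument already given after Definition \ref{Def:E_r-Albanese-map}. This yields (\ref{eqn:E_r-Albanese-map_coordinates}).

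Holomorphicity is a local statement. Near any $x_0$, fix a path $\xi_{x_0}$ and extend it by short paths inside a small ball $U\ni x_0$. Then the $j$-th coordinate equals $c_j+\int_{x_0}^x\alpha_j^{1,\,0}$ for a constant $c_j$. Since $\alpha_j^{1,\,0}$ is $d$-closed on the simply connected ball $U$, the Poincar\'e lemma gives $\alpha_j^{1,\,0}=df_j$ with $f_j$ a function on $U$. Because $\alpha_j^{1,\,0}$ has type $(1,\,0)$, we get $\bar\partial f_j=0$, so $f_j$ is holomorphic. The coordinates of $A_r$ are therefore locally $c_j+f_j(x)-f_j(x_0)$, hence holomorphic. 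The only delicate step is checking that all the identifications (Hodge symmetry, $E_r$-duality, Poincar\'e duality) are compatible and canonical, so that the lattice matches $\Lambda$. I expect this bookkeeping, rather than holomorphicity, to be the main thing to verify.
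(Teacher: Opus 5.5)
Your proposal is correct and follows the paper's route. The paper likewise bases the coordinate description on identifying $\overline{E_r^{0,\,1}(X)}$ with $E_r^{1,\,0}(X)$ via $E_r$-Hodge symmetry, together with the fact that classes in $E_r^{1,\,0}(X)$ are $d$-closed singletons; it then proves holomorphicity exactly as you do, writing $\alpha_j^{1,\,0}=df_j$ with $f_j$ holomorphic on a small simply connected $U$, so that $\int_{\xi_x}\alpha_j^{1,\,0}=C_j+f_j(x)-f_j(x_0)$. You additionally spell out bookkeeping the paper leaves implicit: dualising the symmetry isomorphism, matching $\mbox{Im}^{(r)}H_1(X,\,\Z)$ with $\Lambda$ via Hurewicz, and deriving the lattice property from the corollary on $H^{2n-1}(X,\,\Z)$.
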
  

\noindent {\it Proof.} By the above discussion, $d\alpha_j^{1,\,0} = 0$ for all $j=1,\dots , g_r$. In particular, the $\alpha_j^{1,\,0}$ are holomorphic $1$-forms. Thus locally on $X$, for example on a sufficiently small simply connected open subset $U$ containing $x$, $\alpha_j^{1,\,0} = df_j$ for a {\it holomorphic} function $f_j$. Fix a point $x_0\in U$ and choose the paths $\xi_x$ by taking a fixed path from $a$ to $x_0$ and then a path inside $U$ from $x_0$ to $x$. We get: \begin{eqnarray*}\int_{\xi_x}\alpha_j^{1,\,0} = C_j + f_j(x) - f_j(x_0),  \hspace{5ex} j=1,\dots , g_r.\end{eqnarray*} This shows that the local representative of $A_r$ in $\C^{g_r}/\Lambda$ is holomorphic.   \hfill $\Box$

\vspace{2ex}

\begin{Cor}\label{Cor:A_r_surjective-submersion_c-par-page-ddbar} If $X$ is a compact {\bf complex parallelisable} {\bf page-(r-1)-$\partial\bar\partial$-manifold}, its $E_r$-Albanese map $A_r:X\longrightarrow Alb_r(X)$ is a holomorphic {\bf surjective submersion}.

\end{Cor}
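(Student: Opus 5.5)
Holomorphicity of $A_r$ is already given by Proposition \ref{Prop:E_r-Albanese-map_hol}, so it remains to show that $A_r$ is a submersion at every point; surjectivity will then follow from compactness. In the coordinates (\ref{eqn:E_r-Albanese-map_coordinates}), the local representative of $A_r$ near $x$ is $(f_1,\dots,f_{g_r})$ up to additive constants, with $df_j=\alpha_j^{1,\,0}$. Hence the differential of $A_r$ at $x$ is the $\C$-linear map
\begin{eqnarray*}T^{1,\,0}_xX\ni v\longmapsto\bigg(\alpha_1^{1,\,0}(x)(v),\dots,\alpha_{g_r}^{1,\,0}(x)(v)\bigg)\in\C^{g_r}.\end{eqnarray*}
This map is surjective if and only if the covectors $\alpha_1^{1,\,0}(x),\dots,\alpha_{g_r}^{1,\,0}(x)$ are linearly independent in $T^{1,\,0\,\star}_xX$. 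So the task reduces to showing that a basis of $E_r^{1,\,0}(X)$ stays pointwise linearly independent at every point.

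Here complex parallelisability is used. Since $TX$ is holomorphically trivial, there are global holomorphic $1$-forms $\omega_1,\dots,\omega_n$ forming a basis of $T^{1,\,0\,\star}_xX$ at every $x$. By Lemma \ref{Lem:E_r_1-0_singleton_ker}, $E_r^{1,\,0}(X)$ is a space of $d$-closed holomorphic $1$-forms. Any such form is $\sum_k c_k\omega_k$ with holomorphic, hence constant, coefficients $c_k$, because $X$ is compact. Thus $E_r^{1,\,0}(X)\subset \mbox{Span}_\C\{\omega_1,\dots,\omega_n\}$, and each $\alpha_j^{1,\,0}=\sum_k c_{jk}\omega_k$ with a constant matrix $(c_{jk})$. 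The $\alpha_j^{1,\,0}$ are linearly independent as forms, which is equivalent to the rows of $(c_{jk})$ being independent. Since the $\omega_k(x)$ form a basis at every point, this gives independence of the $\alpha_j^{1,\,0}(x)$ at every $x$. In particular $g_r\le n$, and $dA_r$ has rank $g_r$ everywhere, so $A_r$ is a submersion.

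For surjectivity, a holomorphic submersion is an open map, so $A_r(X)$ is open in $Alb_r(X)$. As $X$ is compact, $A_r(X)$ is also compact, hence closed. The torus $Alb_r(X)$ is connected, so $A_r(X)=Alb_r(X)$.

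The only point needing care is the identification of the differential of $A_r$ with evaluation of the basis forms. This rests on the isomorphism $\overline{E_r^{0,\,1}(X)}\simeq E_r^{1,\,0}(X)$, which lets one take the coordinates to be $\int_{\xi_x}\alpha_j^{1,\,0}$, as Proposition \ref{Prop:E_r-Albanese-map_hol} already does. Beyond that, the argument is the pointwise-independence step, which is straightforward given parallelisability.
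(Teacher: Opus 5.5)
Your proposal is correct and follows essentially the same route as the paper. Like the paper, you expand a basis of $E_r^{1,\,0}(X)$ in the global holomorphic frame $\omega^1,\dots,\omega^n$ with constant coefficients, deduce pointwise linear independence and hence maximal rank of $dA_r$ everywhere, and obtain surjectivity from the open-and-closed argument on the connected torus. (A minor point: for $r=1$ the $d$-closedness of the elements of $E_1^{1,\,0}(X)$ comes from the $\partial\bar\partial$-hypothesis, not from part (ii) of Lemma \ref{Lem:E_r_1-0_singleton_ker}, which is stated only for $r\geq 2$; this does not affect your argument.)
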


\noindent {\it Proof.} By the complex parallelisability hypothesis, the holomorphic tangent bundle $T^{1,\,0}X$, hence also the holomorphic cotangent bundle $\Lambda^{1,\,0}T^\star X$, is (holomorphically) trivial. Therefore, setting $n:=\mbox{dim}_\C X$, there exist $n$ global holomorphic $(1,\,0)$-forms $\omega^1,\dots , \omega^n\in H^{1,\,0}_{\bar\partial}(X,\,\C)$ that are linearly independent at every point $x\in X$. In particular, they form a $\C$-basis of $H^{1,\,0}_{\bar\partial}(X,\,\C)$. (To see this last -- standard -- claim, note that $\omega^1,\dots , \omega^n\in H^{1,\,0}_{\bar\partial}(X,\,\C)$ are necessarily linearly independent in $H^{1,\,0}_{\bar\partial}(X,\,\C)$, so $h^{1,\,0}_{\bar\partial}(X)\geq n$. On the other hand, $H^{1,\,0}_{\bar\partial}(X,\,\C)\simeq H^0(X,\,\Omega^1_X) \simeq H^0(X,\,\C^n)$, the last isomorphism being a consequence of the triviality of the rank-n cotangent sheaf $\Omega^1_X$. Since $X$ is compact, we have $H^0(X,\,\C^n)\simeq\C^n$, so $h^{1,\,0}_{\bar\partial}(X) = n$.) 

We have already seen that $E_r^{1,\,0}(X)$ identifies canonically with the $\C$-vector space of $d$-closed $C^\infty$ $(1,\,0)$-forms on $X$ (see Lemma \ref{Lem:E_r_1-0_singleton_ker}). Note that this holds even for $r=1$ thanks to our page-(r-1)-$\partial\bar\partial$-assumption (which becomes the standard $\partial\bar\partial$-assumption in that case).

Let $\{\alpha_1^{1,\,0},\dots , \alpha_{g_r}^{1,\,0}\}$ be a $\C$-basis of $E_r^{1,\,0}(X)$. Thanks to (ii) of Lemma \ref{Lem:E_r_1-0_singleton_ker}, the forms $\alpha_1^{1,\,0},\dots , \alpha_{g_r}^{1,\,0}$ are also linearly independent in $H_{\bar\partial}^{1,\,0}(X,\,\C) = \C\,\langle\omega^1,\dots , \omega^n\rangle$. Thus, there exist $c_{jl}\in\C$ such that \begin{eqnarray*}\alpha_j^{1,\,0} = \sum\limits_{l=1}^n c_{jl}\,\omega^l,  \hspace{5ex} j=1,\dots , g_r,\end{eqnarray*} and the coefficient vectors $\tilde{c}_j:=(c_{j1},\dots , c_{jn})\in\C^n$, with $j=1,\dots , g_r$, are necessarily linearly independent. Coupled with the fact that, for every $x\in X$, $\{\omega^1(x),\dots , \omega^n(x)\}$ is a $\C$-basis of $\Lambda^{1,\,0}T^\star_xX$, this implies that the coefficient matrix of $\alpha_1^{1,\,0}(x),\dots , \alpha_{g_r}^{1,\,0}(x)$ with respect to this basis has maximal rank (equal to $g_r$). We infer that $\alpha_1^{1,\,0},\dots , \alpha_{g_r}^{1,\,0}$ are pointwise linearly independent.  

Now, recall that $d\alpha_j^{1,\,0}=0$ on $X$, for every $j$. Fix a point $x\in X$ and a small simply connected open subset $U$ of $X$ containing $x$ such that $\alpha_j^{1,\,0} = df_j$ on $U$, for every $j=1,\dots , g_r$. It follows that each function $f_j$ is holomorphic on $U$ and, as we saw in the proof of Proposition \ref{Prop:E_r-Albanese-map_hol}, that we have \begin{eqnarray*}A_r(z) = \bigg(f_1(z) - f_1(x_0) + C_1,\dots , f_{g_r}(z) - f_{g_r}(x_0) + C_{g_r}\bigg) \hspace{3ex} \mbox{mod}\hspace{1ex} \Lambda, \hspace{5ex} z\in U,\end{eqnarray*} where $\Lambda$ is the group of periods of the $1$-forms $\alpha_1^{1,\,0},\dots , \alpha_{g_r}^{1,\,0}$.

Thus, the differential of $A_r$ at every $z\in U$ is \begin{eqnarray*}(dA_r)(z) = (\partial A_r)(z) = \bigg(\frac{\partial f_j}{\partial z_k}(z)\bigg)_{\substack{1\leq j\leq g_r\\1\leq k\leq n}},\end{eqnarray*} which is a matrix whose rows are the vectors $\alpha_1^{1,\,0},\dots , \alpha_{g_r}^{1,\,0}$. Furthermore, we have seen above that this matrix is of maximal rank at every point $z\in U$. We conclude that the $E_r$-Albanese map $A_r$ is submersive at every point $z\in X$.

It remains to show that $A_r$ is surjective. Since it is a submersion, $A_r:X\longrightarrow Alb_r(X)$ is an {\it open} map. On the other hand, $X$ is compact and $Alb_r(X)$ is Hausdorff, so $A_r$ is also a {\it closed} map. In particular, $A_r(X)$ is both open and closed in the complex torus $Alb_r(X)$. By connectedness of $Alb_r(X)$, we must have $A_r(X) = Alb_r(X)$, which proves that $A_r$ is surjective. \hfill $\Box$

\vspace{2ex}

Recall that the {\it algebraic dimension} $a(X)$ of a compact complex manifold $X$ is defined to be the largest number of algebraically independent meromorphic functions on $X$. It is well known that it never exceeds the dimension of $X$ as a complex manifold: $a(X)\leq n:=\mbox{dim}_\C X$. Moreover, it is maximal (i.e. $a(X)= n$) if and only if $X$ is {\it Moishezon} (i.e. $X$ is bimeromorphically equivalent to a projective manifold). Furthermore, it is known that $X$ is Moishezon if and only if there exists a {\it big} holomorphic line bundle $L$ on $X$. 

\begin{Cor}\label{Cor:A_r_c-par-page-ddbar_alg-dim_ineq} Let $X$ be a compact {\bf complex parallelisable} {\bf page-(r-1)-$\partial\bar\partial$-manifold} and let $A_r:X\longrightarrow Alb_r(X)$ be its $E_r$-Albanese map. Then, the algebraic dimensions compare as: \begin{eqnarray*}a\bigg(Alb_r(X)\bigg)\leq a(X).\end{eqnarray*}

\end{Cor}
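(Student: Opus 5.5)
We prove the inequality by pulling back meromorphic functions along the $E_r$-Albanese map. By Corollary \ref{Cor:A_r_surjective-submersion_c-par-page-ddbar}, $A_r:X\longrightarrow Alb_r(X)$ is a holomorphic surjective submersion. Set $T:=Alb_r(X)$ and $k:=a(T)$, and choose algebraically independent meromorphic functions $f_1,\dots , f_k$ on $T$. We aim to show that $A_r^\star f_1,\dots , A_r^\star f_k$ are meromorphic functions on $X$ that are still algebraically independent. This gives $a(X)\geq k$.

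First, we check that the pullbacks are well defined and meromorphic. Locally on $T$, write $f_j=g_j/h_j$ with $g_j,h_j$ holomorphic and $h_j\not\equiv 0$ on the connected local chart. Then $A_r^\star f_j=(g_j\circ A_r)/(h_j\circ A_r)$ locally on $X$. The only issue is that $h_j\circ A_r$ must not vanish identically on any open subset of $X$. Since $A_r$ is a submersion, it is an open map. So if $h_j\circ A_r\equiv 0$ on some open $V\subset X$, then $h_j\equiv 0$ on the nonempty open set $A_r(V)$, hence on the whole connected chart by the identity theorem, which is a contradiction. Equivalently, the preimage under $A_r$ of the polar/indeterminacy analytic set of $f_j$, which is nowhere dense, is a proper analytic subset of $X$, and therefore nowhere dense because a submersion pulls back nowhere dense analytic sets to nowhere dense ones.

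Next, we show algebraic independence. Suppose $P\in\C[t_1,\dots , t_k]$ satisfies $P(A_r^\star f_1,\dots , A_r^\star f_k)=0$ as a meromorphic function on $X$. This says $A_r^\star\big(P(f_1,\dots , f_k)\big)=0$. Let $F:=P(f_1,\dots , f_k)$, a meromorphic function on $T$. On the dense open set where $F$ is holomorphic, we have $F\circ A_r=0$. Since $A_r$ is surjective and open, the image of the corresponding dense open subset of $X$ is a nonempty open subset of $T$ on which $F$ vanishes. By connectedness of $T$ and the identity theorem, $F\equiv 0$, so $P=0$ by the algebraic independence of the $f_j$. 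Alternatively, one can phrase this as injectivity of the field map $A_r^\star:\mathcal{M}(T)\hookrightarrow\mathcal{M}(X)$, which preserves transcendence degree.

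The main (mild) obstacle is the first step: making sure that pullback of meromorphic functions is legitimate, i.e. that $A_r(X)$ is not contained in the polar divisor of any $f_j$. Surjectivity of $A_r$ alone already rules this out, since a proper analytic subset cannot contain $A_r(X)=T$; openness of the submersion then handles the local nondegeneracy. Everything else is formal once Corollary \ref{Cor:A_r_surjective-submersion_c-par-page-ddbar} is invoked.
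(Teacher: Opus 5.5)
Your proposal is correct and follows essentially the same route as the paper: pull back algebraically independent meromorphic functions along the surjective holomorphic map $A_r$, then use surjectivity to transfer any algebraic relation among the pullbacks back to a relation on $Alb_r(X)$. You justify the pullback of meromorphic functions explicitly, which the paper takes for granted, and you use the openness of the submersion to do so. As the paper remarks, holomorphicity and surjectivity alone suffice, since $X$ is connected and so a proper analytic preimage is nowhere dense.
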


\noindent {\it Proof.} By Corollary \ref{Cor:A_r_surjective-submersion_c-par-page-ddbar}, the map $A_r:X\longrightarrow Alb_r(X)$ is a holomorphic surjective submersion. Thus, if $f$ is a meromorphic function on $Alb_r(X)$, its lift $f\circ A_r$ is a meromorphic function on $X$.

Moreover, the algebraic independence of meromorphic functions is preserved by this lifting. Indeed, let $f_1,\dots , f_k$ be algebraically independent meromorphic functions on $Alb_r(X)$. If their lifts $f_1\circ A_r,\dots , f_k\circ A_r$ were not algebraically independent, there would exist a non-zero polynomial $P\in\C[T_1,\dots , T_k]$ such that $P(f_1\circ A_r,\dots , f_k\circ A_r) = 0$ as a meromorphic function on $X$. Now, $A_r$ is surjective, so, in particular, every point in the dense open subset of $Alb_r(X)$ where all the functions $f_1,\dots , f_k$ are holomorphic has a preimage under $A_r$. This implies that $P(f_1,\dots , f_k) = 0$ as a meromorphic function on $Alb_r(X)$, which contradicts the algebraic independence of $f_1,\dots , f_k$.

The contention follows. Note that we have not used the submersivity of the $E_r$-Albanese map $A_r:X\longrightarrow Alb_r(X)$, but only its holomorphicity and its surjectivity.   \hfill $\Box$

\vspace{2ex}

As an example of application of Corollary \ref{Cor:A_r_c-par-page-ddbar_alg-dim_ineq}, we can recover the following well-known fact:

\begin{Cor}\label{Cor:Iwasawa_alg-dim_2} The algebraic dimension of the $3$-dimensional Iwasawa manifold $I^{(3)}$ is $2$: \begin{eqnarray}\label{eqn:alg-dim_Iwasawa}a(I^{(3)}) = 2.\end{eqnarray}

\end{Cor}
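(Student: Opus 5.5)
We prove $a(I^{(3)}) = 2$ by a lower bound from Corollary \ref{Cor:A_r_c-par-page-ddbar_alg-dim_ineq} with $r=2$, and an upper bound from a separate argument.

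For the lower bound, recall that $I^{(3)}=\Gamma\backslash G$, where $G$ is the complex Heisenberg group. On $I^{(3)}$ one has global holomorphic $(1,\,0)$-forms $\omega^1=dz_1$, $\omega^2=dz_2$ and $\omega^3=dz_3-z_1\,dz_2$. They are pointwise linearly independent, and they satisfy $d\omega^1=d\omega^2=0$ and $d\omega^3=-\omega^1\wedge\omega^2\neq 0$. So $I^{(3)}$ is complex parallelisable. By [PSU20a, Theorem 4.7], recalled in (I) of the introduction, it is a page-$1$-$\partial\bar\partial$-manifold, so the setting of Corollary \ref{Cor:A_r_c-par-page-ddbar_alg-dim_ineq} applies with $r=2$.

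By Lemma \ref{Lem:E_r_1-0_singleton_ker}(ii), $E_2^{1,\,0}(I^{(3)})$ is the space of $d$-closed holomorphic $1$-forms. A combination $a_1\omega^1+a_2\omega^2+a_3\omega^3$ has differential $-a_3\,\omega^1\wedge\omega^2$, so it is $d$-closed exactly when $a_3=0$. Hence
\begin{eqnarray*}E_2^{1,\,0}(I^{(3)})=\C\,\langle\omega^1,\,\omega^2\rangle, \hspace{5ex} g_2=2.\end{eqnarray*}
This is consistent with $b_1(I^{(3)})=4$.

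By Proposition \ref{Prop:E_r-Albanese-map_hol}, the map $A_2$ is $x\mapsto(z_1,\,z_2)$ modulo the period lattice $\Lambda$. Since $\Gamma$ consists of matrices with entries in $\Z[i]$, the forms $dz_1,\,dz_2$ have periods $\Lambda=\Z[i]^2$. Thus
\begin{eqnarray*}Alb_2(I^{(3)})=\C^2/\Z[i]^2=E\times E, \hspace{5ex} E:=\C/\Z[i].\end{eqnarray*}
This is an abelian surface, so $a(Alb_2(I^{(3)}))=2$. Corollary \ref{Cor:A_r_c-par-page-ddbar_alg-dim_ineq} then gives $a(I^{(3)})\geq 2$. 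The only point needing care is identifying $\Lambda$ precisely as the lattice $\Z[i]^2$, so that the torus is visibly projective.

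The upper bound $a(I^{(3)})\leq 2$ is the main obstacle, since it is not covered by the paper's machinery. The plan is to show that $I^{(3)}$ is not Moishezon, and then use that $a(X)=n$ characterises Moishezon manifolds.

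I would first try the $\partial\bar\partial$-lemma. A Moishezon manifold is bimeromorphic to a projective manifold, hence satisfies the $\partial\bar\partial$-lemma, and in particular its holomorphic forms are $d$-closed. On $I^{(3)}$, however, the holomorphic form $\omega^3$ is not $d$-closed, since $d\omega^3=-\omega^1\wedge\omega^2\neq 0$. So $I^{(3)}$ is not Moishezon, and therefore $a(I^{(3)})<3$.

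An alternative route is the principal elliptic bundle $A_2:I^{(3)}\to E\times E$. Its fibres are copies of $E$, and the bundle is non-trivial because $d\omega^3\neq 0$. One then argues that meromorphic functions are constant on the fibres. Combining either upper-bound argument with the lower bound yields (\ref{eqn:alg-dim_Iwasawa}).
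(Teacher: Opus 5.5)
Your proposal is correct and follows essentially the same route as the paper: the lower bound $a(I^{(3)})\geq 2$ comes from Corollary \ref{Cor:A_r_c-par-page-ddbar_alg-dim_ineq} with $r=2$, applied to the $E_2$-Albanese map onto the abelian surface $\C/\Z[i]\times\C/\Z[i]$, and the upper bound comes from the fact that a Moishezon $I^{(3)}$ would be a $\partial\bar\partial$-manifold. You make explicit what the paper takes as standard, namely $E_2^{1,\,0}(I^{(3)})=\C\langle\omega^1,\,\omega^2\rangle$, the period lattice $\Z[i]^2$, and the failure of the $\partial\bar\partial$-property witnessed by the non-closed holomorphic form $\omega^3$; the alternative argument via the fibres of the elliptic bundle, which you leave unproved, is therefore not needed.
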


\noindent {\it Proof.} The Iwasawa manifold is a {\it complex parallelisable page-$1$-$\partial\bar\partial$}-manifold (see [PSU20a, Proposition 4.3] for the latter claim, the former being standard) and our $E_2$-Albanese map $A_2:I^{(3)}\longrightarrow Alb_2(I^{(3)})$ coincides with the standard fibration of $I^{(3)}$ over an Abelian surface. In particular, the $E_2$-Albanese torus $Alb_2(I^{(3)})$ of $I^{(3)}$ is a $2$-dimensional complex projective manifold. Hence, it is also Moishezon, so $a(Alb_2(I^{(3)})) = 2$.

From our Corollary \ref{Cor:A_r_c-par-page-ddbar_alg-dim_ineq}, we get the first inequality below: \begin{eqnarray*}2=a\bigg(Alb_2(I^{(3)})\bigg)\leq a(I^{(3)})\leq 3,\end{eqnarray*} so $a(I^{(3)})\in\{2,\,3\}$. However, $a(I^{(3)})\neq 3$ because otherwise the Iwasawa manifold $I^{(3)}$ would be Moishezon. It would then also be a $\partial\bar\partial$-manifold, which is known not to be true. (Even worse, the Fr\"olicher spectral sequence of $I^{(3)}$ does not degenerate at $E1$, it only degenerates at $E_2$.) The contention follows. \hfill $\Box$

\section{Case of $E_r$-sGG manifolds}\label{section:E_r-sGG}  Let $X$ be a compact complex manifold with $\mbox{dim}_\C X =n$. We will identify a Hermitian metric on $X$ with the associated $C^\infty$ positive definite $(1,\,1)$-form on $X$. Recall that a Hermitian metric $\omega$ on $X$ is said to be a {\bf Gauduchon metric} (cf. [Gau77]) if $\partial\bar\partial\omega^{n-1} = 0$.

\subsection{Background}\label{subsection:background_sGG}

We start by recalling the following notions and the works where they were introduced:

\begin{Def}\label{Def:sG_definitions} (i)\, ([Pop13, Definition 4.1])\, A Hermitian metric $\omega$ on $X$ is said to be {\bf strongly Gauduchon (sG)} if the $(n,\,n-1)$-form $\partial\omega^{n-1}$ is $\bar\partial$-exact.

  If $X$ carries a strongly Gauduchon metric, $X$ is said to be a {\bf strongly Gauduchon (sG) manifold}.

\vspace{1ex}

(ii)\, ([PU, Definition 1.2 and Lemma 1.3])\, One says that $X$ is an {\bf sGG manifold} if every Gauduchon metric $\omega$ on $X$ is strongly Gauduchon.

\vspace{1ex}

(iii)\, ([Pop19, Definition 3.1])\, For a given integer $r\geq 1$, a Gauduchon metric $\omega$ on $X$ is said to be an {\bf $E_r$-sG metric} if $\partial\omega^{n-1}$ is $E_r$-exact.

If $X$ carries an $E_r$-sG metric, $X$ is said to be an {\bf $E_r$-sG manifold}.

If every Gauduchon metric $\omega$ on $X$ is an $E_r$-sG metric, $X$ is said to be an {\bf $E_r$-sGG manifold}.
  
\end{Def}

\vspace{2ex}

Recall that $\partial\omega^{n-1}$ being {\bf $E_r$-exact} (a condition that, for bidegree reasons, is non-trivial only for $r=1, 2, 3$) means that there exist forms $\zeta_{r-2}\in C^\infty_{n-1,\,n-1}(X,\,\C)$ and $\xi_0\in C^\infty_{n,\,n-2}(X,\,\C)$ such that \begin{eqnarray}\label{eqn:E_r_exact_def_1}\partial\omega^{n-1} = \partial\zeta_{r-2} + \bar\partial\xi_0\end{eqnarray} and such that $\zeta_{r-2}$ satisfies a further $(r-1)$ conditions (so, there are at most two of them): there exists a $C^\infty$ form $v_{r-3}^{(r-2)}\in C^\infty_{n-2,\,n}(X,\,\C)$ such that \begin{eqnarray}\label{eqn:E_r_exact_def_2}\bar\partial\zeta_{r-2} = \partial v_{r-3}^{(r-2)} \hspace{3ex}\mbox{and}\hspace{3ex}  \bar\partial v_{r-3}^{(r-2)} = 0.\end{eqnarray} Note that the last condition is always trivially satisfied, for bidegree reasons. However, the notions of $E_2$-exactness and $E_3$-exactness in bidegree $(n,\,n-1)$ are different, in general. 

  Specifically, $\partial\omega^{n-1}$ is {\bf $E_1$-exact} if and only if $\partial\omega^{n-1}\in\mbox{Im}\,\bar\partial$, while $\partial\omega^{n-1}$ is {\bf $E_2$-exact} if and only if there exist forms $\zeta\in C^\infty_{n-1,\,n-1}(X,\,\C)$ and $\xi\in C^\infty_{n,\,n-2}(X,\,\C)$ such that \begin{eqnarray}\label{eqn:E_2_exact_def}\partial\omega^{n-1} = \partial\zeta +  \bar\partial\xi \hspace{3ex}\mbox{and}\hspace{3ex} \bar\partial\zeta = 0.\end{eqnarray} The form $\partial\omega^{n-1}$ is {\bf $E_3$-exact} if and only if the first condition in (\ref{eqn:E_2_exact_def}) is satisfied and $\bar\partial\zeta\in\mbox{Im}\,\partial$.

  Since $E_1$-exactness is the usual $\bar\partial$-exactness, the notions of {\it $sG$-metrics} and {\it $E_1$-sG metrics} coincide. Hence, the notions of {\it sGG manifold} and {\it $E_1$-sGG manifold} coincide.

 On the other hand, for any Hermitian metric $\omega$ on $X$ and every integer $r\geq 1$, one has the following implications: \begin{eqnarray}\label{eqn:E_r-sG_implications}\omega \hspace{1ex} \mbox{is} \hspace{1ex} E_1\mbox{-sG} \implies \omega \hspace{1ex} \mbox{is} \hspace{1ex} E_2\mbox{-sG} \implies \omega \hspace{1ex} \mbox{is} \hspace{1ex} E_3\mbox{-sG} \implies \omega \hspace{1ex} \mbox{is Gauduchon}.\end{eqnarray}

  Moreover, the notion of {\it $E_r$-sGG manifold} generalises that of {\it page-(r-1)-$\partial\bar\partial$-manifold} thanks to the following

\begin{Prop}([PSU20b, Proposition 5.2])\label{Prop:page_r-1_ddbar_E_r-sGG} For any $r\in\N^\star$, the following implication holds: \begin{eqnarray}\label{eqn:page_r-1_ddbar_E_r-sGG}X \hspace{1ex} \mbox{is a {\bf page-(r-1)-$\partial\bar\partial$-manifold}} \implies X \hspace{1ex} \mbox{is an {\bf $E_r$-sGG manifold}}.\end{eqnarray}

\end{Prop}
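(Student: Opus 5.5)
Fix a Gauduchon metric $\omega$ on a page-(r-1)-$\partial\bar\partial$-manifold $X$. The plan is to show directly that the $(n,\,n-1)$-form $\partial\omega^{n-1}$ is $E_r$-exact, using the $E_r$-Hodge theory of $X$ together with Theorem \ref{The:page_r-1_pure-characterisation}. The overall strategy is: (1) show that $\partial\omega^{n-1}$ represents a class in $E_r^{n,\,n-1}(X)$; (2) show that this class is zero by pairing it against $E_r^{0,\,1}(X)$ via the duality of Theorem \ref{The:duality_E_r}; (3) conclude, since an $E_r$-closed form with vanishing $E_r$-class is $E_r$-exact by definition.

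\emph{Step 1.} Since $\partial\omega^{n-1}$ is $\partial$-closed trivially and $\bar\partial\partial\omega^{n-1}=0$ by the Gauduchon condition, the form $\partial\omega^{n-1}$ is $d$-closed of pure type $(n,\,n-1)$. A $d$-closed pure-type form is $E_r$-closed for every $r$, because all the successive obstructions $d_1, d_2,\dots$ vanish when one can take the zero auxiliary forms. Hence it defines a class $\mathfrak{c}:=\{\partial\omega^{n-1}\}_{E_r}\in E_r^{n,\,n-1}(X)$.

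\emph{Step 2.} By Theorem \ref{The:duality_E_r} with $(p,\,q)=(n,\,n-1)$, the class $\mathfrak{c}$ vanishes if and only if $\int_X\partial\omega^{n-1}\wedge\alpha=0$ for every class $\{\alpha\}_{E_r}\in E_r^{0,\,1}(X)$. By (i) of Definition \ref{Def:page-1-dd-bar}, each such class admits a $d$-closed $(0,\,1)$-representative $\alpha$, and the pairing does not depend on the representative. Stokes' theorem then gives
\begin{eqnarray*}\int_X\partial\omega^{n-1}\wedge\alpha=\int_X d\omega^{n-1}\wedge\alpha=\pm\int_X\omega^{n-1}\wedge d\alpha=0,\end{eqnarray*}
where the first equality holds for bidegree reasons: the $(n-1,\,n)$-part $\bar\partial\omega^{n-1}$ wedged with a $(0,\,1)$-form vanishes. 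So $\mathfrak{c}=0$.

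\emph{Step 3.} Since $\mathfrak{c}=0$, the form $\partial\omega^{n-1}$ is $E_r$-exact in the sense of [PSU20b, Proposition 2.3]: vanishing of the $E_r$-class of an $E_r$-closed form means exactly that it lies in the space of $E_r$-exact forms. Hence $\omega$ is $E_r$-sG. As $\omega$ was an arbitrary Gauduchon metric, $X$ is $E_r$-sGG.

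The only genuinely delicate point is Step 1, together with the compatibility used in Step 2. One must make sure that the realisation of $E_r^{p,\,q}(X)$ as $E_r$-closed forms modulo $E_r$-exact forms is compatible with the pairing of Theorem \ref{The:duality_E_r}, so that testing on $d$-closed representatives is legitimate. The $d$-closed representatives are supplied by the page-(r-1)-$\partial\bar\partial$ hypothesis, and this is the only place that hypothesis is used. No purity or degeneration beyond Definition \ref{Def:page-1-dd-bar}(i) appears to be needed.
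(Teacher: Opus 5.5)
Your argument is correct and complete. The paper itself gives no proof of this statement; it imports it from [PSU20b, Proposition 5.2]. The relevant comparison is therefore with what the paper's own machinery offers.

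Your pairing computation is essentially the mechanism behind the implication ``$F_r^\star$ injective $\Rightarrow$ $E_r$-sGG'' in Theorem \ref{The:E_r-sGG_Hodge-decomp}, collapsed into one step. The paper evaluates $F_r^\star$ at $(\{\partial\omega^{n-1}\}_{E_r},\,\overline{\{\partial\omega^{n-1}\}_{E_r}})$ and uses that $F_r^\star$ is dual (via Theorem \ref{The:duality_E_r}) to $F_r$. Surjectivity of $F_r$ is exactly what $d$-closed $(0,\,1)$-representatives provide. You instead pair $\partial\omega^{n-1}$ directly against those representatives.

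Within the paper there are two quicker routes. One combines Remark \ref{Rem:g_r_tori-duality}(ii), which gives $b_1(X)=2\,e_r^{0,\,1}(X)$, with (3) of Theorem \ref{The:E_r-sGG_Hodge-decomp}. The other avoids duality altogether and uses injectivity of $T_r^{(2n-1)}$ from Definition \ref{Def:page-1-dd-bar}(ii). Both $\partial\omega^{n-1}$ and $\bar\partial\omega^{n-1}$ are $d$-closed of pure type, and $T_r^{(2n-1)}$ sends the sum of their $E_r$-classes to $\{d\omega^{n-1}\}_{dR}=0$, so both classes vanish.

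What your route buys is economy of hypothesis. It uses only condition (i) of Definition \ref{Def:page-1-dd-bar}, and only in bidegree $(0,\,1)$. So it actually shows that the existence of $d$-closed representatives for all classes of $E_r^{0,\,1}(X)$ already forces $E_r$-sGG. Together with Corollary \ref{Cor:d-closed-rep_E_r_0-1}, this characterises $E_r$-sGG manifolds. The degree-$(2n-1)$ route needs no duality theorem, but it uses the full isomorphism (ii).

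The ``delicate point'' you flag is not really an issue. The well-definedness in Theorem \ref{The:duality_E_r} holds for arbitrary $E_r$-closed representatives, and a $d$-closed $(0,\,1)$-form is one, so testing on it is legitimate.
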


\subsection{Hodge theory of $E_r$-sGG manifolds}\label{subsection:Hodge_E_r-sGG}

{\it $E_1$-sGG manifolds} were introduced and investigated in detail in [PU]. In this subsection, we generalise to {\it $E_r$-sGG manifolds} their properties that will be of interest to us in subsequent parts of this paper.

\vspace{1ex}

We start with the following characterisation. It generalises Lemma 1.3 in [PU].

\begin{Lem}\label{Lem:E_r-sGG_map-characterisation} Let $X$ be a compact complex manifold with $\mbox{dim}_\C X=n$. Fix $r\in\N^\star$.

\vspace{1ex}

$(1)$\, Let $H_A^{n-1,\,n-1}(X,\,\C)$ be the Aeppli cohomology group of bidegree $(n-1,\,n-1)$ of $X$. The following map is well defined: \begin{eqnarray}\label{eqn::E_r-sGG_map}T:H_A^{n-1,\,n-1}(X,\,\C)\longrightarrow E_r^{n,\,n-1}(X,\,\C), \hspace{5ex} T([\Omega]_A) = \{\partial\Omega\}_{E_r}.\end{eqnarray}

\vspace{1ex}

$(2)$\, Let ${\cal G}_X\subset H_A^{n-1,\,n-1}(X,\,\R)$ be the Gauduchon cone of $X$. The following equivalence holds: \begin{eqnarray*}\label{eqn:T-vanishing_Gauduchon-cone}T\equiv 0 \iff T_{|{\cal G}_X}\equiv 0.\end{eqnarray*}

\vspace{1ex}

$(3)$\, For any integer $r\geq 1$, the following equivalences hold: \begin{eqnarray}\label{eqn:E_r-sGG_equivalences}\nonumber X \hspace{1ex}\mbox{is $E_r$-sGG} \iff T\equiv 0 & \iff & \mbox{for every}\hspace{1ex} \Gamma\in C^\infty_{n,\,n-1}(X,\,\C) \hspace{1ex} \mbox{such that}\hspace{1ex} d\Gamma = 0, \\
\nonumber  & & \mbox{the following implication holds:} \\
  & & \Gamma\in\mbox{Im}\,\partial \implies \Gamma \hspace{1ex}\mbox{is $E_r$-exact}.\end{eqnarray}
  
\end{Lem}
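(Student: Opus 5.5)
For (1), I will check that $\{\partial\Omega\}_{E_r}$ is independent of the Aeppli representative. First, $\partial\Omega$ is an $(n,\,n-1)$-form with $\partial(\partial\Omega)=0$ trivially and $\bar\partial\partial\Omega=0$ because $\Omega$ is Aeppli-closed ($\partial\bar\partial\Omega=0$). So $\partial\Omega$ is $d$-closed of top holomorphic degree, hence $E_r$-closed for every $r$; I expect this to follow from the conditions recalled in [PSU20b, Proposition 2.3], since all the higher $d_r$ maps out of bidegree $(n,\,n-1)$ land in bidegrees with $p>n$. Next, if $\Omega'=\Omega+\partial u+\bar\partial v$ with $u\in C^\infty_{n-2,\,n-1}$ and $v\in C^\infty_{n-1,\,n-2}$, then $\partial\Omega'-\partial\Omega=\partial\bar\partial v=\bar\partial(-\partial v)$. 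This is $\bar\partial$-exact, hence $E_1$-exact, hence $E_r$-exact for all $r$, since $E_1$-exactness implies $E_r$-exactness (take $\zeta=0$ in (\ref{eqn:E_r_exact_def_1})). Therefore $T$ is well defined and $\C$-linear.

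For (2), the implication ``$\Rightarrow$'' is trivial. For ``$\Leftarrow$'', I will use that the Gauduchon cone ${\cal G}_X$ is a nonempty open convex cone in the finite-dimensional real space $H_A^{n-1,\,n-1}(X,\,\R)$. It is nonempty by Gauduchon's theorem [Gau77]. It is open because positivity of $\omega^{n-1}$ is an open condition and, by Michelsohn's $(n-1)$-st root, every positive definite $\partial\bar\partial$-closed $(n-1,\,n-1)$-form is $\omega^{n-1}$ for a unique Gauduchon metric $\omega$. An open cone spans its ambient real space, so a real-linear map vanishing on it vanishes on $H_A^{n-1,\,n-1}(X,\,\R)$. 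Complex linearity then gives vanishing on $H_A^{n-1,\,n-1}(X,\,\C)=H_A^{n-1,\,n-1}(X,\,\R)\otimes\C$, using real and imaginary parts of representatives.

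For (3), the first equivalence follows from (2). Indeed, $X$ is $E_r$-sGG if and only if $\{\partial\omega^{n-1}\}_{E_r}=0$ for every Gauduchon $\omega$, that is, if and only if $T$ vanishes on ${\cal G}_X$. For the second equivalence, suppose first that $T\equiv0$ and let $\Gamma=\partial\Omega$ be $d$-closed. Then $\bar\partial\partial\Omega=0$, so $\Omega$ defines an Aeppli class, and $\{\Gamma\}_{E_r}=T([\Omega]_A)=0$. Conversely, for any Aeppli class $[\Omega]_A$ the form $\Gamma:=\partial\Omega$ is $d$-closed and lies in $\operatorname{Im}\partial$. The hypothesis then gives that $\Gamma$ is $E_r$-exact, so $T([\Omega]_A)=0$.

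I expect the main obstacle to be the bookkeeping in (1): verifying that $\partial\Omega$ is genuinely $E_r$-closed in the sense of [PSU20b] for $r=2,3$, and not merely $d$-closed. I plan to handle this by noting that every $d_r$ starting in bidegree $(n,\,n-1)$ has target of bidegree $(n+r,\,n-r)$, which is zero for $r\geq1$. Hence $E_r^{n,\,n-1}$ is a quotient of $\bar\partial$-closed forms, and $\partial\Omega$ is $\bar\partial$-closed.
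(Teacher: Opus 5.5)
Your proposal is correct and follows essentially the same route as the paper. For (1), (3) and the first equivalence in (3) your argument matches the paper's: $\bar\partial$-closedness suffices for $E_r$-closedness in bidegree $(n,\,n-1)$, the difference $\partial\Omega'-\partial\Omega$ is $\bar\partial$-exact, and the second equivalence runs both ways exactly as written there. For (2) you simply supply the details the paper calls obvious: openness of the Gauduchon cone via Michelsohn's $(n-1)$-st root, and the passage from real to complex Aeppli classes via real and imaginary parts of representatives.
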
  

\noindent {\it Proof.} $(1)$\, If $\Omega\in C^\infty_{n-1,\,n-1}(X,\,\C)$ represents an Aeppli cohomology class, $\partial\bar\partial\Omega=0$. Thus, $\partial\Omega$ is $\bar\partial$-closed of bidegree $(n,\,n-1)$ and $\partial(\partial\Omega) = 0$. This proves that $\partial\Omega$ is $E_r$-closed for every $r\geq 1$, so $\partial\Omega$ defines a class $\{\partial\Omega\}_{E_r}\in E_r^{n,\,n-1}(X,\,\C)$.

To prove the independence of this class of the choice of representative $\Omega\in C^\infty_{n-1,\,n-1}(X,\,\C)$ of its Aeppli class, suppose that $\Omega$ defines the zero class in $H_A^{n-1,\,n-1}(X,\,\C)$. This amounts to $\Omega = \partial u^{n-2,\,n-1} + \bar\partial v^{n-1,\,n-2}$ for some forms of the shown bidegrees. Then $\partial\Omega = \bar\partial(-\partial v^{n-1,\,n-2})$, so $\partial\Omega$ is $\bar\partial$-exact (equivalently, $E_1$-exact), hence also $E_r$-exact for every $r\geq 1$. This means that $\{\partial\Omega\}_{E_r} = 0\in E_r^{n,\,n-1}(X,\,\C)$.

\vspace{1ex}

$(2)$\, This equivalence is obvious because ${\cal G}_X$ is a non-empty open cone in the real Aeppli cohomology space $H_A^{n-1,\,n-1}(X,\,\R)$ and $T$ is linear.

\vspace{1ex}

$(3)$\, $\bullet$ {\it Proof of the first equivalence}

\vspace{1ex}

By definition of the Gauduchon cone, the elements of ${\cal G}_X$ are the Aeppli cohomology classes $[\omega^{n-1}]_A$ induced by all the Gauduchon metrics $\omega$ on $X$. For any such $\omega$, the condition that $\partial\omega^{n-1}$ be $E_r$-exact is equivalent to $T([\partial\omega^{n-1}]_A) = 0\in E_r^{n,\,n-1}(X,\,\C)$. Using the definition of what it means for $X$ to be an $E_r$-sGG manifold (cf. (iii) of Definition \ref{Def:sG_definitions}), we deduce the equivalence: \begin{eqnarray*}X \hspace{1ex}\mbox{is $E_r$-sGG} \iff T_{|{\cal G}_X}\equiv 0.\end{eqnarray*} Combined with the equivalence under $(2)$, this proves the first equivalence under $(3)$.

\vspace{1ex}

$\bullet$ {\it Proof of the second equivalence}

\vspace{1ex}

``$\implies$'' Suppose $T\equiv 0$. Let $\Gamma\in C^\infty_{n,\,n-1}(X,\,\C)$ be such that $d\Gamma=0$ and $\Gamma = \partial\Omega$ for some smooth $(n-1,\,n-1)$-form $\Omega$. We get: \begin{eqnarray*}0 = d\Gamma = \bar\partial\partial\Omega.\end{eqnarray*} This shows that $\Omega$ defines an Aeppli class $[\Omega]_A\in H_A^{n-1,\,n-1}(X,\,\C)$.

Now, the hypothesis $T\equiv 0$ yields the first equality in: $0 = T([\Omega]_A) = \{\partial\Omega\}_{E_r}$. We conclude that $\Gamma = \partial\Omega$ is $E_r$-exact, as desired.

\vspace{1ex}

``$\impliedby$'' Let $\Omega\in C^\infty_{n-1,\,n-1}(X,\,\C)$ such that $\partial\bar\partial\Omega = 0$. Then $\Gamma:=\partial\Omega\in C^\infty_{n,\,n-1}(X,\,\C)$ satisfies the hypotheses of the implication on the right-hand side of the second equivalence in $(3)$. Therefore, $\Gamma=\partial\Omega$ is $E_r$-exact, which yields $T([\Omega]_A) = \{\partial\Omega\}_{E_r} = 0$.

We conclude that $T\equiv 0$.   \hfill $\Box$

\vspace{2ex}

We are now in a position to prove that, in degrees $1$ and $2n-1$, $E_r$-sGG manifolds satisfy a weaker {\bf Hodge decomposition}-type property than page-(r-1)-$\partial\bar\partial$-manifolds. It is nearly as strong as the analogous property in those degrees of the latter class of manifolds (cf. Definition \ref{Def:page-1-dd-bar}) and it characterises the former class. The following result generalises Theorems 1.6 and 3.1 of [PU].

\begin{The}\label{The:E_r-sGG_Hodge-decomp} Let $X$ be a compact complex manifold with $\mbox{dim}_\C X=n$. Fix $r\in\N^\star$. 

\vspace{1ex}

$(1)$\, The following canonical $\C$-linear map is {\bf well defined} and {\bf injective}: \begin{eqnarray}\label{eqn:F-map_def}F_r:H^1_{dR}(X,\,\C)\longrightarrow E_r^{0,\,1}(X)\oplus\overline{E_r^{0,\,1}(X)}, \hspace{5ex} \{\alpha\}_{dR}\longmapsto\bigg(\{\alpha^{0,\,1}\}_{E_r},\,\overline{\{\overline{\alpha^{1,\,0}}}\}_{E_r}\bigg).\end{eqnarray}

Hence, $b_1(X)\leq 2\,e_r^{0,\,1}(X)$, where $b_1(X)=\mbox{dim}_\C H^1_{dR}(X,\,\C)$ and $e_r^{0,\,1}(X)=\mbox{dim}_\C E_r^{0,\,1}(X)$.

\vspace{1ex}

$(2)$\, The following canonical $\C$-linear map is {\bf well defined}: \begin{eqnarray}\label{eqn:F-star-map_def}F_r^\star:E_r^{n,\,n-1}(X)\oplus\overline{E_r^{n,\,n-1}(X)}\longrightarrow H^{2n-1}_{dR}(X,\,\C), \hspace{5ex} \bigg(\{\beta\}_{E_r},\,\overline{\{\gamma\}}_{E_r}\bigg)\longmapsto \{\beta + \overline\gamma\}_{dR}.\end{eqnarray} Moreover, the maps $F_r^\star$ and $F_r$ are {\bf dual} to each other. Hence, $F_r^\star$ is {\bf surjective}.

\vspace{1ex}

$(3)$\, The following equivalences hold: \begin{eqnarray*}X \hspace{1ex}\mbox{is {\bf $E_r$-sGG}} \iff F_r \hspace{1ex}\mbox{is {\bf bijective}} \iff F_r^\star \hspace{1ex}\mbox{is {\bf bijective}} \iff b_1(X) = 2\,e_r^{0,\,1}(X).\end{eqnarray*}

\end{The}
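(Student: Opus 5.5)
For $(1)$ I would first prove well-definedness, then injectivity. Let $\alpha=\alpha^{1,0}+\alpha^{0,1}$ be $d$-closed. By bidegree, $\bar\partial\alpha^{0,1}=0$ and $\partial\alpha^{1,0}=0$, while $\partial\alpha^{0,1}=-\bar\partial\alpha^{1,0}$. Hence $\alpha^{0,1}$ is $E_1$-closed. Moreover $\partial\alpha^{0,1}$ is $\bar\partial$-exact, so $d_1\{\alpha^{0,1}\}_{E_1}=0$. The higher differentials $d_r$ on $E_r^{0,1}$ are then computed by the zig-zag $\alpha^{0,1},\,-\alpha^{1,0}$, which terminates because $\partial\alpha^{1,0}=0$. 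So $\alpha^{0,1}$ is $E_r$-closed for every $r$; the same holds for $\overline{\alpha^{1,0}}$ by conjugation.

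If $\alpha=df$ with $f$ a function, then $\alpha^{0,1}=\bar\partial f$ is $E_1$-exact, hence $E_r$-exact. Similarly $\overline{\alpha^{1,0}}=\bar\partial\bar f$. This gives well-definedness.

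For injectivity, suppose $\alpha^{0,1}$ is $E_r$-exact. In bidegree $(0,1)$, $E_r$-exactness reduces to $\alpha^{0,1}=\bar\partial f$, since there are no nonzero forms of negative degree to contribute. Likewise $\overline{\alpha^{1,0}}=\bar\partial g$, i.e.\ $\alpha^{1,0}=\partial\bar g$. Then $d$-closedness gives $\partial\bar\partial(f-\bar g)=0$, so $f-\bar g$ is constant by the maximum principle on compact $X$. It follows that $\alpha=\partial\bar g+\bar\partial f=d f$ up to adding the constant, so $\{\alpha\}_{dR}=0$. The bound $b_1\le 2e_r^{0,1}$ follows immediately.

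For $(2)$ I would check that $F_r^\star$ is well defined. Any $E_r$-class in bidegree $(n,n-1)$ is represented by a form $\beta$ with $\bar\partial\beta=0$, and $\partial\beta=0$ holds trivially by bidegree, so $\beta$ is $d$-closed. If $\beta$ is $E_r$-exact, then $\beta=\partial\zeta+\bar\partial\xi$ with the $E_r$-conditions. Here $\bar\partial\xi=d\xi$ because $\partial\xi$ has bidegree $(n+1,n-2)$ and vanishes. The remaining piece $\partial\zeta$ is $d$-exact modulo $\bar\partial\zeta$, and for $r\ge2$ the conditions say $\bar\partial\zeta=\partial v$ with $v$ of bidegree $(n-2,n)$. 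In that case $d(\zeta-v)=\partial\zeta-\bar\partial v=\partial\zeta$, since $\bar\partial v=0$ by bidegree. So when $r\ge2$, $E_r$-exact $(n,n-1)$-forms are $d$-exact.

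The case $r=1$ is direct, since there $\beta=\bar\partial\xi=d\xi$. When $r=2$ we have $\bar\partial\zeta=0$ and $d\zeta=\partial\zeta$. The $E_3$ case is exactly the computation above.

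Duality of $F_r$ and $F_r^\star$ is then a pairing computation. For $d$-closed $\alpha$,
\[\int_X\alpha\wedge(\beta+\bar\gamma)=\int_X\alpha^{0,1}\wedge\beta+\int_X\alpha^{1,0}\wedge\bar\gamma.\]
Here Poincaré duality on the left is matched with the pairing of Theorem \ref{The:duality_E_r} on the right, the second term being paired after conjugation. Non-degeneracy of both pairings then converts the injectivity of $F_r$ into the surjectivity of $F_r^\star$.

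For $(3)$, the last three conditions are equivalent by dimension count and duality: $F_r$ is bijective iff $b_1=2e_r^{0,1}$ iff $F_r^\star$ is bijective. The real content is the equivalence with the $E_r$-sGG property. I would show that $\ker F_r^\star$ is the conjugation-symmetrisation of $\{\{\Gamma\}_{E_r}:\Gamma\in\mbox{Im}\,\partial,\ d\Gamma=0\}$, and then invoke Lemma \ref{Lem:E_r-sGG_map-characterisation}(3).

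Suppose $(\{\beta\},\overline{\{\gamma\}})\in\ker F_r^\star$, so $\beta+\bar\gamma=du$ with $u=u^{n,n-2}+u^{n-1,n-1}+u^{n-2,n}$. Comparing the $(n,n-1)$-components gives $\beta=\partial u^{n-1,n-1}+\bar\partial u^{n,n-2}$. Thus $\{\beta\}_{E_r}=\{\partial u^{n-1,n-1}\}_{E_r}$ with $\partial u^{n-1,n-1}$ $d$-closed, and similarly for $\gamma$ using $\overline{u^{n-1,n-1}}$. Conversely, given $\Gamma=\partial\Omega$ with $d\Gamma=0$, the pair $(\{\partial\Omega\},\overline{\{\partial\bar\Omega\}})$ maps to $\{d(\Omega)\}=0$, up to a $\bar\partial$-exact correction. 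If $T\not\equiv 0$, this produces a nonzero kernel element. The converse direction, that $T\equiv0$ forces injectivity, requires that both components vanish. This two-sided bookkeeping with the Aeppli class of $u^{n-1,n-1}$ is where I expect the main care to be needed.
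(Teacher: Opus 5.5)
Your proposal is correct and follows the paper's proof in all three parts, including the reduction of (3) to Lemma \ref{Lem:E_r-sGG_map-characterisation}(3) via the decomposition $\beta=\partial u^{n-1,n-1}+\bar\partial u^{n,n-2}$. The only difference is that the paper tests injectivity of $F_r^\star$ just on $\Omega=\omega^{n-1}$ for Gauduchon $\omega$, which is equivalent to your use of arbitrary $\partial\bar\partial$-closed $\Omega$. The step you flag at the end needs no extra care: your decomposition, together with $\gamma=\partial\overline{u^{n-1,n-1}}+\bar\partial\overline{u^{n-2,n}}$, already gives $\{\beta\}_{E_r}=T([u^{n-1,n-1}]_A)$ and $\{\gamma\}_{E_r}=T([\overline{u^{n-1,n-1}}]_A)$, so both components vanish immediately when $T\equiv 0$.
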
  

\noindent {\it Proof.} $(1)$\, For any $1$-form $\alpha = \alpha^{1,\,0} + \alpha^{0,\,1}$ and $r\geq 2$, we have the equivalences: \begin{eqnarray*}d\alpha = 0 & \iff & \partial\alpha^{1,\,0} = 0 \hspace{2ex}\mbox{and}\hspace{2ex} \partial\alpha^{0,\,1} + \bar\partial\alpha^{1,\,0} = 0 \hspace{2ex}\mbox{and}\hspace{2ex} \bar\partial\alpha^{0,\,1} = 0 \\
  & \iff & \bigg(\bar\partial\alpha^{0,\,1} = 0 \hspace{2ex}\mbox{and}\hspace{2ex} \partial\alpha^{0,\,1} = \bar\partial(-\alpha^{1,\,0}) \hspace{2ex}\mbox{and}\hspace{2ex} \partial(-\alpha^{1,\,0}) = 0\bigg), \\
  & & \bigg(\bar\partial\overline{\alpha^{1,\,0}} = 0 \hspace{2ex}\mbox{and}\hspace{2ex} \partial\overline{\alpha^{1,\,0}} = \bar\partial(-\overline{\alpha^{0,\,1}}) \hspace{2ex}\mbox{and}\hspace{2ex} \partial(-\overline{\alpha^{0,\,1}}) = 0\bigg) \\
  & \iff & \alpha^{0,\,1} \hspace{1ex}\mbox{is}\hspace{1ex} E_r\mbox{-closed} \hspace{3ex}\mbox{and}\hspace{3ex} \overline{\alpha^{1,\,0}} \hspace{1ex}\mbox{is}\hspace{1ex} E_r\mbox{-closed}.\end{eqnarray*} If $r=1$, the second and third equivalences above are only implications. In all cases, for every $1$-form $\alpha$, the classes $\{\alpha^{0,\,1}\}_{E_r}\in E_r^{0,\,1}(X)$ and $\{\overline{\alpha^{1,\,0}}\}_{E_r}\in E_r^{0,\,1}(X)$ are well defined.

To show that $F_r$ is independent of the choice of representative of the class $\{\alpha\}_{dR}\in H^1_{dR}(X,\,\C)$, we let $\alpha = \alpha^{1,\,0} + \alpha^{0,\,1} = df$ for some smooth function $f$ on $X$. This is equivalent to: \begin{eqnarray*}\alpha^{1,\,0} = \partial f  \hspace{2ex}\mbox{and}\hspace{2ex} \alpha^{0,\,1} = \bar\partial f & \implies & \overline{\alpha^{1,\,0}} \hspace{2ex}\mbox{and}\hspace{2ex} \alpha^{0,\,1} \hspace{1ex}\mbox{are}\hspace{1ex} E_1\mbox{-exact} \implies \\
  \overline{\alpha^{1,\,0}} \hspace{2ex}\mbox{and}\hspace{2ex} \alpha^{0,\,1} \hspace{1ex}\mbox{are}\hspace{1ex} E_r\mbox{-exact for every $r\geq 1$}  & \iff & \{\alpha^{0,\,1}\}_{E_r} = \overline{\{\overline{\alpha^{1,\,0}}}\}_{E_r}  =0 \hspace{1ex} \mbox{for every $r\geq 1$}.\end{eqnarray*} This finishes the proof of the well-definedness of $F$.

\vspace{1ex}

To prove that $F_r$ is injective, let $\alpha = \alpha^{1,\,0} + \alpha^{0,\,1}$ be a $1$-form on $X$ such that $d\alpha = 0$ and $\alpha^{0,\,1}$ and $\overline{\alpha^{1,\,0}}$ are $E_r$-exact. Since in bidegree $(0,\,1)$, $E_r$-exactness is equivalent to $\bar\partial$-exactness, there exist smooth functions $f,g$ on $X$ such that $\alpha^{0,\,1} = \bar\partial f$ and $\overline{\alpha^{1,\,0}} = \bar\partial g$. This implies that \begin{eqnarray*}\alpha = \alpha^{1,\,0} + \alpha^{0,\,1} = \partial\bar{g} + \bar\partial f.\end{eqnarray*} Since $d\alpha = 0$, we get:  \begin{eqnarray*}0 = \partial\alpha^{0,\,1} + \bar\partial\alpha^{1,\,0} = \partial\bar\partial(f-\bar{g})  \hspace{5ex} \mbox{on}\hspace{1ex} X.\end{eqnarray*} Since $X$ is compact, the function $f-\bar{g}$ must constant. We get $\bar{g} = f + C$, hence $\alpha = \partial(f+C) + \bar\partial f = df$. This is tantamount to $\{\alpha\}_{dR} = 0$.

\vspace{1ex}

$(2)$\, Every $(n,\,n-1)$-form is trivially $\partial$-closed, so in bidegree $(n,\,n-1)$, $\bar\partial$-closedness (i.e. $E_1$-closedness) is equivalent to $E_r$-closedness for every $r\geq 1$. Thus, for any $E_r$-closed $(n,\,n-1)$-forms $\beta, \gamma$, we have $d\beta = d\gamma = 0$, so $\beta + \bar{\gamma}$ is $d$-closed and therefore it defines a De Rham class in $H^{2n-1}_{dR}(X,\,\C)$. To show that $F_r^{\star}$ is independent of the choices of representatives of the classes $\{\beta\}_{E_r}, \{\gamma\}_{E_r}\in E_r^{n,\,n-1}(X)$, suppose that $\beta$ and $\gamma$ are $E_r$-exact. We have seen in (\ref{eqn:E_r_exact_def_1}) and (\ref{eqn:E_r_exact_def_2}) that in bidegree $(n,\,n-1)$, only for $r=2, 3$ can $E_r$-exactness be strictly weaker than for the previous value of $r$. Since the case $r=1$ was treated in [PU, Theorem 3.1, (ii)], we will only deal with the cases $r=2$ and $r=3$.

If $\beta\in C^\infty_{n,\,n-1}(X,\,\C)$ is $E_2$-exact, then, as in (\ref{eqn:E_2_exact_def}), there exist forms $\zeta\in C^\infty_{n-1,\,n-1}(X,\,\C)$ and $\xi\in C^\infty_{n,\,n-2}(X,\,\C)$ such that \begin{eqnarray*}\beta = \partial\zeta +  \bar\partial\xi \hspace{3ex}\mbox{and}\hspace{3ex} \bar\partial\zeta = 0.\end{eqnarray*} This implies that $\beta = d(\zeta + \xi)$, so $\beta$ is $d$-exact. Similarly, if $\gamma\in C^\infty_{n,\,n-1}(X,\,\C)$ is $E_2$-exact, then $\gamma$ is $d$-exact. We get: $\{\beta + \overline\gamma\}_{dR} = 0\in H^{2n-1}_{dR}(X,\,\C)$.

If $\beta\in C^\infty_{n,\,n-1}(X,\,\C)$ is $E_3$-exact, then there exist forms $\zeta\in C^\infty_{n-1,\,n-1}(X,\,\C)$, $\xi\in C^\infty_{n,\,n-2}(X,\,\C)$ and $\eta\in C^\infty_{n-2,\,n}(X,\,\C) $ such that \begin{eqnarray*}\beta = \partial\zeta +  \bar\partial\xi \hspace{3ex}\mbox{and}\hspace{3ex} \bar\partial\zeta = \partial\eta.\end{eqnarray*} This implies that $\beta = d(\zeta + \xi - \eta)$, so $\beta$ is $d$-exact. Similarly, if $\gamma\in C^\infty_{n,\,n-1}(X,\,\C)$ is $E_3$-exact, then $\gamma$ is $d$-exact. Again, we get: $\{\beta + \overline\gamma\}_{dR} = 0\in H^{2n-1}_{dR}(X,\,\C)$.

We conclude that $F_r^{\star}$ is well defined.

\vspace{1ex}

We now prove that the maps $F_r$ and $F_r^{\star}$ are dual to each other. Under the Serre-type duality $E^{n,\,n-1}_r(X)\simeq(E^{0,\,1}_r(X))^{\star}$ of Theorem \ref{The:duality_E_r}, every pair $(\{\beta\}_{E_r},\,\overline{\{\gamma\}}_{E_r})\in E^{n,\,n-1}_r(X)\oplus\overline{E^{n,\,n-1}_r(X)}$ can be identified with the pair $(u,\,\bar{v})$ in which $u, v\,:\,E^{0,\,1}_r(X)\rightarrow\C$ are the $\C$-linear maps acting as \begin{eqnarray*}u(\{\alpha^{0,\,1}\}_{E_r}) = \int\limits_X\beta\wedge\alpha^{0,\,1} \hspace{2ex} \mbox{and} \hspace{2ex} v(\{\alpha^{0,\,1}\}_{E_r}) = \int\limits_X\gamma\wedge\alpha^{0,\,1}\end{eqnarray*} for every class $\{\alpha^{0,\,1}\}_{E_r}\in E^{0,\,1}_r(X)$ and $\bar{v}\,:\,\overline{E^{0,\,1}_r(X)}\to\C$ is the $\C$-linear map defined by $\bar{v}(\overline{\{\alpha^{0,\,1}\}}_{E_r}):=\overline{v(\{\alpha^{0,\,1}\}_{E_r})}$. Proving the duality between $F_r$ and $F_r^{\star}$ amounts to proving that \begin{equation}\label{eqn:dualityFstar}\sigma_{\beta + \bar\gamma} = (u + \bar{v})\circ F\end{equation} for any $\{\beta\}_{E_r}, \{\gamma\}_{E_r}\in E^{n,\,n-1}_r(X,\,\C)$, where $\sigma_{\beta + \bar\gamma}\,:\,H^1_{DR}(X,\,\C)\rightarrow\C$ is the $\C$-linear map representing the de Rham class \begin{eqnarray*}F_r^{\star}(\{\beta\}_{E_r},\,\overline{\{\gamma\}}_{E_r}) = \{\beta + \bar{\gamma}\}_{dR}\in H^{2n-1}_{dR}(X,\,\C)\simeq(H^1_{dR}(X,\,\C))^{\star}\end{eqnarray*} under Poincar\'e duality. By definition, this means that for every $\{\alpha\}_{dR}\in H^1_{dR}(X,\,\C)$ we have: \begin{eqnarray*}\sigma_{\beta + \bar\gamma}(\{\alpha\}_{dR}) = \int\limits_X(\beta + \bar\gamma)\wedge\alpha = \int\limits_X\beta\wedge\alpha^{0,\,1} + \int\limits_X\bar\gamma\wedge\alpha^{1,\,0} = \bigg((u + \bar{v})\circ F_r\bigg)(\{\alpha\}_{dR}).\end{eqnarray*}

\noindent This proves (\ref{eqn:dualityFstar}). We conclude that $F_r$ and $F_r^{\star}$ are dual to each other.

\vspace{1ex}

$(3)$\, It suffices to prove the equivalence: \begin{eqnarray*}X \hspace{1ex}\mbox{is {\bf $E_r$-sGG}} \iff  F^\star \hspace{1ex}\mbox{is {\bf injective}}.\end{eqnarray*}

To prove the implication ``$\implies$'', suppose that $X$ is an $E_r$-sGG manifold. Let $\{\beta\}_{E_r}, \{\gamma\}_{E_r}\in E^{n,\,n-1}_r(X)$, with $\beta$ and $\gamma$ of bidegree $(n,\,n-1)$, such that $\beta + \bar{\gamma} = d(\Omega^{n,\,n-2} + \Omega^{n-1,\,n-1} + \Omega^{n-2,\,n})$ for some forms $\Omega^{p,\,q}$ of the specified bidegrees. This amounts to having \begin{eqnarray*}\beta - \bar\partial\Omega^{n,\,n-2} = \partial\Omega^{n-1,\,n-1} \hspace{2ex}\mbox{and}\hspace{2ex} \gamma - \bar\partial\overline{\Omega^{n-2,\,n}} = \partial\overline{\Omega^{n-1,\,n-1}}.\end{eqnarray*}

Now, because $\bar\partial$-exactness ($=E_1$-exactness) implies $E_r$-exactness for every $r\geq 1$, we have: \begin{eqnarray*}\beta - \bar\partial\Omega^{n,\,n-2}\in\{\beta\}_{E_r} \hspace{3ex}\mbox{and}\hspace{3ex} \gamma - \bar\partial\overline{\Omega^{n-2,\,n}}\in\{\gamma\}_{E_r}.\end{eqnarray*}

On the other hand, $\partial\Omega^{n-1,\,n-1}$ and $\partial\overline{\Omega^{n-1,\,n-1}}$ are $d$-closed (because $\bar\partial\beta = \bar\partial\gamma = 0$ since $\beta$ and $\gamma$ are both $E_r$-closed, as representatives of $E_r$-classes) and $\partial$-exact $(n,\,n-1)$-forms on $X$. Thus, the $E_r$-sGG assumption on $X$ implies (thanks to $(3)$ of Lemma \ref{Lem:E_r-sGG_map-characterisation}) that they are both $E_r$-exact, i.e. $\{\beta\}_{E_r} = \{\gamma\}_{E_r} = 0$ in $E^{n,\,n-1}_r(X)$. We conclude that $F_r^{\star}$ is injective if $X$ is $E_r$-sGG.

\vspace{1ex}

To prove the reverse implication ``$\Longleftarrow$'', suppose that $F_r^{\star}$ is injective. We will show that every Gauduchon metric on $X$ is $E_r$-sG. This will imply that $X$ is an $E_r$-sGG manifold thanks to $(iii)$ of Definition \ref{Def:sG_definitions}.

Let $\omega$ be any Gauduchon metric on $X$. Then $\partial\omega^{n-1}\in\ker\bar\partial$. Since $\partial(\partial\omega^{n-1})=0$ (trivially), the $(n,\,n-1)$-form $\partial\omega^{n-1}$ is $E_r$-closed for every $r\geq 1$. Therefore, we have a class $\{\partial\omega^{n-1}\}_{E_r}\in E^{n,\,n-1}_r(X)$.

Now, $\partial\omega^{n-1} + \overline{\partial\omega^{n-1}} = d\omega^{n-1}$, so \begin{eqnarray*}F_r^{\star}(\{\partial\omega^{n-1}\}_{E_r},\,\overline{\{\partial\omega^{n-1}\}_{E_r}}) = \{\partial\omega^{n-1} + \overline{\partial\omega^{n-1}}\}_{dR} = \{d\omega^{n-1}\}_{dR} = 0.\end{eqnarray*} Since $F_r^{\star}$ is supposed injective, we infer that $\{\partial\omega^{n-1}\}_{E_r} = 0$, i.e. $\omega$ is $E_r$-sG.  \hfill $\Box$

\vspace{2ex}

In the sequel, we will especially need the following

\begin{Cor}\label{Cor:E_r-sGG_Hodge-decomp} Let $X$ be a compact complex manifold with $\mbox{dim}_\C X=n$. Fix $r\in\N^\star$.

  The following equivalences hold: \begin{eqnarray*}X \hspace{1ex}\mbox{is {\bf $E_r$-sGG}} & \iff & X \hspace{1ex} \mbox{has a {\bf weak Hodge decomposition in degree $1$ at the $E_r$ level}}\\
    & & (\mbox{in the sense that the map $F_r$ of (\ref{eqn:F-map_def}) is {\bf bijective}}) \\
    & \iff & X \hspace{1ex} \mbox{has a {\bf weak Hodge decomposition in degree $2n-1$ at the $E_r$ level}}\\
  & & (\mbox{in the sense that the map $F_r^\star$ of (\ref{eqn:F-star-map_def}) is {\bf bijective}}). \end{eqnarray*}

\end{Cor}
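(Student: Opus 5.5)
This corollary is a direct repackaging of part $(3)$ of Theorem \ref{The:E_r-sGG_Hodge-decomp}, so the plan is to read off the two equivalences from the chain proved there. No new analytic input is needed.

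The second equivalence, between $E_r$-sGG and bijectivity of $F_r^\star$, is precisely the implication pair proved in $(3)$ of that theorem. There, ``$X$ is $E_r$-sGG $\iff F_r^\star$ is injective'' was established directly. On the other side, $F_r^\star$ is always surjective by part $(2)$, being dual to the injective map $F_r$ of part $(1)$. Hence injectivity of $F_r^\star$ is the same as its bijectivity, which gives the second equivalence.

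For the first equivalence, I will use the duality between $F_r$ and $F_r^\star$ established in part $(2)$. The spaces involved are finite-dimensional: de Rham cohomology is finite-dimensional, and $E_r^{p,\,q}(X)$ is a subquotient of $H^{p,\,q}_{\bar\partial}(X)$. Under the Serre-type duality of Theorem \ref{The:duality_E_r} and Poincar\'e duality, $F_r^\star$ is identified with the transpose of $F_r$. A linear map between finite-dimensional spaces is bijective if and only if its transpose is. Hence $F_r$ is bijective if and only if $F_r^\star$ is, and combining this with the previous paragraph yields the first equivalence.

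The only point requiring care is that the domains and targets really are dual in the way used. Concretely, $\overline{E_r^{n,\,n-1}(X)}$ must be paired with $\overline{E_r^{0,\,1}(X)}$ via the conjugate of the Serre-type pairing, and $H^{2n-1}_{dR}$ with $H^1_{dR}$ via Poincar\'e duality. This was already checked in the computation of $\sigma_{\beta+\bar\gamma}=(u+\bar v)\circ F_r$ in the proof of the theorem, so I expect no genuine obstacle.
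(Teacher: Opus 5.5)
Your proposal is correct and follows the paper's route. The paper's proof just says the corollary follows at once from Theorem~\ref{The:E_r-sGG_Hodge-decomp}: part $(3)$ of that theorem already lists these equivalences. Its proof reduces everything, exactly as you unpack it, to ``$X$ is $E_r$-sGG $\iff F_r^\star$ is injective'', together with the facts that $F_r$ is always injective, $F_r^\star$ is always surjective, and the two maps are dual between finite-dimensional spaces.
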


\noindent {\it Proof.} It follows at once from Theorem \ref{The:E_r-sGG_Hodge-decomp}.  \hfill $\Box$

\vspace{2ex}

At the level of the dimensions, we get:

\begin{Cor}\label{Cor:E_r-sGG_b_1_2e_r-01} Let $X$ be a compact complex manifold with $\mbox{dim}_\C X=n$. Fix $r\in\N^\star$.

  The following inequalities hold: \begin{eqnarray*}b_1(X)\leq 2\,e_{r+1}^{0,\,1}(X)\leq 2\,e_r^{0,\,1}(X)\leq\dots\leq 2\,e_1^{0,\,1}(X).\end{eqnarray*}

  Moreover, the equality $b_1(X) = 2\,e_r^{0,\,1}(X)$ holds if and only if $X$ is an {\bf $E_r$-sGG manifold}. Consequently, we have the implications: \begin{eqnarray*}\label{eqn:E_r-sGG_implications}X \hspace{1ex} \mbox{is an {\bf $E_1$-sGG manifold}} \implies X \hspace{1ex} \mbox{is an {\bf $E_2$-sGG manifold}} \implies X \hspace{1ex} \mbox{is an {\bf $E_3$-sGG manifold}}.\end{eqnarray*}

\end{Cor}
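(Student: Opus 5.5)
The plan is to obtain everything from Theorem \ref{The:E_r-sGG_Hodge-decomp} applied at the two consecutive levels $r$ and $r+1$, together with the page structure of the Fr\"olicher spectral sequence in bidegree $(0,\,1)$.

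We first establish the chain $e_{r+1}^{0,\,1}(X)\leq e_r^{0,\,1}(X)\leq\dots\leq e_1^{0,\,1}(X)$. As observed after Definition \ref{Def:E_r-Jacobi-torus}, the differential $d_r$ has bidegree $(r,\,-r+1)$, so the incoming differential $d_r^{-r,\,r}:E_r^{-r,\,r}(X)\to E_r^{0,\,1}(X)$ has zero source. Hence $E_{r+1}^{0,\,1}(X)=\ker d_r^{0,\,1}\subset E_r^{0,\,1}(X)$ for every $r\geq 1$, and the dimensions are non-increasing in $r$. This step is routine.

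Next we prove $b_1(X)\leq 2\,e_{r+1}^{0,\,1}(X)$. This follows from part (1) of Theorem \ref{The:E_r-sGG_Hodge-decomp} applied with $r+1$ in place of $r$. That part holds for every positive integer and on any compact complex manifold, and it states that $F_{r+1}$ is injective. Concatenating this inequality with the chain above gives the displayed inequalities.

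The equivalence between $b_1(X)=2\,e_r^{0,\,1}(X)$ and the $E_r$-sGG property is exactly the last equivalence in part (3) of Theorem \ref{The:E_r-sGG_Hodge-decomp}; it is the dimension form of ``$F_r$ injective plus equal dimensions implies $F_r$ bijective''. For the implications, suppose $X$ is $E_r$-sGG, so $b_1(X)=2\,e_r^{0,\,1}(X)$. The chain gives
\begin{eqnarray*}
b_1(X)\leq 2\,e_{r+1}^{0,\,1}(X)\leq 2\,e_r^{0,\,1}(X)=b_1(X).
\end{eqnarray*}
All terms are therefore equal, so $b_1(X)=2\,e_{r+1}^{0,\,1}(X)$, and $X$ is $E_{r+1}$-sGG by the equivalence just quoted. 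Taking $r=1$ and then $r=2$ yields the stated implications.

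The only point needing care is the inclusion $E_{r+1}^{0,\,1}\subset E_r^{0,\,1}$, which relies on the vanishing of the incoming differential for bidegree reasons. Everything else is a direct citation of Theorem \ref{The:E_r-sGG_Hodge-decomp}, so I do not expect a genuine obstacle.
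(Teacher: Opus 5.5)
Your proposal is correct and follows the paper's route. The paper's proof only says the corollary follows at once from Theorem \ref{The:E_r-sGG_Hodge-decomp}, and you have supplied exactly the ingredients it leaves implicit: injectivity of $F_{r+1}$ from part (1), the inclusion $E_{r+1}^{0,\,1}(X)=\ker d_r^{0,\,1}\subset E_r^{0,\,1}(X)$ (noted in the paper after Definition \ref{Def:E_r-Jacobi-torus}), the dimension criterion in part (3), and the squeeze $b_1(X)\leq 2\,e_{r+1}^{0,\,1}(X)\leq 2\,e_r^{0,\,1}(X)=b_1(X)$ for the implications, which the paper also observes follow from (\ref{eqn:E_r-sG_implications}).
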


\noindent {\it Proof.} It follows at once from Theorem \ref{The:E_r-sGG_Hodge-decomp}.  \hfill $\Box$

\vspace{2ex}

The last conclusion of Corollary \ref{Cor:E_r-sGG_b_1_2e_r-01} has been noticed before and also follows from the implications (\ref{eqn:E_r-sG_implications}) and (iii) of Definition \ref{Def:sG_definitions}. It is the analogue of the implication: \begin{eqnarray}\label{eqn:E_r-page-r_implications}X \hspace{1ex} \mbox{is a {\bf page-$(r-1)$-$\partial\bar\partial$}-manifold} \implies X \hspace{1ex} \mbox{is a {\bf page-$r$-$\partial\bar\partial$}-manifold}, \hspace{5ex} r\geq 1,\end{eqnarray} that was observed in [PSU20b, Introduction III].

\vspace{2ex}

The following consequence of the {\it weak Hodge decomposition in degree $1$ at the $E_r$ level} of $E_r$-sGG manifolds will come in handy later on.

\begin{Cor}\label{Cor:d-closed-rep_E_r_0-1} Let $X$ be a compact complex manifold. Suppose $X$ is {\bf $E_r$-sGG} for some $r\in\N^\star$.

  Then, every class $\mathfrak{c}_r^{0,\,1}\in E_r^{0,\,1}(X)$ can be represented by a unique $C^\infty$ {\bf $d$-closed} $(0,\,1)$-form $\widetilde\alpha^{0,\,1}$.

\end{Cor}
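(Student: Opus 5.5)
Existence follows from the surjectivity of $F_r$ in Theorem \ref{The:E_r-sGG_Hodge-decomp}, which holds because $X$ is $E_r$-sGG. Given $\mathfrak{c}_r^{0,\,1}\in E_r^{0,\,1}(X)$, I will choose a de Rham class $\{\alpha\}_{dR}\in H^1_{dR}(X,\,\C)$ with $F_r(\{\alpha\}_{dR}) = (\mathfrak{c}_r^{0,\,1},\,0)$. Here $\alpha = \alpha^{1,\,0}+\alpha^{0,\,1}$ is $d$-closed, $\{\alpha^{0,\,1}\}_{E_r} = \mathfrak{c}_r^{0,\,1}$, and $\overline{\alpha^{1,\,0}}$ is $E_r$-exact. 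In bidegree $(0,\,1)$, $E_r$-exactness coincides with $\bar\partial$-exactness, so $\overline{\alpha^{1,\,0}} = \bar\partial g$ for a smooth function $g$. Hence $\alpha^{1,\,0} = \partial\bar g$.

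I then set $\widetilde\alpha := \alpha - d\bar g = \alpha^{0,\,1} - \bar\partial\bar g$. This form is $d$-closed and of pure type $(0,\,1)$. Its $E_r$-class is still $\mathfrak{c}_r^{0,\,1}$, because it differs from $\alpha^{0,\,1}$ by a $\bar\partial$-exact, hence $E_r$-exact, form. So $\widetilde\alpha^{0,\,1}:=\widetilde\alpha$ is a $d$-closed representative.

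For uniqueness, suppose $\widetilde\alpha_1,\widetilde\alpha_2$ are two $d$-closed $(0,\,1)$-form representatives of $\mathfrak{c}_r^{0,\,1}$. Their difference $\beta$ is $d$-closed, of type $(0,\,1)$, and $E_r$-exact, hence $\beta = \bar\partial f$ for a smooth function $f$. The condition $d\beta = 0$ gives $\partial\bar\partial f = 0$. By compactness, exactly as in the injectivity part of the proof of Theorem \ref{The:E_r-sGG_Hodge-decomp}, $f$ is constant, so $\beta = 0$. Note that uniqueness does not even use the $E_r$-sGG hypothesis.

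The only point needing care is the claim that $E_r$-exactness in bidegree $(0,\,1)$ reduces to $\bar\partial$-exactness. It was already used in the injectivity part of Theorem \ref{The:E_r-sGG_Hodge-decomp}, and it holds because any further correction term would have bidegree $(-1,\,\cdot)$ and therefore vanishes. So I expect no real obstacle; the main step is simply invoking the surjectivity of $F_r$.
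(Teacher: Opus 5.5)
Your proof is correct and is essentially the paper's argument: you use the surjectivity of $F_r$, remove the unwanted component by subtracting the exact differential of a function, and prove uniqueness with the same $\partial\bar\partial$ argument. The only difference is cosmetic: you take a preimage of $(\mathfrak{c}_r^{0,\,1},\,0)$ instead of the paper's $(0,\,\overline{\mathfrak{c}_r^{0,\,1}})$, which gives the $d$-closed $(0,\,1)$-representative directly and avoids the final complex-conjugation step.
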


\noindent {\it Proof.} Let $\mathfrak{c}_r^{0,\,1}\in E_r^{0,\,1}(X)$. By the $E_r$-sGG assumption on $X$, the map $F_r:H^1_{dR}(X,\,\C)\longrightarrow E_r^{0,\,1}(X)\oplus\overline{E_r^{0,\,1}(X)}$ of (\ref{eqn:F-map_def}) is surjective. In particular, there exists a $C^\infty$ $1$-form $\alpha = \alpha^{1,\,0} + \alpha^{0,\,1}$ such that $d\alpha = 0$ and \begin{eqnarray*}F_r(\{\alpha\}_{dR}) = \bigg(\{\alpha^{0,\,1}\}_{E_r},\,\overline{\{\overline{\alpha^{1,\,0}}}\}_{E_r}\bigg) = \bigg(0,\,\overline{\mathfrak{c}_r^{0,\,1}}\bigg).\end{eqnarray*}

The condition $\{\alpha^{0,\,1}\}_{E_r} = 0\in E_r^{0,\,1}(X)$ means that $\alpha^{0,\,1}$ is $E_r$-exact. For bidegree reasons, this amounts to the existence of a $C^\infty$ function $g$ on $X$ such that $\alpha^{0,\,1} = \bar\partial g$.

Meanwhile, we get: $\mathfrak{c}_r^{0,\,1} = \{\overline{\alpha^{1,\,0}}\}_{E_r}$.

This means that the condition $d\alpha = 0$ reduces in this case to \begin{eqnarray*}\partial\alpha^{1,\,0} = 0 \hspace{3ex}\mbox{and}\hspace{3ex} \partial\bar\partial g + \bar\partial\alpha^{1,\,0} = 0.\end{eqnarray*}

Thus, if we set $\widetilde\alpha^{1,\,0}:= \alpha^{1,\,0} - \partial g\in C^\infty_{1,\,0}(X,\,\C)$, we have:  \begin{eqnarray*}d\widetilde\alpha^{1,\,0} = 0 \hspace{3ex}\mbox{and}\hspace{3ex} \bigg\{\overline{\widetilde\alpha^{1,\,0}}\bigg\}_{E_r} = \bigg\{\overline{\alpha^{1,\,0}}\bigg\}_{E_r} = \mathfrak{c}_r^{0,\,1}.\end{eqnarray*} The last property follows from $\overline{\widetilde\alpha^{1,\,0}} - \overline{\alpha^{1,\,0}} = -\bar\partial\bar{g}\in\mbox{Im}\,\bar\partial$, which implies that $\overline{\widetilde\alpha^{1,\,0}} - \overline{\alpha^{1,\,0}}$ is $E_s$-exact for every $s\geq 1$, hence also for $s=r$.

We see that $\widetilde\alpha^{0,\,1}:=\overline{\widetilde\alpha^{1,\,0}}$ is a $d$-closed $C^\infty$ representative of bidegree $(0,\,1)$ of the original class $\mathfrak{c}_r^{0,\,1}\in E_r^{0,\,1}(X)$. This proves the existence part.

To prove uniqueness, let $\widetilde\alpha^{0,\,1}\in C^\infty_{0,\,1}(X,\,\C)$ such that $d\widetilde\alpha^{0,\,1} = 0$ and $\widetilde\alpha^{0,\,1}$ is $E_r$-exact. For bidegree reasons, this last condition is equivalent to $\widetilde\alpha^{0,\,1} = \bar\partial g$ for some $C^\infty$ function $g$ on $X$. We get: \begin{eqnarray*}0 = \partial\widetilde\alpha^{0,\,1} = \partial\bar\partial g \hspace{5ex} \mbox{on}\hspace{2ex} X.\end{eqnarray*} Since $X$ is compact, $g$ must be constant and we get $\widetilde\alpha^{0,\,1} = \bar\partial g = 0$.  \hfill $\Box$

\subsection{The $E_r$-Jacobian and $E_r$-Albanese tori of $E_r$-sGG manifolds}\label{subsection:E_r-tori_E_r-sGG} We now use the Hodge theory derived for $E_r$-sGG manifolds in $\S$\ref{subsection:Hodge_E_r-sGG} to extend to this class of manifolds the $E_r$-Jacobian and $E_r$-Albanese tori, as well as the $E_r$-Albanese map, constructed for page-$(r-1)$-$\partial\bar\partial$-manifolds in $\S$\ref{subsection:E_r-Jacobi-torus_page_r-1},  $\S$\ref{subsection:E_r-Albanese-torus_page_r-1} and $\S$\ref{subsection:E_r-Albanese-map_page_r-1}. This will implicitly generalise to $E_r$-sGG manifolds the analogous constructions given in [Pop18] for sGG-manifolds.

\vspace{1ex}

Let $X$ be an $n$-dimensional compact complex $E_r$-sGG manifold. We have canonical maps: \begin{eqnarray*}H^1_{dR}(X,\,\R)\xhookrightarrow{j_1}H^1_{dR}(X,\,\C)\xrightarrow[\simeq]{F_r}E_r^{0,\,1}(X)\oplus\overline{E_r^{0,\,1}(X)}\xtwoheadrightarrow{p_r^{0,\,1}}E_r^{0,\,1}(X)\end{eqnarray*} and \begin{eqnarray*}H^{2n-1}_{dR}(X,\,\R)\xhookrightarrow{j_{2n-1}}H^{2n-1}_{dR}(X,\,\C)\xrightarrow[\simeq]{(F_r^\star)^{-1}}E_r^{n,\,n-1}(X)\oplus\overline{E_r^{n,\,n-1}(X)}\xtwoheadrightarrow{p_r^{n-1,\,n}}\overline{E_r^{n,\,n-1}(X)}.\end{eqnarray*} Moreover, the compositions: \begin{eqnarray*}p_r^{0,\,1}\circ F_r\circ j_1 : H^1_{dR}(X,\,\R)\longrightarrow E_r^{0,\,1}(X) \hspace{2ex}\mbox{and}\hspace{2ex} p_r^{n-1,\,n}\circ(F_r^\star)^{-1}\circ j_{2n-1} : H^{2n-1}_{dR}(X,\,\R)\longrightarrow\overline{E_r^{n,\,n-1}(X)}\end{eqnarray*} are {\it isomorphisms} for the same reasons as their counterparts of $\S$\ref{subsection:E_r-Jacobi-torus_page_r-1},  $\S$\ref{subsection:E_r-Albanese-torus_page_r-1} are. Finally, since the images of the canonical maps:  \begin{eqnarray*}J_1:H^1(X,\,\Z)\longrightarrow H^1_{dR}(X,\,\R)   \hspace{2ex}\mbox{and}\hspace{2ex} J_{2n-1}:H^{2n-1}(X,\,\Z)\longrightarrow H^{2n-1}_{dR}(X,\,\R)\end{eqnarray*} are {\it lattices} for any compact complex $n$-dimensional manifold $X$, we get the following

\begin{Cor}\label{Cor:lattice_E_r_sGG} If $X$ is a compact complex {\bf $E_r$-sGG manifold}, the images of the maps \begin{eqnarray*}p_r^{0,\,1}\circ F_r\circ j_1\circ J_1 : H^1(X,\,\Z) & \longrightarrow & E_r^{0,\,1}(X),\\
  p_r^{n-1,\,n}\circ (F_r^\star)^{-1}\circ j_{2n-1}\circ J_{2n-1} : H^{2n-1}(X,\,\Z) &\longrightarrow & \overline{E_r^{n,\,n-1}(X)},\end{eqnarray*} that we denote respectively by $\mbox{Im}^{(r)}H^1(X,\,\Z)$ and $\mbox{Im}^{(r)}H^{2n-1}(X,\,\Z)$, are {\bf lattices} in $E_r^{0,\,1}(X)$, respectively $\overline{E_r^{n,\,n-1}(X)}$.
 
\end{Cor}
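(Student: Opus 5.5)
The plan is to show that each of the two compositions in the statement is an $\R$-linear isomorphism of finite-dimensional real vector spaces. Granting this, the image of $J_1$ (resp. $J_{2n-1}$), which is a lattice in $H^1_{dR}(X,\,\R)$ (resp. $H^{2n-1}_{dR}(X,\,\R)$), is carried to a lattice. Indeed, an $\R$-isomorphism sends a $\Z$-basis of a full-rank discrete subgroup to an $\R$-basis of the target, so the image is again discrete, cocompact and of full rank. Everything therefore reduces to the isomorphism claims made just before the corollary. I would prove these along the lines of the page-(r-1)-$\partial\bar\partial$ case, with $S_r^{(1)}$ replaced by $F_r$ and $S_r^{(2n-1)}$ by $(F_r^\star)^{-1}$.

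For degree $1$, I would first count dimensions. By Theorem \ref{The:E_r-sGG_Hodge-decomp}(3), the $E_r$-sGG hypothesis gives $b_1(X) = 2e_r^{0,\,1}(X)$. Hence $\dim_\R H^1_{dR}(X,\,\R) = b_1 = 2e_r^{0,\,1} = \dim_\R E_r^{0,\,1}(X)$, and it suffices to prove that $p_r^{0,\,1}\circ F_r\circ j_1$ is injective. Let $\alpha$ be a real $d$-closed $1$-form, so that $\alpha^{1,\,0} = \overline{\alpha^{0,\,1}}$ and $\overline{\alpha^{1,\,0}} = \alpha^{0,\,1}$. Then $F_r(\{\alpha\}_{dR}) = (\mathfrak{c},\,\overline{\mathfrak{c}})$ with $\mathfrak{c} = \{\alpha^{0,\,1}\}_{E_r}$. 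Consequently, if the projection $p_r^{0,\,1}$ of this image vanishes, then $F_r(\{\alpha\}_{dR}) = 0$, and the injectivity of $F_r$ (Theorem \ref{The:E_r-sGG_Hodge-decomp}(1)) gives $\{\alpha\}_{dR} = 0$.

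For degree $2n-1$, the same argument applies. We have $\dim_\R H^{2n-1}_{dR}(X,\,\R) = b_{2n-1} = b_1$ by Poincaré duality, and $\dim_\R\overline{E_r^{n,\,n-1}(X)} = 2e_r^{n,\,n-1} = 2e_r^{0,\,1}$ by Theorem \ref{The:duality_E_r}; these agree by the equality above. Now let $\theta$ be a real $d$-closed $(2n-1)$-form. Writing $(F_r^\star)^{-1}(\{\theta\}_{dR}) = (\{\beta\}_{E_r},\,\overline{\{\gamma\}}_{E_r})$, we get $\{\beta+\overline\gamma\}_{dR} = \{\theta\}_{dR} = \overline{\{\theta\}_{dR}} = \{\gamma + \overline\beta\}_{dR}$. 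Since $F_r^\star$ is bijective under the $E_r$-sGG hypothesis (Theorem \ref{The:E_r-sGG_Hodge-decomp}(3)), this gives $\{\gamma\}_{E_r} = \{\beta\}_{E_r}$. Therefore, if the second component vanishes, so does the first, hence $\{\theta\}_{dR} = 0$. This yields injectivity and, by the dimension count, bijectivity.

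The only delicate point is this compatibility of $F_r$ and $F_r^\star$ with complex conjugation. It requires checking that the swap $(\mathfrak{a},\,\overline{\mathfrak{b}})\mapsto(\mathfrak{b},\,\overline{\mathfrak{a}})$ is well defined on classes and intertwines conjugation on de Rham cohomology. This holds because conjugation sends $E_r$-exact $(p,\,q)$-forms to conjugates of $E_r$-exact forms in the sense used to define $\overline{E_r}$, so the identification is purely formal. The lattice conclusion then follows immediately.
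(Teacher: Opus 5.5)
Your proposal is correct and follows the paper's route. The paper likewise asserts that $p_r^{0,\,1}\circ F_r\circ j_1$ and $p_r^{n-1,\,n}\circ(F_r^\star)^{-1}\circ j_{2n-1}$ are $\R$-isomorphisms ``for the same reasons'' as in the page-$(r-1)$-$\partial\bar\partial$ case, then carries the lattices $\mbox{Im}\,J_1$ and $\mbox{Im}\,J_{2n-1}$ across them; your injectivity argument for real classes, together with the dimension count $b_1 = 2e_r^{0,\,1} = 2e_r^{n,\,n-1}$, supplies the details the paper leaves implicit.
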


\vspace{2ex}

On the other hand, the duality of Theorem \ref{The:duality_E_r} for the $E_r$ cohomology holds on any compact complex manifold $X$. In particular, we get a canonical isomorphism:  \begin{eqnarray*}E_r^{n,\,n-1}(X)\simeq E_r^{0,\,1}(X)^\star.\end{eqnarray*}

This enables us to introduce the following (cf. Definitions \ref{Def:E_r-Jacobi-torus} and \ref{Def:E_r-Albanese-torus}):

\begin{Def}\label{Def:E_r-Jacobi-Albanese-tori_E_r-sGG} Let $r\in\N^\star$ and let $X$ be a compact complex {\bf $E_r$-sGG manifold} with $\mbox{dim}_\C X = n$. Set $g_r:=\mbox{dim}_\C E_r^{0,\,1}(X) = (1/2)\,b_1(X)$. The $g_r$-dimensional complex torus \begin{eqnarray*}Jac_r(X):= E_r^{0,\,1}(X)\Big/\mbox{Im}^{(r)}H^1(X,\,\Z)\end{eqnarray*} is called the {\bf $E_r$-Jacobian torus} of $X$, while the $g_r$-dimensional complex torus \begin{eqnarray*}Alb_r(X):= \overline{E_r^{n,\,n-1}(X)}\Big/\mbox{Im}^{(r)}H^{2n-1}(X,\,\Z)\end{eqnarray*} is called the {\bf $E_r$-Albanese torus} of $X$.

\end{Def}

\vspace{2ex}

We now give the analogue in the more general $E_r$-sGG context of (iii) of Remark \ref{Rem:g_r_tori-duality}. It is based on the replacement of the existence of a $d$-closed $(p,\,q)$-type representative of every $E_r^{p,\,q}$-cohomology class (one of the key features of {\it page-(r-1)-$\partial\bar\partial$-manifolds}) by the existence of a {\bf canonical injection} \begin{eqnarray*}G_r:\overline{E_r^{0,\,1}(X)}\longrightarrow H^1_{dR}(X,\,\C)\end{eqnarray*} made possible by the {\it $E_r$-sGG assumption}.

\begin{Rem}\label{Rem:E_r-Albanese-torus_sGG_re} In the setting of Definition \ref{Def:E_r-Jacobi-Albanese-tori_E_r-sGG}, the {\bf $E_r$-Albanese torus} can be re-interpreted as \begin{eqnarray*}Alb_r(X)=\bigg(\overline{E_r^{0,\,1}(X)}\bigg)^\star\Big/\mbox{Im}^{(r)}H_1(X,\,\Z),\end{eqnarray*} where $\mbox{Im}^{(r)}H_1(X,\,\Z)$ is the image of the first homology group $H_1(X,\,\Z)$ under the canonical map: \begin{eqnarray}\label{eqn:map_H_1_E_r_0-1_bar_star_E_r-sGG}H_1(X,\,\Z)\longrightarrow\bigg(\overline{E_r^{0,\,1}(X)}\bigg)^\star, \hspace{5ex} [\xi]\longmapsto\widetilde{I}_\xi:=\bigg(\overline{E_r^{0,\,1}(X)}\ni\overline{\{\alpha^{0,\,1}\}_{E_r}}\longmapsto\int_\xi\overline{\{\alpha^{0,\,1}\}_{dR}}\in\C\bigg).\end{eqnarray}

  By $\overline{\{\alpha^{0,\,1}\}_{dR}}\in H^1_{dR}(X,\,C)$ we mean the image of the class $\overline{\{\alpha^{0,\,1}\}_{E_r}}\in\overline{E_r^{0,\,1}(X)}$ under the composition $G_r:=F_r^{-1}\circ\overline{j_r^{0,\,1}}$ of the following canonical linear maps: \begin{eqnarray}\label{eqn:composition_F_r_inverse}\overline{E_r^{0,\,1}(X)}\xhookrightarrow{\overline{j_r^{0,\,1}}} E_r^{0,\,1}(X)\oplus\overline{E_r^{0,\,1}(X)}\xrightarrow[\simeq]{F_r^{-1}}H^1_{dR}(X,\,\C),\end{eqnarray} where $\overline{j_r^{0,\,1}}$ is the canonical injection of a vector space into its direct sum with another and $F_r^{-1}$ is the inverse of the {\bf weak Hodge decomposition} isomorphism of (\ref{eqn:F-map_def}) that characterises {\bf $E_r$-sGG manifolds}. As in (iii) of Remark \ref{Rem:g_r_tori-duality}, the last integral in (\ref{eqn:map_H_1_E_r_0-1_bar_star_E_r-sGG}) is well defined once a homology class $[\xi]\in H_1(X,\,\Z)$ has been fixed since the current of integration $I_\xi$ on a loop $\xi$  is $d$-closed.

\end{Rem}

\subsection{The $E_r$-Albanese map of an $E_r$-sGG manifold}\label{subsection:E_r-Albanese-map_E_r-sGG} The definitions and results in this subsection are very similar to their {\it page-(r-1)-$\partial\bar\partial$}-counterparts of $\S$\ref{subsection:E_r-Albanese-map_page_r-1}. We will only point out the differences.

\vspace{1ex}

The only difference between the next definition and its {\it page-(r-1)-$\partial\bar\partial$}-counterpart of Definition \ref{Def:E_r-Albanese-map} is, once again, the replacement of a $d$-closed $(0,\,1)$-representative of every class in $E_r^{0,\,1}(X)$ (which need not exist for $E_r$-sGG manifolds) by the canonical image of $\overline{E_r^{0,\,1}(X)}$ in $H^1_{dR}(X,\,\C)$.

\begin{Def}\label{Def:E_r-Albanese-map_E_r-sGG} Let $r\in\N^\star$ and let $X$ be a compact complex {\bf $E_r$-sGG manifold} with $\mbox{dim}_\C X =n$. Fix a base point $a\in X$. For every point $x\in X$, let $\xi_x$ be a path in $X$ from $a$ to $x$ and let $\widetilde{I}_{\xi_x}\in\bigg(\overline{E_r^{0,\,1}(X)}\bigg)^\star$ be the linear functional defined by \begin{eqnarray}\label{eqn:map_H_1_E_r_0-1_bar_star_paths_E_r-sGG}\overline{E_r^{0,\,1}(X)}\ni\overline{\mathfrak{c}}\xmapsto{\widetilde{I}_{\xi_x}}\int_{\xi_x}\alpha({\overline{\mathfrak{c}}})\in\C,\end{eqnarray} where, for each class $\mathfrak{c}\in E_r^{0,\,1}(X)$, $\alpha({\overline{\mathfrak{c}}})$ denotes a $1$-form on $X$ representing the de Rham class $\overline{\mathfrak{c}}_{dR} = G_r(\overline{\mathfrak{c}})\in H^1_{dR}(X,\,\C)$, the image of $\overline{\mathfrak{c}}\in\overline{E_r^{0,\,1}(X)}$ under the canonical injection of (\ref{eqn:composition_F_r_inverse}): \begin{eqnarray*}G_r=F_r^{-1}\circ\overline{j_r^{0,\,1}}:\overline{E_r^{0,\,1}(X)}\longrightarrow H^1_{dR}(X,\,\C).\end{eqnarray*}
  
 The canonical map: \begin{eqnarray}\label{eqn:E_r-Albanese-map_E_r-sGG}A_r:X\longrightarrow Alb_r(X)=\bigg(\overline{E_r^{0,\,1}(X)}\bigg)^\star\Big/\mbox{Im}^{(r)}H_1(X,\,\Z), \hspace{5ex} x\longmapsto \widetilde{I}_{\xi_x} \hspace{3ex} \mbox{mod}\hspace{1ex} \mbox{Im}^{(r)}H_1(X,\,\Z),\end{eqnarray} is called the {\bf $E_r$-Albanese map} of $X$.

\end{Def}

\vspace{2ex}

As with Definition \ref{Def:E_r-Albanese-map}, the integral \begin{eqnarray*}\int_{\xi_x}\alpha({\overline{\mathfrak{c}}}) = \int\limits_X I_{\xi_x}\wedge\alpha({\overline{\mathfrak{c}}})\end{eqnarray*} depends on the choice of path $\xi_x$ from $a$ to $x$ despite the fact that $\alpha({\overline{\mathfrak{c}}})$ is necessarily $d$-closed (as a representative of a de Rham class). The integral also depends on the choice of this representative. Nevertheless, the $E_r$-Albanese map $A_r:X\longrightarrow Alb_r(X)$ is well defined for the same reason as in Definition \ref{Def:E_r-Albanese-map}: the ambiguity in the definition of the above integral is captured by $H_1(X,\,\Z)$, whose image is quotiented out.

\vspace{2ex}

Next, we point out that Proposition \ref{Prop:E_r-Albanese-map_hol}, too, generalises to the $E_r$-sGG case with only one modification: the vector space $E_r^{1,\,0}(X)$ has to be replaced by \begin{eqnarray}\label{eqn:cal-E_r-1_def}{\cal E}^1_r(X):=G_r\bigg(\overline{E_r^{0,\,1}(X)}\bigg)\subset H^1_{dR}(X,\,\C),\end{eqnarray} where $G_r=F_r^{-1}\circ\overline{j_r^{0,\,1}}:\overline{E_r^{0,\,1}(X)}\longrightarrow H^1_{dR}(X,\,\C)$ is the canonical injection of (\ref{eqn:composition_F_r_inverse}).

In particular, whenever $X$ is an {\bf $E_r$-sGG manifold}, we have a {\bf canonical isomorphism}: \begin{eqnarray}\label{eqn:cal-E_r-1_can-isom}\overline{E_r^{0,\,1}(X)}\simeq{\cal E}^1_r(X)\subset H^1_{dR}(X,\,\C).\end{eqnarray}

Corollary \ref{Cor:d-closed-rep_E_r_0-1} can be reformulated as

\begin{Cor}\label{Cor:cal-E_r-1_1-0_rep} Let $X$ be a compact complex manifold. Suppose $X$ is {\bf $E_r$-sGG} for some $r\in\N^\star$.

  Then, every de Rham class $\mathfrak{c}^{(r)}_{dR}\in{\cal E}^1_r(X)\subset H^1_{dR}(X,\,\C)$ can be represented by a unique $C^\infty$ (necessarily $d$-closed) $(1,\,0)$-form $\widehat{\alpha^{1,\,0}}$.

\end{Cor}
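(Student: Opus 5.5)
We unwind the definitions and reduce the statement to Corollary \ref{Cor:d-closed-rep_E_r_0-1} by complex conjugation. Let $\mathfrak{c}^{(r)}_{dR}\in{\cal E}^1_r(X)$. By (\ref{eqn:cal-E_r-1_def}), there is a class $\mathfrak{c}\in E_r^{0,\,1}(X)$ with $\mathfrak{c}^{(r)}_{dR}=G_r(\overline{\mathfrak{c}})=F_r^{-1}(0,\,\overline{\mathfrak{c}})$. This uses the bijectivity of $F_r$ granted by the $E_r$-sGG hypothesis via Theorem \ref{The:E_r-sGG_Hodge-decomp}. By Corollary \ref{Cor:d-closed-rep_E_r_0-1}, $\mathfrak{c}$ has a unique $d$-closed $(0,\,1)$-representative $\widetilde\alpha^{0,\,1}$. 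We set $\widehat{\alpha^{1,\,0}}:=\overline{\widetilde\alpha^{0,\,1}}$, which is a $d$-closed $(1,\,0)$-form.

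\emph{Existence.} We must check that $\{\widehat{\alpha^{1,\,0}}\}_{dR}=\mathfrak{c}^{(r)}_{dR}$. By injectivity of $F_r$, it suffices to compute $F_r(\{\widehat{\alpha^{1,\,0}}\}_{dR})$ directly from (\ref{eqn:F-map_def}). The $(0,\,1)$-part of $\widehat{\alpha^{1,\,0}}$ is zero, so the first component is $0$. The conjugate of the $(1,\,0)$-part is $\widetilde\alpha^{0,\,1}$, so the second component is $\overline{\{\widetilde\alpha^{0,\,1}\}_{E_r}}=\overline{\mathfrak{c}}$. Hence $F_r(\{\widehat{\alpha^{1,\,0}}\}_{dR})=(0,\,\overline{\mathfrak{c}})$, which is exactly the required identity. (This is also how the existence part of the proof of Corollary \ref{Cor:d-closed-rep_E_r_0-1} was built.)

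\emph{Uniqueness.} Suppose two $(1,\,0)$-forms represent the same de Rham class. Each is $d$-closed, being a representative of a de Rham class, so their difference $\beta^{1,\,0}$ is $d$-closed and $d$-exact, say $\beta^{1,\,0}=df$ for a smooth function $f$. Comparing bidegrees gives $\bar\partial f=0$, so $f$ is holomorphic on the compact manifold $X$, hence constant. Therefore $\beta^{1,\,0}=0$. Alternatively, one can conjugate and invoke the uniqueness part of Corollary \ref{Cor:d-closed-rep_E_r_0-1}, or (i) of Lemma \ref{Lem:E_r_1-0_singleton_ker}.

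The only step that needs care is the bookkeeping of conjugations in the definition of $F_r$: the second slot is $\overline{\{\overline{\alpha^{1,\,0}}\}_{E_r}}$, and we must confirm that this lands on $\overline{\mathfrak{c}}$ rather than on its conjugate. Everything else is routine.
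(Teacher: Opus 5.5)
Your proof is correct and follows the paper's argument. For existence, you conjugate the $d$-closed $(0,\,1)$-representative supplied by Corollary \ref{Cor:d-closed-rep_E_r_0-1}; for uniqueness, you note that a $d$-exact $(1,\,0)$-form $df$ forces $\bar\partial f=0$, hence $f$ is constant. Your check that $F_r(\{\widehat{\alpha^{1,\,0}}\}_{dR})=(0,\,\overline{\mathfrak{c}})$, which shows the conjugated form really represents $G_r(\overline{\mathfrak{c}})$, is correct and makes explicit a step the paper leaves implicit.
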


\noindent {\it Proof.} Every class in ${\cal E}^1_r(X)$ is of the form $\mathfrak{c}^{(r)}_{dR} = G_r(\overline{\mathfrak{c}_r^{0,\,1}})$ for a unique class $\mathfrak{c}_r^{0,\,1}\in E_r^{0,\,1}(X)$. Now, by Corollary \ref{Cor:d-closed-rep_E_r_0-1}, every class of the latter type can be represented by a $C^\infty$ {\bf $d$-closed} $(0,\,1)$-form $\widetilde\alpha^{0,\,1}$.

Thus, we can choose \begin{eqnarray*}\widehat{\alpha^{1,\,0}}:=\overline{\widetilde\alpha^{0,\,1}}.\end{eqnarray*} This proves existence.

To prove uniqueness, let $\widehat{\alpha^{1,\,0}}\in C^\infty_{1,\,0}(X,\,\C)$ be such that $\widehat{\alpha^{1,\,0}} = df$ for some $C^\infty$ function $f$ on $X$. Since $\widehat{\alpha^{1,\,0}}$ is of bidegree $(1,\,0)$, we get $\bar\partial f = 0$ on $X$. By compactness of $X$, $f$ must be constant, so $\widehat{\alpha^{1,\,0}} = \partial f = 0$. \hfill $\Box$

\vspace{2ex}

Conversely, we have the following

\begin{Cor}\label{Cor:cal-E_r-1_1-0_rep_converse} Let $X$ be a compact complex manifold. Suppose $X$ is {\bf $E_r$-sGG} for some $r\in\N^\star$.

Every $C^\infty$ $d$-closed $(1,\,0)$-form $\widehat{\alpha^{1,\,0}}$ on $X$ represents a de Rham class $\mathfrak{c}^{(r)}_{dR}\in{\cal E}^1_r(X)\subset H^1_{dR}(X,\,\C)$.

\end{Cor}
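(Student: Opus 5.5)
We have to show that if $\widehat{\alpha^{1,\,0}}$ is a $C^\infty$ $(1,\,0)$-form with $d\widehat{\alpha^{1,\,0}}=0$, then its de Rham class equals $G_r(\overline{\mathfrak{c}})$ for some $\mathfrak{c}\in E_r^{0,\,1}(X)$. Since $F_r$ is bijective when $X$ is $E_r$-sGG (Theorem \ref{The:E_r-sGG_Hodge-decomp}), we have $G_r=F_r^{-1}\circ\overline{j_r^{0,\,1}}$. So it suffices to compute $F_r$ on the class $\{\widehat{\alpha^{1,\,0}}\}_{dR}$ and check that its first component vanishes.

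First, set $\alpha:=\widehat{\alpha^{1,\,0}}$, viewed as a $1$-form. Its $(0,\,1)$-part is $\alpha^{0,\,1}=0$ and its $(1,\,0)$-part is $\widehat{\alpha^{1,\,0}}$ itself. The closedness $d\alpha=0$ means $\partial\widehat{\alpha^{1,\,0}}=0$ and $\bar\partial\widehat{\alpha^{1,\,0}}=0$. Hence the conjugate $\overline{\widehat{\alpha^{1,\,0}}}$ is a $d$-closed $(0,\,1)$-form. In particular it is $E_r$-closed for every $r$, as shown in the proof of (1) of Theorem \ref{The:E_r-sGG_Hodge-decomp}, so it defines a class $\mathfrak{c}:=\{\overline{\widehat{\alpha^{1,\,0}}}\}_{E_r}\in E_r^{0,\,1}(X)$.

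Next, by the definition (\ref{eqn:F-map_def}),
\begin{eqnarray*}F_r(\{\alpha\}_{dR}) = \bigg(\{0\}_{E_r},\,\overline{\{\overline{\widehat{\alpha^{1,\,0}}}\}_{E_r}}\bigg) = (0,\,\overline{\mathfrak{c}}) = \overline{j_r^{0,\,1}}(\overline{\mathfrak{c}}).\end{eqnarray*}
Applying $F_r^{-1}$ to both sides gives $\{\widehat{\alpha^{1,\,0}}\}_{dR}=G_r(\overline{\mathfrak{c}})\in{\cal E}^1_r(X)$, which is the claim.

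There is no real obstacle here. The only point requiring care is that the $E_r$-sGG hypothesis is what makes $F_r^{-1}$, and therefore $G_r$ and ${\cal E}^1_r(X)$, meaningful; well-definedness of $F_r$ on de Rham classes was already established in Theorem \ref{The:E_r-sGG_Hodge-decomp}. Combined with Corollary \ref{Cor:cal-E_r-1_1-0_rep}, this identifies ${\cal E}^1_r(X)$ with the space of $d$-closed $(1,\,0)$-forms.
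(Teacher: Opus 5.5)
Your proposal is correct and follows the paper's proof. Both arguments take $\mathfrak{c}=\{\overline{\widehat{\alpha^{1,\,0}}}\}_{E_r}\in E_r^{0,\,1}(X)$, which is well defined because the conjugate form is $d$-closed, and conclude that $\{\widehat{\alpha^{1,\,0}}\}_{dR}=G_r(\overline{\mathfrak{c}})$. You also make explicit the step the paper leaves implicit, namely the computation $F_r(\{\widehat{\alpha^{1,\,0}}\}_{dR})=(0,\,\overline{\mathfrak{c}})=\overline{j_r^{0,\,1}}(\overline{\mathfrak{c}})$ followed by applying $F_r^{-1}$.
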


\noindent {\it Proof.} Let $\widehat{\alpha^{1,\,0}}$ be a form as in the statement. Then $\overline{\widehat{\alpha^{1,\,0}}}\in C^\infty_{0,\,1}(X,\,\C)$ and $d\overline{\widehat{\alpha^{1,\,0}}} = 0$. The last property means that $\overline{\widehat{\alpha^{1,\,0}}}$ is both $\partial$- and $\bar\partial$-closed, hence $E_r$-closed. Therefore, $\overline{\widehat{\alpha^{1,\,0}}}$ represents a class $\bigg\{\overline{\widehat{\alpha^{1,\,0}}}\bigg\}_{E_r}\in E_r^{0,\,1}(X)$. This implies that the original form $\widehat{\alpha^{1,\,0}}$ represents the class \begin{eqnarray*}\mathfrak{c}^{(r)}_{dR}:=G_r\bigg(\overline{\bigg\{\overline{\widehat{\alpha^{1,\,0}}}\bigg\}_{E_r}}\bigg)\in G_r\bigg(\overline{E_r^{0,\,1}(X)}\bigg) = {\cal E}^1_r(X)\subset H^1_{dR}(X,\,\C).\end{eqnarray*} \hfill $\Box$

\vspace{2ex}

The conclusion of the last two corollaries is the following

\begin{Cor}\label{Cor:cal-E-1-r_d-closed_1-0_canonical} Let $X$ be a compact complex manifold. Suppose $X$ is {\bf $E_r$-sGG} for some $r\in\N^\star$. 

\vspace{1ex}

(i)\, ${\cal E}^1_r(X)$ identifies canonically with the $\C$-vector space of $d$-closed $C^\infty$ $(1,\,0)$-forms on $X$. 

This identification maps each class $\mathfrak{c}^{(r)}_{dR}\in{\cal E}^1_r(X)$ to its unique $d$-closed $C^\infty$ $(1,\,0)$-representative.

\vspace{1ex}

(ii)\, The canonical linear map \begin{eqnarray*}{\cal E}^1_r(X)\ni\{\widehat{\alpha^{1,\,0}}\}_{dR}\longmapsto[\widehat{\alpha^{1,\,0}}]_{\bar\partial}\in H^{1,\,0}_{\bar\partial}(X,\,\C)\end{eqnarray*} is well defined and {\bf injective}.

\end{Cor}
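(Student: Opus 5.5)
For (i), we combine Corollaries \ref{Cor:cal-E_r-1_1-0_rep} and \ref{Cor:cal-E_r-1_1-0_rep_converse}. Let ${\cal Z}^{1,\,0}_d(X)$ be the $\C$-vector space of $d$-closed $C^\infty$ $(1,\,0)$-forms on $X$. We define $\Phi_r:{\cal E}^1_r(X)\to{\cal Z}^{1,\,0}_d(X)$ by sending a class $\mathfrak{c}^{(r)}_{dR}$ to its unique $(1,\,0)$-type representative $\widehat{\alpha^{1,\,0}}$. Existence and uniqueness of this representative are given by Corollary \ref{Cor:cal-E_r-1_1-0_rep}, and any form of pure type $(1,\,0)$ representing a de Rham class is automatically $d$-closed.

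We first check that $\Phi_r$ is $\C$-linear. The sum (or scalar multiple) of the $(1,\,0)$-representatives is still a $(1,\,0)$-form representing the sum (or multiple) of the classes, so the uniqueness in Corollary \ref{Cor:cal-E_r-1_1-0_rep} gives linearity. Injectivity is immediate: if $\widehat{\alpha^{1,\,0}}=0$ then the class is zero. Surjectivity is exactly Corollary \ref{Cor:cal-E_r-1_1-0_rep_converse}, which says every $d$-closed $(1,\,0)$-form represents a class in ${\cal E}^1_r(X)$; uniqueness again makes it the image of that class. Hence $\Phi_r$ is a canonical isomorphism, and by construction its inverse is $\widehat{\alpha^{1,\,0}}\mapsto\{\widehat{\alpha^{1,\,0}}\}_{dR}$.

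For (ii), we use (i) to write each class in ${\cal E}^1_r(X)$ uniquely as $\{\widehat{\alpha^{1,\,0}}\}_{dR}$ with $\widehat{\alpha^{1,\,0}}$ $d$-closed. In particular $\bar\partial\widehat{\alpha^{1,\,0}}=0$, so $[\widehat{\alpha^{1,\,0}}]_{\bar\partial}\in H^{1,\,0}_{\bar\partial}(X,\,\C)$ is defined. The map is well defined because the representative is unique, and it is linear because $\Phi_r$ is.

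For injectivity, we use that in bidegree $(1,\,0)$ every Dolbeault class is a singleton, since $\bar\partial$-exact $(1,\,0)$-forms would have to be $\bar\partial$ of a $(1,\,-1)$-form and are therefore $0$. So $[\widehat{\alpha^{1,\,0}}]_{\bar\partial}=0$ forces $\widehat{\alpha^{1,\,0}}=0$, and hence the de Rham class vanishes. Equivalently, the map in (ii) is the composition of the isomorphism $\Phi_r$ with the inclusion ${\cal Z}^{1,\,0}_d(X)\subset\ker\bar\partial\cap C^\infty_{1,\,0}(X,\,\C)=H^{1,\,0}_{\bar\partial}(X,\,\C)$.

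No step is a real obstacle. The only point needing care is the linearity of $\Phi_r$, which follows from the uniqueness statement in Corollary \ref{Cor:cal-E_r-1_1-0_rep}.
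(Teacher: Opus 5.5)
Your proposal is correct and follows the paper's proof: (i) is obtained by combining Corollaries \ref{Cor:cal-E_r-1_1-0_rep} and \ref{Cor:cal-E_r-1_1-0_rep_converse}, and (ii) uses uniqueness of the $d$-closed $(1,\,0)$-representative for well-definedness and the fact that a $\bar\partial$-exact $(1,\,0)$-form is zero for injectivity. Your explicit check that the identification is linear, via the uniqueness statement, is a detail the paper leaves implicit.
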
  

\noindent {\it Proof.} (i) follows at once from Corollaries \ref{Cor:cal-E_r-1_1-0_rep} and \ref{Cor:cal-E_r-1_1-0_rep_converse}.

\vspace{1ex}

(ii)\, If $\widehat{\alpha^{1,\,0}}$ is a smooth $(1,\,0)$-form such that $d\widehat{\alpha^{1,\,0}} = 0$, then $\partial\widehat{\alpha^{1,\,0}} = 0$ and $\bar\partial\widehat{\alpha^{1,\,0}} = 0$. In particular, $\widehat{\alpha^{1,\,0}}$ defines a Dolbeault class $[\widehat{\alpha^{1,\,0}}]_{\bar\partial}\in H^{1,\,0}_{\bar\partial}(X,\,\C)$. Moreover, by Corollary \ref{Cor:cal-E_r-1_1-0_rep}, $\widehat{\alpha^{1,\,0}}$ is the only representative with the above properties of its class in ${\cal E}^1_r(X)$. So, the map is well defined.

The map is injective because if $[\widehat{\alpha^{1,\,0}}]_{\bar\partial} = 0\in H^{1,\,0}_{\bar\partial}(X,\,\C)$, the $(1,\,0)$-form $\widehat{\alpha^{1,\,0}}$ must be $\bar\partial$-exact, hence zero. \hfill $\Box$

\vspace{2ex}

We can now give the generalisation to $E_r$-sGG manifolds of Proposition \ref{Prop:E_r-Albanese-map_hol}.

\begin{Prop}\label{Prop:E_r-Albanese-map_hol_E_r_sGG} Let $r\in\N^\star$ and let $X$ be a compact complex {\bf $E_r$-sGG manifold} with $\mbox{dim}_\C X =n$. Fix an arbitrary $\C$-basis of ${\cal E}^1_r(X)$: \begin{eqnarray*}\bigg\{\{\widehat{\alpha^{1,\,0}_1}\}_{dR}, \dots , \{\widehat{\alpha^{1,\,0}_{g_r}}\}_{dR}\bigg\},\end{eqnarray*} where $\widehat{\alpha^{1,\,0}_1},\dots , \widehat{\alpha^{1,\,0}_{g_r}}$ are the unique $d$-closed $C^\infty$ representatives of {\bf bidegree $(1,\,0)$} of these classes.

 Then, under the canonical identification (\ref{eqn:cal-E_r-1_can-isom}), the {\bf $E_r$-Albanese map} of $X$ is given by:  \begin{eqnarray}\label{eqn:E_r-Albanese-map_coordinates_E_r_sGG}\nonumber A_r:X &\longrightarrow & Alb_r(X)=\bigg({\cal E}^1_r(X)\bigg)^\star\Big/\mbox{Im}^{(r)}H_1(X,\,\Z)\simeq\C^{g_r}\Big/\Lambda, \\
    x & \longmapsto & \bigg(\int_{\xi_x}\widehat{\alpha^{1,\,0}_1},\dots , \int_{\xi_x}\widehat{\alpha^{1,\,0}_{g_r}}\bigg)   \hspace{3ex} \mbox{mod}\hspace{1ex} \Lambda,\end{eqnarray} where the lattice $\Lambda\subset\C^{g_r}$ is the group of periods of the $1$-forms $\widehat{\alpha^{1,\,0}_1},\dots , \widehat{\alpha^{1,\,0}_{g_r}}$, namely \begin{eqnarray*}\Lambda:=\bigg\{\bigg(\int_\gamma\widehat{\alpha^{1,\,0}_1},\dots , \int_\gamma\widehat{\alpha^{1,\,0}_{g_r}}\bigg)\,\bigg|\,[\gamma]\in\pi_1(X,\,a)\bigg\},\end{eqnarray*} and $\xi_x$ is any path in $X$ from the base point $a$ to $x$.

  In particular, the {\bf $E_r$-Albanese map} $A_r:X\longrightarrow Alb_r(X)$ is {\bf holomorphic}. 

\end{Prop}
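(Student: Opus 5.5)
The plan is to unwind Definition \ref{Def:E_r-Albanese-map_E_r-sGG} through the identification (\ref{eqn:cal-E_r-1_can-isom}) and then copy the proof of Proposition \ref{Prop:E_r-Albanese-map_hol}. Under $G_r$, the space $\overline{E_r^{0,\,1}(X)}$ is canonically isomorphic to ${\cal E}^1_r(X)$. Hence $\big(\overline{E_r^{0,\,1}(X)}\big)^\star\simeq\big({\cal E}^1_r(X)\big)^\star$.

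For a class $\overline{\mathfrak{c}}$, the functional $\widetilde{I}_{\xi_x}$ of (\ref{eqn:map_H_1_E_r_0-1_bar_star_paths_E_r-sGG}) evaluates $\int_{\xi_x}\alpha(\overline{\mathfrak{c}})$, where $\alpha(\overline{\mathfrak{c}})$ is any representative of $G_r(\overline{\mathfrak{c}})$. By Corollary \ref{Cor:cal-E-1-r_d-closed_1-0_canonical} (i), we may take this representative to be the unique $d$-closed $(1,\,0)$-form $\widehat{\alpha^{1,\,0}}$ of the class. Changing the representative or the path only modifies the value by a period, as explained after Definition \ref{Def:E_r-Albanese-map_E_r-sGG}.

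Next, dualise the chosen basis $\{\widehat{\alpha^{1,\,0}_j}\}_{dR}$ of ${\cal E}^1_r(X)$. This gives a linear isomorphism $\big({\cal E}^1_r(X)\big)^\star\simeq\C^{g_r}$, sending a functional $\ell$ to $(\ell(\{\widehat{\alpha^{1,\,0}_j}\}_{dR}))_j$. Under it, $\widetilde{I}_{\xi_x}$ becomes the vector $\big(\int_{\xi_x}\widehat{\alpha^{1,\,0}_j}\big)_j$.

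The same isomorphism carries $\mbox{Im}^{(r)}H_1(X,\,\Z)$ of (\ref{eqn:map_H_1_E_r_0-1_bar_star_E_r-sGG}) onto the set of vectors $\big(\int_\gamma\widehat{\alpha^{1,\,0}_j}\big)_j$ with $\gamma$ a loop. Since $H_1(X,\,\Z)$ is the abelianisation of $\pi_1(X,\,a)$ and these integrals are additive, this set equals $\Lambda$ as defined in the statement. It is a lattice because Corollary \ref{Cor:lattice_E_r_sGG}, together with the duality used to define $Alb_r(X)$ in Remark \ref{Rem:E_r-Albanese-torus_sGG_re}, identifies it with $\mbox{Im}^{(r)}H^{2n-1}(X,\,\Z)$. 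This yields formula (\ref{eqn:E_r-Albanese-map_coordinates_E_r_sGG}).

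For holomorphicity, each $\widehat{\alpha^{1,\,0}_j}$ is a $d$-closed $(1,\,0)$-form, so $\bar\partial\widehat{\alpha^{1,\,0}_j}=0$ and it is a holomorphic $1$-form. On a small simply connected open set $U\ni x_0$ we can write $\widehat{\alpha^{1,\,0}_j}=df_j$. Comparing the $(0,\,1)$-parts gives $\bar\partial f_j=0$, so $f_j$ is holomorphic.

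Now choose the paths $\xi_z$, $z\in U$, as a fixed path from $a$ to $x_0$ followed by a path inside $U$. Then $A_r(z)=\big(C_j+f_j(z)-f_j(x_0)\big)_j$ mod $\Lambda$. This local lift is holomorphic, so $A_r$ is holomorphic.

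The only point that needs care is the first step: the identification of $\mbox{Im}^{(r)}H_1(X,\,\Z)$ with $\Lambda$ must be compatible with the way $Alb_r(X)$ was defined via $\overline{E_r^{n,\,n-1}(X)}$ and Remark \ref{Rem:E_r-Albanese-torus_sGG_re}. Concretely, the functionals $\widetilde{I}_\gamma$ must use the $G_r$-images, which is exactly how they are defined there, and then the check is routine. The remaining steps follow Proposition \ref{Prop:E_r-Albanese-map_hol} verbatim.
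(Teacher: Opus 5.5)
Your proposal is correct and follows the paper's route. The paper's proof simply reduces to Proposition \ref{Prop:E_r-Albanese-map_hol}: each $\widehat{\alpha^{1,\,0}_j}$, being smooth, $d$-closed and of type $(1,\,0)$, is locally $df_j$ with $f_j$ holomorphic. Your extra bookkeeping that matches $\mbox{Im}^{(r)}H_1(X,\,\Z)$ with $\Lambda$ is fine.

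One justification needs correcting. Replacing the representative $\alpha(\overline{\mathfrak{c}})$ by $\alpha(\overline{\mathfrak{c}})+dh$ shifts $\int_{\xi_x}\alpha(\overline{\mathfrak{c}})$ by $h(x)-h(a)$. This is not a period: only the dependence on the path is absorbed by $H_1(X,\,\Z)$. So $A_r$ is well defined, and holomorphic, only because you \emph{use} the unique $d$-closed $(1,\,0)$-representatives of Corollary \ref{Cor:cal-E_r-1_1-0_rep}. That is a normalising choice, not a consequence of invariance; the remark after Definition \ref{Def:E_r-Albanese-map_E_r-sGG} is loose on the same point.
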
  

\noindent {\it Proof.} The proof is the same as that of Proposition \ref{Prop:E_r-Albanese-map_hol} after we replace the forms $\alpha_1^{1,\,0},\dots , \alpha_{g_r}^{1,\,0}$ by the forms $\widehat{\alpha^{1,\,0}_1},\dots , \widehat{\alpha^{1,\,0}_{g_r}}$. Indeed, since each $\widehat{\alpha^{1,\,0}_j}$ is smooth, $d$-closed and of bidegree $(1,\,0)$, it is locally of the form $\widehat{\alpha^{1,\,0}_j} = df_j$ for some locally defined {\it holomorphic} function $f_j$. \hfill $\Box$

\vspace{2ex}

We can also give the generalisation to $E_r$-sGG manifolds of Corollary \ref{Cor:A_r_surjective-submersion_c-par-page-ddbar}.

\begin{Cor}\label{Cor:A_r_surjective-submersion_c-par-E_r-sGG} If $X$ is a compact {\bf complex parallelisable} {\bf $E_r$-sGG manifold}, its $E_r$-Albanese map $A_r:X\longrightarrow Alb_r(X)$ is a holomorphic {\bf surjective submersion}.

\end{Cor}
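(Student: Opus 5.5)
The plan is to mimic the proof of Corollary \ref{Cor:A_r_surjective-submersion_c-par-page-ddbar}, with $E_r^{1,\,0}(X)$ replaced by the space ${\cal E}^1_r(X)$. That space is identified by Corollary \ref{Cor:cal-E-1-r_d-closed_1-0_canonical} (i) with the space of $d$-closed smooth $(1,\,0)$-forms. Holomorphicity of $A_r$ is already given by Proposition \ref{Prop:E_r-Albanese-map_hol_E_r_sGG}, so only submersivity and surjectivity remain.

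\textbf{Pointwise independence.} Complex parallelisability gives global holomorphic $(1,\,0)$-forms $\omega^1,\dots,\omega^n$ that are pointwise linearly independent. As in the earlier proof, they form a $\C$-basis of $H^{1,\,0}_{\bar\partial}(X,\,\C)$, identified with the space of holomorphic $1$-forms. Take the basis $\widehat{\alpha^{1,\,0}_1},\dots,\widehat{\alpha^{1,\,0}_{g_r}}$ of unique $d$-closed $(1,\,0)$-representatives of a basis of ${\cal E}^1_r(X)$, as in Proposition \ref{Prop:E_r-Albanese-map_hol_E_r_sGG}. By Corollary \ref{Cor:cal-E-1-r_d-closed_1-0_canonical} (ii), their Dolbeault classes are linearly independent in $H^{1,\,0}_{\bar\partial}(X,\,\C)$. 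Hence we may write
\begin{eqnarray*}
\widehat{\alpha^{1,\,0}_j}=\sum_{l=1}^n c_{jl}\,\omega^l, \qquad j=1,\dots,g_r,
\end{eqnarray*}
with a constant coefficient matrix $(c_{jl})$ of rank $g_r$. The pointwise independence of the $\omega^l(x)$ then gives pointwise linear independence of $\widehat{\alpha^{1,\,0}_1}(x),\dots,\widehat{\alpha^{1,\,0}_{g_r}}(x)$ at every $x\in X$.

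\textbf{Submersion and surjectivity.} Locally, on a small simply connected open set $U$, we write $\widehat{\alpha^{1,\,0}_j}=df_j$ with $f_j$ holomorphic. By formula (\ref{eqn:E_r-Albanese-map_coordinates_E_r_sGG}), $A_r$ equals $(f_1,\dots,f_{g_r})$ up to additive constants modulo $\Lambda$. Its differential at $z$ therefore has rows $\widehat{\alpha^{1,\,0}_j}(z)$, which have maximal rank $g_r$, so $A_r$ is a submersion. Surjectivity follows exactly as before. A submersion is open, and a map from a compact space to a Hausdorff space is closed, so $A_r(X)$ is open and closed in the connected torus $Alb_r(X)$, hence equal to it.

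\textbf{Main obstacle.} The only genuinely new point is the linear independence of the $\widehat{\alpha^{1,\,0}_j}$ as holomorphic forms. This is supplied by the injectivity in Corollary \ref{Cor:cal-E-1-r_d-closed_1-0_canonical} (ii), and in fact it is immediate: a $\bar\partial$-exact $(1,\,0)$-form vanishes, and the $\widehat{\alpha^{1,\,0}_j}$ are nonzero linearly independent forms. Everything else carries over verbatim from the page-$(r-1)$-$\partial\bar\partial$ case.
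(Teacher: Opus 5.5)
Your proposal is correct and takes the same approach as the paper. The paper also reruns the proof of Corollary \ref{Cor:A_r_surjective-submersion_c-par-page-ddbar} with the basis of $E_r^{1,\,0}(X)$ replaced by the unique $d$-closed $(1,\,0)$-representatives of a basis of ${\cal E}^1_r(X)$, and it likewise gets their linear independence in $H^{1,\,0}_{\bar\partial}(X,\,\C)$ from (ii) of Corollary \ref{Cor:cal-E-1-r_d-closed_1-0_canonical}, after which the steps you list (pointwise independence, the maximal-rank differential, and open--closed surjectivity) carry over unchanged.
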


\noindent {\it Proof.} The proof is the same as that of Corollary \ref{Cor:A_r_surjective-submersion_c-par-page-ddbar} after we replace a $\C$-basis $\{\alpha_1^{1,\,0},\dots , \alpha_{g_r}^{1,\,0}\}$ of $E_r^{1,\,0}(X)$ by a $\C$-basis $\{\widehat{\alpha^{1,\,0}_1},\dots , \widehat{\alpha^{1,\,0}_{g_r}}\}$ of ${\cal E}^1_r(X)$. (We identify each de Rham class in ${\cal E}^1_r(X)$ with its unique $d$-closed $C^\infty$ $(1,\,0)$-representative -- see Corollary \ref{Cor:cal-E_r-1_1-0_rep}.) The forms $\widehat{\alpha^{1,\,0}_1},\dots , \widehat{\alpha^{1,\,0}_{g_r}}$ remain linearly independent in $H_{\bar\partial}^{1,\,0}(X,\,\C)$ thanks to (ii) of Corollary \ref{Cor:cal-E-1-r_d-closed_1-0_canonical}.  \hfill $\Box$

\vspace{2ex}

It follows that the {\it page-(r-1)-$\partial\bar\partial$} hypothesis of Corollary \ref{Cor:A_r_c-par-page-ddbar_alg-dim_ineq} can be weakened to the {\it $E_r$-sGG} hypothesis.

\vspace{2ex}

We end this section with the following

\begin{Obs}\label{Obs:page_r-1_E_r-1_same-Albanese} Let $r\geq 2$ and suppose that a compact complex manifold $X$ is both {\bf page-$(r-1)$-$\partial\bar\partial$} and {\bf $E_{r-1}$-sGG}.

  Then, its $E_r$-Albanese torus $Alb_r(X)$ induced by the  page-$(r-1)$-$\partial\bar\partial$-property coincides with its  $E_{r-1}$-Albanese torus $Alb_{r-1}(X)$ induced by the $E_{r-1}$-sGG property.

\end{Obs}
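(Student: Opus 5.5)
The plan is to show that both tori are canonically the quotient of one and the same complex vector space, the dual $V^\star$ of
\begin{eqnarray*}V:=\{\alpha\in C^\infty_{1,\,0}(X,\,\C)\,\mid\, d\alpha=0\},\end{eqnarray*}
by one and the same lattice, the period lattice of $V$ over $H_1(X,\,\Z)$. I will work throughout with the alternative descriptions: (\ref{eqn:E_r-Albanese_alternative}) of Remark \ref{Rem:g_r_tori-duality} for $Alb_r(X)$ in the page-$(r-1)$-$\partial\bar\partial$ setting, and Remark \ref{Rem:E_r-Albanese-torus_sGG_re} (with $r-1$ in place of $r$) for $Alb_{r-1}(X)$ in the $E_{r-1}$-sGG setting. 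Both descriptions are canonically isomorphic to the original definitions.

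\emph{Step 1: the page-$(r-1)$-$\partial\bar\partial$ side.} Since $r\geq 2$, Lemma \ref{Lem:E_r_1-0_singleton_ker} identifies $E_r^{1,\,0}(X)$ with $V$. Hodge symmetry (\ref{eqn:E_r-Hodge-sym}) maps $\overline{\{\alpha^{0,\,1}\}_{E_r}}$, with $\alpha^{0,\,1}$ a $d$-closed $(0,\,1)$-representative, to the form $\overline{\alpha^{0,\,1}}\in V$. Under this identification, $\left(\overline{E_r^{0,\,1}(X)}\right)^\star\simeq V^\star$, and by (\ref{eqn:map_H_1_E_r_0-1_bar_star}) the lattice $\mbox{Im}^{(r)}H_1(X,\,\Z)$ becomes $\{\omega\mapsto\int_\xi\omega\}_{[\xi]\in H_1(X,\,\Z)}$. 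This is exactly the content of Proposition \ref{Prop:E_r-Albanese-map_hol}.

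\emph{Step 2: the $E_{r-1}$-sGG side.} By Corollary \ref{Cor:cal-E-1-r_d-closed_1-0_canonical} (i), applied with $r-1$, the space ${\cal E}^1_{r-1}(X)=G_{r-1}\left(\overline{E_{r-1}^{0,\,1}(X)}\right)$ identifies canonically with $V$, each de Rham class going to its unique $d$-closed $(1,\,0)$-representative. Thus $\left(\overline{E_{r-1}^{0,\,1}(X)}\right)^\star\simeq ({\cal E}^1_{r-1}(X))^\star\simeq V^\star$. By (\ref{eqn:map_H_1_E_r_0-1_bar_star_E_r-sGG}), the image of $[\xi]$ is again the functional $\omega\mapsto\int_\xi\omega$ on $V$, since the integral over a loop only depends on the de Rham class. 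Hence both quotients equal $V^\star/\Lambda_V$, where $\Lambda_V$ is the period lattice. As a consistency check, the dimensions agree: $b_1=2e_r^{0,\,1}$ by Remark \ref{Rem:g_r_tori-duality} and $b_1=2e_{r-1}^{0,\,1}$ by Corollary \ref{Cor:E_r-sGG_b_1_2e_r-01}. Since $E_r^{0,\,1}(X)=\ker d_{r-1}^{0,\,1}\subset E_{r-1}^{0,\,1}(X)$, the two spaces are actually equal.

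\emph{Step 3: compatibility of the two identifications.} The point requiring care is that the two identifications of $V^\star$ with the respective source spaces are compatible, i.e. that they come from the same dictionary between $(0,\,1)$-classes and forms in $V$. I would check this directly. Take $c\in E_r^{0,\,1}(X)=E_{r-1}^{0,\,1}(X)$ with a $d$-closed $(0,\,1)$-representative $\alpha^{0,\,1}$, which exists by Definition \ref{Def:page-1-dd-bar} (i). Then $F_{r-1}(\{\overline{\alpha^{0,\,1}}\}_{dR})=(0,\,\overline{c})$, so $G_{r-1}(\overline c)=\{\overline{\alpha^{0,\,1}}\}_{dR}$. By the uniqueness in Corollary \ref{Cor:cal-E_r-1_1-0_rep}, its $d$-closed $(1,\,0)$-representative is exactly $\overline{\alpha^{0,\,1}}$, the same form used in Step 1. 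Consequently the functionals $\widetilde I_\xi$ coincide, so the lattices, and indeed the Albanese maps of Definitions \ref{Def:E_r-Albanese-map} and \ref{Def:E_r-Albanese-map_E_r-sGG}, coincide as well.
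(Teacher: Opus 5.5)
Your proposal is correct. It contains the paper's argument, but its main mechanism is different. The paper gets $E_r^{0,\,1}(X)=E_{r-1}^{0,\,1}(X)$ from two ingredients: the inclusion coming from the vanishing incoming differential $d_{r-1}^{-r+1,\,r-1}$, and the two descriptions of $H^1_{dR}(X,\,\C)$, which implicitly carry your $b_1$-count. It then deduces $E_r^{1,\,0}(X)\simeq\overline{E_{r-1}^{0,\,1}(X)}$ and declares that ``the contention follows'', without checking that the lattices correspond.

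Your Steps 1 and 2 instead identify both tori intrinsically with $V^\star/\Lambda_V$, using (ii) of Lemma \ref{Lem:E_r_1-0_singleton_ker} and (i) of Corollary \ref{Cor:cal-E-1-r_d-closed_1-0_canonical}. This already gives a canonical isomorphism without the inclusion or the dimension count. It is exactly the identification with $d$-closed holomorphic $1$-forms that the paper only uses afterwards, at the start of Section \ref{section:examples}, to match both constructions with Blanchard's torus.

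Your Step 3 is then the paper's argument made precise. The computation $G_{r-1}(\overline{c})=\{\overline{\alpha^{0,\,1}}\}_{dR}$ shows that the identification through $V$ is the one induced by the equality $E_r^{0,\,1}(X)=E_{r-1}^{0,\,1}(X)$. It also shows that the $H_1(X,\,\Z)$-lattices, and even the two Albanese maps, agree, a point the paper leaves implicit.

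The paper's route is shorter. Yours is complete on the lattice and explains why the two constructions agree.
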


\noindent {\it Proof.} On every compact complex manifold $X$ and for every integer $r\geq 2$, we have the inclusion: \begin{eqnarray}\label{eqn:E_r_R_r-1_inclusion}E_r^{0,\,1}(X)\subset E_{r-1}^{0,\,1}(X)\end{eqnarray} because the map $d_{r-1}^{-r+1,\,r-1}:E_{r-1}^{-r+1,\,r-1}(X)\longrightarrow E_{r-1}^{0,\,1}(X)$ is necessarily identically zero. Indeed, $r\geq 2$ implies $-r+1<0$, which gives $E_{r-1}^{-r+1,\,r-1}(X) = \{0\}$.

It follows that if $X$ is both {\it page-$(r-1)$-$\partial\bar\partial$} and {\it $E_{r-1}$-sGG}, we have canonical isomorphisms: \begin{eqnarray*}E_r^{1,\,0}(X)\oplus E_r^{0,\,1}(X)\simeq H^1_{dR}(X,\,\C)\simeq\overline{E_{r-1}^{0,\,1}(X)}\oplus E_{r-1}^{0,\,1}(X)\end{eqnarray*} and $E_r^{1,\,0}(X)\simeq\overline{E_{r-1}^{0,\,1}(X)}$. The contention follows. \hfill $\Box$

\section{Examples}\label{section:examples} We start by pointing out that the $E_r$-Albanese torus and map we constructed through cohomological methods for {\it page-(r-1)-$\partial\bar\partial$} and {\it $E_r$-sGG} manifolds in $\S$\ref{section:page_1}, respectively $\S$\ref{section:E_r-sGG}, are Hodge-theoretical realisations of the classical Albanese torus and map associated by Blanchard [Bla58] with any compact complex manifold $X$.

Indeed, according to [Uen75, chapter IV, $\S.9$], if $\{\gamma_1,\dots , \gamma_{b_1}\}$ is a basis of the free part of $H_1(X,\,\Z)$ and $\{\omega_1,\dots , \omega_q\}$ is a basis of the $\C$-vector space $H^{1,\,0}_d(X,\,\C)$ consisting of all {\bf $d$-closed} holomorphic $(1,\,0)$-forms on $X$, Blanchard considered the points: \begin{eqnarray*}c_1:=\bigg(\int\limits_{\gamma_1}\omega_1,\dots , \int\limits_{\gamma_1}\omega_q\bigg), \dots , \hspace{1ex} c_{b_1}:=\bigg(\int\limits_{\gamma_{b_1}}\omega_1,\dots , \int\limits_{\gamma_{b_1}}\omega_q\bigg)\in\C^q\end{eqnarray*} in $\C^q$ and the subgroup $\Delta:=\Z\, c_1 + \dots + \Z\, c_{b_1}\subset\C^q$ generated by them as a $\Z$-module. He then showed that the $\C$-vector subspace $\C\, c_1 + \dots + \C\, c_{b_1}\subset\C^q$ generated by $\Delta$ is the whole of $\C^q$ and considered the complex torus $\C^q/\overline{\Delta}$ that he then showed to be indeed an Albanese torus of $X$ (hence the Albanese torus $\mbox{Alb}(X)$, since it is unique up to analytic isomorphism). The notation $\overline{\Delta}$ stands for the smallest {\bf closed} Lie subgroup of $\C^q$ containing $\Delta$ such that the connected component of $\overline{\Delta}$ containing $0$ is a vector subspace of $\C^q$.  

  We showed, among other things, that:

\vspace{1ex}

(a)\, if $X$ is a {\it page-(r-1)-$\partial\bar\partial$-manifold}, then $E_r^{1,\,0}(X)\simeq H^{1,\,0}_d(X,\,\C)$ (cf. (ii) of Lemma \ref{Lem:E_r_1-0_singleton_ker});

\vspace{1ex}

(b)\, if $X$ is an {\it $E_r$-sGG manifold}, then ${\cal E}^1_r(X)\simeq H^{1,\,0}_d(X,\,\C)$ (cf. (i) of Corollary \ref{Cor:cal-E-1-r_d-closed_1-0_canonical}).

\vspace{1ex}

\noindent Moreover, in both cases, our Hodge-theoretical discussion of $\S$\ref{section:page_1}, respectively $\S$\ref{section:E_r-sGG}, implies that the image $\mbox{Im}^{(r)}H_1(X,\,\Z)$ of $H_1(X,\,\Z)$ in the dual space $E_r^{1,\,0}(X)^\star$, respectively ${\cal E}^1_r(X)^\star$, is a {\it lattice}. Thus, it identifies canonically with Blanchard's $\overline{\Delta}$.

\vspace{2ex}

Thanks to the canonical identifications of our $E_r$-Albanese torus and map with Blanchard's classical Albanese torus and map, we can use Proposition 3.16.1 on p. 53 of Winkelmann's memoir [Win98] to strengthen the conclusion of our Corollary \ref{Cor:A_r_c-par-page-ddbar_alg-dim_ineq} and its {\it $E_r$-sGG} analogue.

\begin{Cor}\label{Cor:A_r_E_r-sGG_alg-dim_eq} Let $X$ be a compact {\bf complex parallelisable $E_r$-sGG manifold} and let $A_r:X\longrightarrow Alb_r(X)$ be its $E_r$-Albanese map. Then, we have the following equality of algebraic dimensions: \begin{eqnarray}\label{eqn:A_r_E_r-sGG_alg-dim_eq}a\bigg(Alb_r(X)\bigg)= a(X).\end{eqnarray}

\end{Cor}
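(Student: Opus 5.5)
The inequality $a(Alb_r(X))\leq a(X)$ already holds in this setting. Corollary \ref{Cor:A_r_surjective-submersion_c-par-E_r-sGG} shows that $A_r$ is a holomorphic surjective submersion. The proof of Corollary \ref{Cor:A_r_c-par-page-ddbar_alg-dim_ineq} used only holomorphicity and surjectivity, so it carries over verbatim and gives $a(Alb_r(X))\leq a(X)$. The task is therefore the reverse inequality $a(X)\leq a(Alb_r(X))$. For this I plan to combine the identification of $(Alb_r(X),\,A_r)$ with Blanchard's classical Albanese torus and map (the discussion opening Section \ref{section:examples}) with Winkelmann's Proposition 3.16.1 in [Win98].

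First, I make the identification precise. By (i) of Corollary \ref{Cor:cal-E-1-r_d-closed_1-0_canonical}, ${\cal E}^1_r(X)$ is canonically the space $H^{1,\,0}_d(X,\,\C)$ of $d$-closed holomorphic $1$-forms. Proposition \ref{Prop:E_r-Albanese-map_hol_E_r_sGG} writes $A_r$ as $x\mapsto(\int_{\xi_x}\widehat{\alpha^{1,\,0}_j})_j$ modulo the period group $\Lambda$. Since $\Lambda=\mbox{Im}^{(r)}H_1(X,\,\Z)$ is a lattice (Corollary \ref{Cor:lattice_E_r_sGG} together with Remark \ref{Rem:E_r-Albanese-torus_sGG_re}), $\Lambda$ is closed. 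Hence Blanchard's $\overline{\Delta}$ equals $\Lambda$. It follows that $Alb_r(X)=\C^{g_r}/\Lambda$ is Blanchard's $Alb(X)$ and $A_r$ is the classical Albanese map, up to translation by the choice of base point.

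Second, I invoke [Win98, Proposition 3.16.1]. For a compact complex parallelisable manifold $X=G/\Gamma$, it states, as I recall it, that the algebraic reduction of $X$ factors through the Albanese map, i.e. every meromorphic function on $X$ is the pull-back of one on $Alb(X)$. Equivalently, $a(X)=a(Alb(X))$. Combined with the first step, this yields (\ref{eqn:A_r_E_r-sGG_alg-dim_eq}).

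The main obstacle is matching the hypotheses of Winkelmann's statement exactly. One point is that his Albanese may be phrased via the quotient $G/[G,G]\Gamma$, or via closed $1$-forms among the left-invariant ones. In that case I would check that, on a compact complex parallelisable manifold, every holomorphic $1$-form is a constant-coefficient combination of invariant ones; this follows from the triviality argument in the proof of Corollary \ref{Cor:A_r_surjective-submersion_c-par-page-ddbar}. Then $d$-closed holomorphic $1$-forms correspond to $(\mathfrak g/[\mathfrak g,\mathfrak g])^\star$, so both tori agree. If the cited proposition only gives the factorisation of meromorphic functions, the equality still follows. Every meromorphic function on $X$ is then $f\circ A_r$, and algebraic independence transfers in both directions because $A_r$ is surjective, by the argument in Corollary \ref{Cor:A_r_c-par-page-ddbar_alg-dim_ineq}.
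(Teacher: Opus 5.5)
Your proposal is correct and follows the paper's proof essentially step for step. The inequality $a(Alb_r(X))\leq a(X)$ comes from the holomorphicity and surjectivity of $A_r$, and the reverse inequality comes from identifying $(Alb_r(X),\,A_r)$ with Blanchard's classical Albanese torus and map and then invoking part (4) of [Win98, Proposition 3.16.1], which says every meromorphic function on $X=G/\Gamma$ is pulled back from $Alb(X)$. Your extra checks (the period group is closed and hence equals $\overline{\Delta}$, and $d$-closed holomorphic $1$-forms correspond to $(\mathfrak g/[\mathfrak g,\mathfrak g])^\star$) only spell out details the paper leaves implicit.
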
  

\noindent {\it Proof.} The inequality ``$\leq$'' of (\ref{eqn:A_r_E_r-sGG_alg-dim_eq}) was proved in Corollary \ref{Cor:A_r_c-par-page-ddbar_alg-dim_ineq} and, as previously explained, still holds under the mere $E_r$-sGG hypothesis in place of the page-(r-1)-$\partial\bar\partial$ assumption.

On the other hand, the fact that the compact manifold $X$ is complex parallelisable is equivalent to the existence of a connected, simply connected, {\it complex} Lie group $G$ and of a lattice $\Gamma\subset G$ such that $X=G/\Gamma$.

For such an $X=G/\Gamma$, Proposition 3.16.1 of [Win98] ensures that:

\vspace{1ex}

(a)\, the Albanese torus $Alb(X)$ coincides with the quotient $G/I$, where $I$ is the smallest {\it closed} complex Lie subgroup of $G$ containing $G'\,\Gamma$ and $G'$ is the commutator subgroup of $G$ -- this follows from a universality property of $G/I$ (cf. (1) of [Win98, Proposition 3.16.1]);

\vspace{1ex}

(b)\, if $A:X=G/\Gamma\longrightarrow Alb(X) = G/I$ is the Albanese map of $X$, then every meromorphic function $f:X\dashrightarrow\C$ on $X$ is the pullback $A^\star g = g\circ A$ of a meromorphic function $g:Alb(X)\dashrightarrow\C$ on its Albanese torus -- see (4) of [Win98, Proposition 3.16.1].

\vspace{1ex}

This property (b) implies at once that $a(X)\leq a(Alb(X))$, proving the inequality ``$\geq$'' in (\ref{eqn:A_r_E_r-sGG_alg-dim_eq}). (Recall that $Alb_r(X)$ is canonically isomorphic to the classical $Alb(X)$.)  \hfill $\Box$

\subsection{Non-trivial examples of $E_r$-Albanese tori and maps}\label{subsection:non-trivial_examp}

\subsubsection{The Iwasawa manifold $I^{(3)}$}\label{subsubsection: Iwasawa} We briefly discussed it in Corollary \ref{Cor:Iwasawa_alg-dim_2}. Let us now further point out that, besides being a {\it page-$1$-$\partial\bar\partial$}-manifold (which leads to the $E_2$-Albanese torus $Alb_2(I^{(3)})$), $I^{(3)}$ is also an {\it sGG-manifold} ([PU, Corollary 1.5]) -- which leads to the $E_1$-Albanese torus $Alb_1(I^{(3)})$. Thanks to Observation \ref{Obs:page_r-1_E_r-1_same-Albanese}, these tori coincide: \begin{eqnarray*}Alb_1(I^{(3)}) =  Alb_2(I^{(3)}):=Alb(I^{(3)}).\end{eqnarray*} Moreover, $\mbox{dim}_\C Alb(I^{(3)}) = 2$. 

It is standard that $Alb(I^{(3)})$ is the complex $2$-dimensional projective torus $\C/\Z[i] \times \C/\Z[i]$, while the fibre of the Albanese map $A: = A_1 = A_2:I^{(3)}\longrightarrow Alb(I^{(3)})$ is the Gauss elliptic curve $\C/\Z[i]$.

\subsection{Case where $b_1(X)=0$}\label{subsection:examp_b_1=0} The complex dimension of the $E_r$-Jacobian and $E_r$-Albanese tori of any {\it page-(r-1)-$\partial\bar\partial$-manifold} and, more generally, of any {\it $E_r$-sGG manifold}, is $g_r= (1/2)\,b_1(X)$.

In particular, when $b_1(X)=0$, these tori and the associated $E_r$-Albanese map are trivial. More generally, the classical Albanese torus and map of every compact complex manifold $X$ with $b_1(X)=0$ (in particular, whenever $X$ is simply connected) are trivial. It then seems natural to look for alternative, but similar, objects constructed from the Dolbeault cohomology group $H^{0,\,1}_{\bar\partial}(X,\,\C)$ when it is non-trivial (a thing that can only happen when $X$ is not a $\partial\bar\partial$-manifold, if $b_1(X)=0$).

\vspace{1ex}

Recall that for any compact complex manifold $X$, the {\it short exponential exact sequence}: \begin{eqnarray*}0\longrightarrow \underline{\Z} \longrightarrow {\cal O}_X & \longrightarrow & {\cal O}_X^\star \longrightarrow 0 \\
  f & \longmapsto & e^{2\pi i f} \end{eqnarray*} induces a long exact sequence in cohomology: \begin{eqnarray*}\dots\longrightarrow H^1(X,\,\Z)\longrightarrow H^1(X,\,{\cal O}_X)\simeq H^{0,\,1}_{\bar\partial}(X,\,\C)\longrightarrow  H^1(X,\,{\cal O}_X^\star) = \mbox{Pic}\,(X) \xrightarrow{c_1} H^2(X,\,\Z)\longrightarrow\dots,\end{eqnarray*} where $\mbox{Pic}\,(X)=H^1(X,\,{\cal O}_X^\star)$ is the {\it Picard group} of $X$ (consisting of the isomorphism classes of holomorphic line bundles on $X$) and the map $c_1$ associates with each such class its {\it first Chern class}.

This yields the following canonical isomorphism for $\mbox{Pic}^0\,(X):=\ker c_1\subset\mbox{Pic}\,(X)$: \begin{eqnarray*}\mbox{Pic}^0\,(X)\simeq H^{0,\,1}_{\bar\partial}(X,\,\C)\Big/\mbox{Im}\,\bigg(H^1(X,\,\Z)\bigg).\end{eqnarray*} This shows, in particular, that $\mbox{Pic}^0\,(X)$ is a compact complex torus if and only if $b_1(X) = 2\,h^{0,\,1}_{\bar\partial}(X)$. The last condition was proved to be equivalent to $X$ being an {\it sGG manifold} (equivalently, an {\it $E_1$-sGG manifold}) in [PU, Theorem 1.6]. In particular, $\mbox{Pic}^0\,(X)$ is the {\it Jacobian torus} of $X$ (which coincides with the {\it $E_1$-Jacobian torus} of $X$) when $X$ is an $sGG$ manifold (cf. [Pop18] and the above Definition \ref{Def:E_r-Jacobi-Albanese-tori_E_r-sGG} in the case $r=1$).

In particular, if we assume that $H^2(X,\,\Z) = \{0\}$, every holomorphic line bundle $L$ on $X$ has zero first Chern class (i.e. $\mbox{Pic}\,(X)= \mbox{Pic}^0\,(X)$). If we also assume that  $H^1(X,\,\Z) = \{0\}$, the second relation below is a {\it canonical isomorphism}: \begin{eqnarray}\label{eqn:Pic-Pic0_b1_b2_0}\mbox{Pic}\,(X)= \mbox{Pic}^0\,(X)\simeq H^{0,\,1}_{\bar\partial}(X,\,\C)\simeq\C^g,\end{eqnarray} where $g:=h^{0,\,1}_{\bar\partial}(X)$. Thus, the Picard group $\mbox{Pic}\,(X)$ is a $\C$-vector space in this case. In particular, every holomorphic line bundle $L$ on $X$ admits a {\it unique square root} $L^{1/2}$ (denoted by $(1/2)\, L$ in the additive notation of $\mbox{Pic}\,(X)\simeq\C^g$) because the multiplication by any non-zero scalar is an automorphism of a vector space. This implies that $X$ admits a {\it unique spin structure}. (Recall that on a compact complex manifold $X$, {\it spin structures} identify with square roots of the canonical line bundle $K_X$.)

\subsubsection{The Calabi-Eckmann manifolds $X_{p,\,q}$}\label{subsubsection:C-E_S6} For all integers $p,q\geq 1$, let $X_{p,\,q}$ be the compact complex manifold consisting of the $C^\infty$ manifold $S^{2p+1}\times S^{2q+1}$ (a product of two odd-dimensional spheres) endowed with the {\bf Calabi-Eckmann} complex structure. It was shown in [PSU20b, Proposition 5.3 and Lemma 5.4] that each $X_{p,\,q}$ is an {\it $E_2$-sGG manifold} (because $E_2^{n,\,n-1}(X_{p,\,q})= \{0\}$, where $n=\mbox{dim}_\C X_{p,\,q} = p+q+1$), but not a page-$(r-1)$-$\partial\bar\partial$-manifold for any $r\geq 1$ (because the de Rham cohomology of $X_{p,\,q}$ is not pure -- see Theorem \ref{The:page_r-1_pure-characterisation}). The manifolds $X_{p,\,q}$ are not sGG ($=E_1$-sGG) either, for cohomological dimensional reasons: $b_1(X_{p,\,q}) = 0$ but $h^{0,\,1}_{\bar\partial}(X_{p,\,q}) =1$.

Thus, the $E_2$-Albanese torus and map of each $X_{p,\,q}$ are well defined (thanks to $\S$\ref{section:E_r-sGG}), but trivial. Since $b_1(X_{p,\,q}) = b_2(X_{p,\,q}) = 0$, the Calabi-Eckmann manifolds $X_{p,\,q}$ have properties (\ref{eqn:Pic-Pic0_b1_b2_0}) with $g=1$.

\vspace{1ex}

On the other hand, by [CE53, Theorem II], each manifold $X_{p,\,q}$ admits a complex analytic fibration $\sigma : X_{p, \, q}\to\Proj^p\times\Proj^q$  over a product of two complex projective spaces. The fibres are complex $1$-dimensional tori (i.e. elliptic curves). The holomorphicity and the surjectivity of $\sigma$ imply, by the same standard argument we used in the proof of Corollary \ref{Cor:A_r_c-par-page-ddbar_alg-dim_ineq}, that $a(\Proj^p\times\Proj^q)\leq a(X_{p, \, q})$. Meanwhile, $\Proj^p\times\Proj^q$ is projective, hence Moishezon, hence its algebraic dimension is maximal: $a(\Proj^p\times\Proj^q) = p+q$, leaving only two possibilities for the algebraic dimension of $X_{p, \, q}$, $p+q$ or $p+q+1 = \mbox{dim}_\C X_{p, \, q}$. However, the latter case never occurs because, otherwise, $X_{p, \, q}$ would be Moishezon, hence a {\it $\partial\bar\partial$-manifold}. But this is known not to hold (see the above reference to [PSU20b]). For example, $X_{p, \, q}$ is not even a strongly Gauduchon ($=$ sG $=E_1$-sG) manifold -- see [Pop14, $\S2$, (a)].

We conclude that $a(X_{p, \, q}) =p+q$.

\subsubsection{The $6$-dimensional sphere $S^6$}\label{subsubsection:S6} 
Let us denote by $X$ the $6$-dimensional sphere $S^6$ endowed with any hypothetical complex structure. It was shown in [Gra97, Lemma 4] that the Dolbeault cohomology group $H^{3,0}(X)$ is zero, so by Serre duality 
\begin{eqnarray}\label{eqn:H30-S6}
h^{3,0}(X)=h^{0,3}(X)=0.
\end{eqnarray}
These equalities imply, by [Uga00, Lemma 2.1], that the Fr\"olicher spectral sequence of $X$ degenerates at $E_3$. (This is denoted by $E_3(X)= E_{\infty}(X)$.)

On the other hand, in [Gra97, Theorem 5] it is proved that $h^{0,1}(X)\geq 1$. Furthermore, by [Uga00, Proposition 3.1] the following equalities for the Hodge numbers of $X$ hold:
\begin{eqnarray}\label{eqn:H01-S6}
h^{0,1}(X) = h^{0,2}(X) + 1,\quad\quad 
h^{1,1}(X) + h^{2,0}(X) = h^{1,0}(X) + h^{1,2}(X) + 1.
\end{eqnarray}
Therefore, $E_1(X)\not= E_{\infty}(X)$, so $X$ is not a (page-$0$-)$\partial\bar\partial$-manifold. 
In addition, since the first Betti number of $S^6$ is zero, we have $b_1(X)\ne 2\, h^{0,1}(X)$, which implies that $X$ is not ($E_1$-)sGG.

\begin{Prop}\label{Prop:S6-precise-r-2-or-3} 
Let $X$ be the $6$-dimensional sphere $S^6$ endowed with any hypothetical complex structure. Then, $X$ is always a page-$2$-$\partial\bar\partial$-manifold, hence also an $E_3$-sGG manifold. Furthermore:
\\[4pt]
\vspace{1ex}
\ \ (i)\, 
$X$ is $E_2$-sGG if and only if $h^{1,1}(X) = h^{1,2}(X) + 1$. 
\\
\vspace{1ex}
(ii)\, 
$X$ is a page-$1$-$\partial\bar\partial$-manifold if and only if $h^{1,1}(X) = h^{1,2}(X) + 1$ and $e_2^{0,\,2}(X)=0$.
\end{Prop}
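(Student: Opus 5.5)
The plan is to get the first assertion from Theorem \ref{The:page_r-1_pure-characterisation} with $r=3$. The Fr\"olicher spectral sequence of $X$ degenerates at $E_3$ by (\ref{eqn:H30-S6}) and [Uga00, Lemma 2.1]. It then remains to check that the de Rham cohomology of $X$ is pure, and this is automatic for $S^6$. For $1\leq k\leq 5$ we have $H^k_{dR}(X,\,\C)=0$, so (\ref{eqn:de-Rham_pure_def}) holds trivially. In degree $0$ the constants are of type $(0,\,0)$. In degree $6$ the class of any volume form is represented by a $d$-closed $(3,\,3)$-form. Hence $X$ is page-$2$-$\partial\bar\partial$, and Proposition \ref{Prop:page_r-1_ddbar_E_r-sGG} (with $r=3$) gives the $E_3$-sGG property.

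For (i), I will use Corollary \ref{Cor:E_r-sGG_b_1_2e_r-01}. Since $b_1(X)=0$, the manifold $X$ is $E_2$-sGG if and only if $e_2^{0,\,1}(X)=0$. The task is therefore to express $e_2^{0,\,1}$ through Hodge numbers. I will set up the bookkeeping
\begin{eqnarray*}e_2^{p,\,q}=h^{p,\,q}-\rk d_1^{p-1,\,q}-\rk d_1^{p,\,q},\end{eqnarray*}
with $d_1=\bar\partial$-induced $\partial$ of bidegree $(1,\,0)$. The same formula applies one page later to $d_2$ of bidegree $(2,\,-1)$. I then impose the constraints coming from $E_3=E_\infty$:
\begin{eqnarray*}e_3^{p,\,q}=0 \quad\mbox{for all}\quad (p,\,q)\notin\{(0,\,0),\,(3,\,3)\},\end{eqnarray*}
because all Betti numbers other than $b_0$ and $b_6$ vanish. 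Alongside these I will use Serre duality $e_r^{p,\,q}=e_r^{3-p,\,3-q}$ (Theorem \ref{The:duality_E_r}), the vanishing $h^{3,\,0}=h^{0,\,3}=0$, and $d_1^{0,\,0}=0$.

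The relevant $d_2$'s are $E_2^{0,\,1}\to E_2^{2,\,0}$, $E_2^{1,\,1}\to E_2^{3,\,0}=0$, $E_2^{0,\,2}\to E_2^{2,\,1}$, $E_2^{1,\,2}\to E_2^{3,\,1}$, together with their duals. Vanishing of $E_3$ makes each $d_2$ an isomorphism between the pieces it connects, which is what allows the equations to close up. From these I expect to solve for the ranks of $d_1$ and obtain an identity of the form
\begin{eqnarray*}e_2^{0,\,1}=h^{1,\,1}-h^{1,\,2}-1.\end{eqnarray*}
This would follow after using (\ref{eqn:H01-S6}) to eliminate $h^{0,\,1}$ and $h^{2,\,0}-h^{1,\,0}$, and (i) follows immediately from it.

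For (ii), I will again use Theorem \ref{The:page_r-1_pure-characterisation}, now with $r=2$. Purity is automatic as above, so $X$ is page-$1$-$\partial\bar\partial$ if and only if $E_2=E_\infty$. Given $E_3=E_\infty=0$ off $(0,\,0)$ and $(3,\,3)$, this is equivalent to the vanishing of every $e_2^{p,\,q}$ off the diagonal corners. Each nonzero $d_2$ is an isomorphism between its source and target. Combined with duality, the whole $E_2$ page then reduces to the pairs $e_2^{0,\,1}=e_2^{2,\,0}$ and $e_2^{0,\,2}=e_2^{2,\,1}$ and their Serre duals. One must also check that $E_2^{1,\,1}$ and $E_2^{1,\,2}$ are forced to vanish once these do. So the condition becomes $e_2^{0,\,1}=0$ and $e_2^{0,\,2}=0$, which by (i) is exactly the stated criterion.

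The main obstacle is the linear-algebra bookkeeping in (i): one has to make sure the constraints genuinely determine $e_2^{0,\,1}$ in terms of $h^{1,\,1}-h^{1,\,2}$, rather than leaving an undetermined rank of some $d_1$. I would first try to work row by row in low degrees, using $e_3^{1,\,0}=e_3^{0,\,1}=e_3^{2,\,0}=0$. If that leaves a free rank, I would add the Euler-characteristic identity $\sum(-1)^{p+q}h^{p,\,q}=\chi(S^6)=2$ and the column-wise identity for $\chi(X,\,\Oc_X)$.
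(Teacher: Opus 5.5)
Your argument for the first assertion and for (ii) is the paper's. It uses $E_3=E_\infty$ from [Uga00, Lemma 2.1], automatic purity and Theorem \ref{The:page_r-1_pure-characterisation}, and for (ii) the reduction to $E_2=E_\infty$. You differ in how you obtain the second-page dimensions. The paper simply quotes the table (\ref{eqn:e2-S6}) from [Uga00, Theorem 3.2], including $e_2^{0,1}(X)=h^{1,2}(X)-h^{1,1}(X)+1$. You instead propose to rederive it from $E_3=E_\infty$, Serre duality and (\ref{eqn:H01-S6}). That is a sound and more self-contained route.

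The bookkeeping also closes without any extra Euler-characteristic input. In row $q=0$ one has $d_1^{0,0}=0$, and $d_1^{2,0}=0$ because its target $E_1^{3,0}$ vanishes. Since $E_2^{1,0}$ is untouched by $d_2$, $E_2^{1,0}=E_3^{1,0}=0$ forces $\rk d_1^{1,0}=h^{1,0}$, hence $e_2^{2,0}=h^{2,0}-h^{1,0}$. The complex $0\to E_2^{0,1}\xrightarrow{d_2}E_2^{2,0}\to 0$ must be exact, so $e_2^{0,1}=h^{2,0}-h^{1,0}$. The second relation in (\ref{eqn:H01-S6}) then gives $e_2^{0,1}=h^{1,2}-h^{1,1}+1$.

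This has the opposite sign to the identity you announce, $e_2^{0,1}=h^{1,1}-h^{1,2}-1$, and your version cannot come out of the computation. Combined with $e_2^{0,1}\geq 0$, it would give $h^{1,1}\geq h^{1,2}+1$, the reverse of the true inequality. The slip is harmless for (i), which only uses where $e_2^{0,1}$ vanishes, but the formula should be corrected.

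The check you postpone in (ii) is immediate. $E_2^{1,1}\cong E_2^{3,0}=0$ always, and $E_2^{1,2}\cong E_2^{3,1}$, which is Serre dual to $E_2^{0,2}$. Together with $E_2^{0,3}=E_2^{1,0}=0$ and duality, $E_2=E_\infty$ holds if and only if $e_2^{0,1}=e_2^{0,2}=0$. This is exactly the stated criterion.
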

\noindent {\it Proof.} 
As mentioned above, the Fr\"olicher spectral sequence of $X$ satisfies $E_3(X)= E_{\infty}(X)$. Since $H^k_{dR}(X,\,\C)=\{0\}$ for every $1\leq k\leq 5$, the de Rham cohomology of $X$ is {\it pure} (see (\ref{eqn:de-Rham_pure_def})). It follows from Theorem \ref{The:page_r-1_pure-characterisation} that $X$ is a page-$2$-$\partial\bar\partial$-manifold. Hence, by Proposition~\ref{Prop:page_r-1_ddbar_E_r-sGG}, $X$ is also $E_3$-sGG.

For the rest of the proof, we will use the properties obtained in [Uga00, Theorem 3.2] for the dimensions $e_2^{p,\,q}(X)=\mbox{dim}_\C E_2^{p,\,q}(X)$, $0\leq p,\, q\leq 3$, of the spaces on the second page of the Fr\"olicher spectral sequence of $X$. Concretely, we have:
\begin{equation}\label{eqn:e2-S6}
\begin{array}{l}
e_2^{0,\,0}(X)=e_2^{3,\,3}(X)=1,\\[5pt]
e_2^{1,\,0}(X)=e_2^{1,\,1}(X)=e_2^{3,\,0}(X)=e_2^{0,\,3}(X)=e_2^{2,\,2}(X)=e_2^{2,\,3}(X)=0,\\[5pt]
p(X) \geq e_2^{0,\,1}(X)=e_2^{2,\,0}(X)=e_2^{1,\,3}(X)=e_2^{3,\,2}(X) \geq 0,\\[5pt]
q(X) \geq e_2^{0,\,2}(X)=e_2^{2,\,1}(X)=e_2^{1,\,2}(X)=e_2^{3,\,1}(X) \geq 0,\\[5pt]
e_2^{0,\,1}(X) = h^{1,2}(X) - h^{1,1}(X) + 1 \geq 0,
\end{array}
\end{equation}
where $p(X)=\min\{h^{0,\,1}(X),h^{2,\,0}(X)\}$, 
and $q(X)=\min\{h^{0,\,2}(X),h^{1,\,2}(X)\}$.

For the proof of {\it (i)} we use Theorem~\ref{The:E_r-sGG_Hodge-decomp}. The property $E_2$-sGG is equivalent to $b_1(X)=0=2\,e_2^{0,\,1}(X)$, so from the last equality in \eqref{eqn:e2-S6} it is equivalent to $h^{1,2}(X) - h^{1,1}(X) + 1=e_2^{0,\,1}(X)=0$.

Finally, by Theorem \ref{The:page_r-1_pure-characterisation} and the purity of the de Rham cohomology of $X$, the proof of {\it (ii)} is reduced to finding the conditions under which $E_2(X)= E_{\infty}(X)$. This, in turn, is equivalent to $e_2^{0,\,0}(X)= e_2^{3,\,3}(X)=1$ and $e_2^{p,\,q}(X)=0$ for the other values of $(p,q)$. So, the result follows from \eqref{eqn:e2-S6}.
\hfill $\Box$

\vspace{2ex}

As for the metric properties of $X$, the following result gives a restriction for the existence of SKT metrics. Recall that a Hermitian metric $\omega$ is said to be SKT if $\partial\bar\partial\omega=0$. Meanwhile, a Hermitian metric $\omega$ on an $n$-dimensional complex manifold is said to be {\it astheno-K\"ahler} (cf. [JY93]) if $\partial\bar\partial\omega^{n-2}=0$. When $n=3$, as is the case with our $X$, these two notions coincide.

\begin{Prop}\label{Prop:S6-SKT-metrics} Let $X$ be $S^6$ endowed with any hypothetical complex structure $J$. If $X$ supports an SKT metric, then $X$ is a {\bf page-$1$-$\partial\bar\partial$-manifold}. 
\end{Prop}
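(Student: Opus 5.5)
By Proposition \ref{Prop:S6-precise-r-2-or-3} (ii), it suffices to show two things when $X=(S^6,J)$ carries an SKT metric $\omega$, i.e. $\partial\bar\partial\omega=0$: first, $h^{1,1}(X)=h^{1,2}(X)+1$, equivalently $e_2^{0,\,1}(X)=0$ by the last line of (\ref{eqn:e2-S6}); second, $e_2^{0,\,2}(X)=0$. Since $n=3$, SKT coincides with astheno-K\"ahler. The plan is to turn the SKT form $\omega$ into a contradiction with any nonzero class in $E_2^{0,1}$ or $E_2^{0,2}$, or, in dual form, in $E_2^{3,2}$ or $E_2^{3,1}$. The tools are $b_1=b_2=\dots=b_5=0$, the duality of Theorem \ref{The:duality_E_r}, and the $E_r$-sGG machinery of Lemma \ref{Lem:E_r-sGG_map-characterisation} and Theorem \ref{The:E_r-sGG_Hodge-decomp}.

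\emph{Step 1: $e_2^{0,\,1}(X)=0$.} By Theorem \ref{The:E_r-sGG_Hodge-decomp} (3) with $b_1=0$, this is equivalent to $X$ being $E_2$-sGG. By Lemma \ref{Lem:E_r-sGG_map-characterisation}, that is in turn equivalent to the vanishing of $T:H^{2,2}_A\to E_2^{3,2}$, $[\Omega]_A\mapsto\{\partial\Omega\}_{E_2}$.

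The SKT form gives a pluriclosed real $(1,1)$-form $\omega$. Since $H^3_{dR}=0$, we have $d\omega=d\beta$ for some real $2$-form $\beta$, so $\omega-\beta$ is $d$-closed. Because $H^2_{dR}=0$, it is also exact: $\omega=d\eta+\beta'$ with the bookkeeping arranged suitably. Writing $\eta=\eta^{1,0}+\overline{\eta^{1,0}}$, this should give $\partial\omega=\partial\bar\partial\eta^{1,0}$-type exactness, namely $\partial\omega=\bar\partial\partial\eta^{1,0}$ modulo a controlled term. The aim is to use this to produce the needed exactness of $\partial\Omega$ for Gauduchon classes: pair any $E_2$-closed $(0,1)$-form $\alpha$ with $\omega\wedge\cdot$ and integrate by parts to show its $E_2$-class is zero.

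The more concrete route I would try first works with representatives. A class in $E_2^{0,1}$ is represented by $\alpha$ with $\bar\partial\alpha=0$ and $\partial\alpha=\bar\partial u$, $u$ of type $(1,0)$. Set $\theta=\alpha-u$. Then $d\theta=-\partial u$ is a $\partial$-exact, $d$-closed $(2,0)$-form. Since $H^2_{dR}=0$, $\partial u=d\phi$, and $\partial u$ defines a class in $E_2^{2,0}$. Using $e_2^{2,0}=e_2^{0,1}$ from (\ref{eqn:e2-S6}) and the pairing with $\omega$, I would compute $\int\partial u\wedge\overline{\partial u}\wedge\omega$. Integrating by parts twice with $\partial\bar\partial\omega=0$ should show it vanishes. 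Since $\partial u\wedge\overline{\partial u}\wedge\omega$ has a definite sign pointwise for a $(2,0)$-form in dimension $3$, this forces $\partial u=0$. Then $\theta$ is $d$-closed, hence $d$-exact, and so $\alpha$ is $E_2$-exact. The key technical point is verifying the boundary terms: $\int\partial u\wedge\bar\partial\bar u\wedge\omega$ reduces to $\int u\wedge\bar u\wedge\partial\bar\partial\omega$ up to terms that vanish because $\partial\partial u=0$ and $\bar\partial\alpha=0$.

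\emph{Step 2: $e_2^{0,\,2}(X)=0$.} By (\ref{eqn:e2-S6}), $e_2^{0,2}=e_2^{2,1}$. I would run the same positivity scheme on $E_2^{2,1}$, or on its dual $E_2^{1,2}$. Take an $E_2$-closed $(2,1)$-form $\gamma$ with $\bar\partial\gamma=0$ and $\partial\gamma=\bar\partial v$. Using $H^3_{dR}=0$, build from it a $d$-closed form and then a primitive. Testing against the real SKT form, with a Hodge–Riemann-type sign for primitive $(2,1)$- or $(1,2)$-forms relative to $\omega$, should show the class vanishes.

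I expect Step 2 to be the main obstacle. Primitivity with respect to a non-K\"ahler $\omega$ is not preserved by the differentials, so the sign argument will need a careful choice of representative, e.g. one harmonic for the $E_2$-Laplacian of $\omega$. If that fails, the fallback is to combine the Euler-characteristic identities (\ref{eqn:H01-S6}) with Step 1 and the degeneration $E_3=E_\infty$ to force $e_2^{0,2}=0$ numerically.
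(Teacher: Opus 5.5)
Your reduction to $e_2^{0,1}(X)=0$ and $e_2^{0,2}(X)=0$ via (ii) of Proposition~\ref{Prop:S6-precise-r-2-or-3} is correct. The ``concrete route'' of Step 1 also works; the first sketch there, with $\beta,\eta,\beta'$, proves nothing and should be dropped. Write $\bar\partial\alpha=0$ and $\partial\alpha=\bar\partial u$. Then $\bar\partial\partial u=0$ and $\partial\bar u=\bar\partial\bar\alpha$, and $\partial\omega$ is $d$-closed by SKT. Stokes then gives
\[\int_X\partial u\wedge\overline{\partial u}\wedge\omega=\int_X u\wedge\bar\partial\bar u\wedge\partial\omega=\int_X\bar\partial u\wedge\bar u\wedge\partial\omega=\int_X\alpha\wedge\bar\partial\bar\alpha\wedge\partial\omega=0.\]
Pointwise positivity on $(2,0)$-forms now forces $\partial u=0$. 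Since $b_1=0$, you get $\alpha-u=df$, hence $\alpha=\bar\partial f$. This Jost--Yau-type argument proves $e_2^{0,1}(X)=0$ directly, without the Hodge-number input the paper uses. The detours through $H^2_{dR}$, $e_2^{2,0}$ and the $E_r$-sGG machinery are unnecessary.

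The genuine gap is Step 2, and neither of your routes closes it.

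\emph{Positivity route.} For a $(2,1)$-form $\gamma$ on a threefold, $\gamma\wedge\bar\gamma$ is already of top degree. So $\omega$ can only enter through the primitive decomposition $\gamma=\gamma_0+\omega\wedge\gamma_1$, and on that decomposition the Hodge--Riemann pairing is indefinite. Harmonicity for an $E_2$-Laplacian of a non-K\"ahler metric gives no control on primitivity. Moreover, for the relevant form $\partial v$ (where $\partial\beta=\bar\partial v$, $\{\beta\}\in E_2^{0,2}$), one has $\int_X\partial v\wedge\overline{\partial v}=0$ by Stokes on any compact complex manifold, SKT or not, so it carries no information.

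\emph{Numerical fallback.} This cannot work either. By \eqref{eqn:e2-S6}, $e_2^{0,2}$ enters the row identities $\sum_p(-1)^ph^{p,q}=\sum_p(-1)^pe_2^{p,q}$ only through $e_2^{0,2}-e_2^{1,2}=0$ and $e_2^{2,1}-e_2^{3,1}=0$. The degeneration $E_3=E_\infty$ yields only those same equalities. Concretely, once $e_2^{0,1}=0$, \eqref{eqn:H01-S6} gives just $h^{2,0}=h^{1,0}$, which is $0$ by your Step 1 with $\alpha=0$. Meanwhile $h^{0,2}=h^{0,1}-1$ remains unconstrained.

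What is missing is an actual bound forcing $q(X)=\min\{h^{0,2},h^{1,2}\}=0$. The paper obtains $h^{0,2}=0$ from two inputs. The first is the Chiose--R\u{a}sdeaconu inequality $h^{0,1}_A\le h^{0,1}_{BC}+1$ for astheno-K\"ahler manifolds (equal to SKT when $n=3$). The second is McHugh's identities $h^{0,1}_{BC}=0$ and $h^{0,1}_A=h^{2,0}+h^{0,2}+1$, valid on any complex $S^6$. Together these give $h^{2,0}=h^{0,2}=0$, hence $p(X)=q(X)=0$, and \eqref{eqn:e2-S6} then yields both vanishings at once.
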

\noindent {\it Proof.} 
We will use a necessary condition, proved in [CR23, Theorem A], for the existence of astheno-K\"ahler metrics in terms of the Bott-Chern and Aeppli numbers of any compact complex manifold. In the case of $X=(S^6,\,J)$, this translates to: if $X$ admits an SKT metric, then
$$
h_{BC}^{0,1}(X) \leq h_{A}^{0,1}(X) \leq h_{BC}^{0,1}(X) + 1.
$$

On the other hand, the following properties of these numbers are proved in [McH17, Section 3] when $X$ is the 6-sphere endowed with any hypothetical complex structure: 
$$h_{BC}^{0,1}(X)=0,\quad\quad h_{A}^{0,1}(X)=h^{2,0}(X)+h^{0,2}(X)+1.$$

Now, suppose that $X=(S^6,\,J)$ is SKT.  Jost and Yau proved in [JY93] that every holomorphic 1-form on (any astheno-K\"ahler) $X$ is closed. This means that $h^{1,0}(X)=h_{BC}^{1,0}(X)$. Therefore, 
$$h^{1,0}(X)=h_{BC}^{1,0}(X)=h_{BC}^{0,1}(X)=0,$$ 
which, together with the condition,  
$$
h^{2,0}(X)+h^{0,2}(X)+1=h_{A}^{0,1}(X) \leq h_{BC}^{0,1}(X) + 1=1,
$$
implies that $h^{2,0}(X)+h^{0,2}(X) \leq 0$, i.e. $h^{2,0}(X)=0$ and $h^{0,2}(X)=0$. 
By \eqref{eqn:e2-S6}, this forces $p(X)=0=q(X)$, so $e_2^{0,\,2}(X)=0$ and $e_2^{0,\,1}(X)=0=h^{1,2}(X)-h^{1,1}(X)+1$. By {\it (ii)} in Proposition~\ref{Prop:S6-precise-r-2-or-3}, $X$ is a page-$1$-$\partial\bar\partial$-manifold.
\hfill $\Box$

\vspace{2ex}

The above results are valid for {\it any} hypothetical complex structure on $S^6$. In the following remarks, we observe some properties which depend on the value of the Hodge number $h^{1,1}(X)$.

\begin{Rem}\label{Rem:S6-h11-ne-0} It is announced in [A26] (see [Eng26] for a description of the geometric ideas behind the construction and a self-contained proof) that a complex structure exists on $S^6$ with the following Hodge numbers: \begin{eqnarray*}h^{0,1}(Y)=1\hspace{5ex} \mbox{and} \hspace{5ex} h^{1,0}(Y)=h^{2,0}(Y)=h^{0,2}(Y)=0.\end{eqnarray*}

We denote by $Y$ the compact complex manifolds of [A26] and reserve $X$ for a generic (hypothetical) complex structure on the 6-sphere. 
Then, by \eqref{eqn:e2-S6}, one has: \begin{eqnarray*} e_2^{0,\,1}(Y)=0 \hspace{5ex} \mbox{and} \hspace{5ex} e_2^{0,\,2}(Y)=0,\end{eqnarray*} so $h^{1,1}(Y)=h^{1,2}(Y)+1$. It follows that $Y$ is a {\bf page-$1$-$\partial\bar\partial$-manifold}, by (ii) in Proposition~\ref{Prop:S6-precise-r-2-or-3}.

\end{Rem}

\vspace{1ex}

As for the metric properties of the complex manifolds $Y$, we get the following

\begin{The}\label{The:non-sG_S6} Let $Y$ be a compact complex manifold claimed in [A26] to be homeomorphic to $S^6$.

  Then, $Y$ carries {\bf no strongly Gauduchon} (hence {\bf no balanced}) metric.  

\end{The}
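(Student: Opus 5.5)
The plan is to use the surjective holomorphic map $\pi:Y\longrightarrow\Proj^1$ that exists for the Alp\"oge--Claude structures (cf. the Introduction: all the new complex structures on $S^6$ admit such a map). We derive a contradiction with the existence of a strongly Gauduchon metric by a current/duality argument. Recall the standard characterisation (Popovici, [Pop13]): $Y$ carries an sG metric if and only if there is no nonzero positive $(1,\,1)$-current $T$ on $Y$ that is $d$-exact. So it suffices to exhibit a nonzero $d$-exact positive $(1,\,1)$-current on $Y$.

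\textbf{Constructing the current.} Let $\omega_{FS}$ be the Fubini--Study form on $\Proj^1$ and set $T:=\pi^\star\omega_{FS}$. This is a smooth, $d$-closed, semi-positive $(1,\,1)$-form on $Y$. It is nonzero because $\pi$ is surjective, hence has maximal rank $1$ on a dense open set. Its de Rham class lies in $H^2_{dR}(Y,\,\R)=H^2(S^6,\,\R)=\{0\}$, so $T=d\beta$ for some real $1$-form $\beta$. Thus $T$ is a nonzero $d$-exact positive $(1,\,1)$-current.

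\textbf{Deriving the contradiction directly.} To avoid quoting the duality result as a black box, suppose $\omega$ is sG. Then $\partial\omega^{2}=\bar\partial\Gamma$ for some $(3,\,1)$-form $\Gamma$ (here $n=3$). We can then find a real $d$-closed $4$-form $\Omega=\omega^2+\bar\Gamma+\Gamma$ whose $(2,\,2)$-component is $\omega^2>0$. Now compute
\[
0<\int_Y T\wedge\omega^2=\int_Y T\wedge\Omega=\int_Y d\beta\wedge\Omega=0 .
\]
Each step is justified as follows. The strict positivity holds because $T\geq0$, $T\not\equiv0$ and $\omega^2$ is strictly positive. The middle equality holds because $T$ is of type $(1,\,1)$, so only the $(2,\,2)$-part of $\Omega$ contributes. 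The last equality is Stokes' theorem, using $d\Omega=0$. This contradiction shows that no sG metric exists. Balanced metrics are in particular sG, which gives the parenthetical claim.

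\textbf{Main obstacle.} The key input is the existence of the nonconstant holomorphic map $Y\to\Proj^1$. For the structures of [A26] and [Eng26], this comes from the construction itself: $Y$ is built as a fibration over $\Proj^1$. I would extract this from those references. The remaining steps are formal once $H^2(S^6,\,\R)=0$ is used.
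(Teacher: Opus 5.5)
Your argument is correct and is essentially the paper's proof. Both arguments pull back $\omega_{FS}$ along the surjection $Y\to\Proj^1$ and write the result as $d\beta$ using $H^2(S^6,\,\R)=\{0\}$. Both then force $\int_Y d\beta\wedge\omega^2=0$ by Stokes and the sG condition: the paper splits $\beta$ into types and uses $\bar\partial\beta^{0,\,1}=0$, while you complete $\omega^2$ to a $d$-closed real $4$-form.

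One sign slip needs fixing: with $\partial\omega^2=\bar\partial\Gamma$, the closed form is $\Omega=\omega^2-\Gamma-\bar\Gamma$, not $\omega^2+\Gamma+\bar\Gamma$, whose $(3,\,2)$-part is $2\,\bar\partial\Gamma$. The correction changes nothing else in your argument.
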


\noindent {\it Proof.} By [A26] and [Eng26], there is a surjective holomorphic map $f:Y\longrightarrow \Proj^1$. We consider the real $(1,1)$-form $\alpha=f^*(\omega_{\mathrm{FS}})$ on $Y$, where $\omega_{\mathrm{FS}}$ is the Fubini-Study metric of $\Proj^1$. Then, $d\alpha=0$ and $\alpha\geq 0$.

Thus, for any Hermitian metric $\omega$ on $Y$, we get: \begin{eqnarray}\label{eqn:integral_alpha-omega-square_positivie}0 < \int_Y \alpha\wedge\omega^2.\end{eqnarray} Indeed, $\alpha\wedge\omega^2 = 2\,(\Lambda_\omega\alpha)\,dV_\omega\geq 0$ everywhere on $Y$ since $\Lambda_\omega\alpha = \lambda_1 + \lambda_2 + \lambda_3$ is the sum of the eigenvalues of $\alpha$ with respect to $\omega$ and all the eigenvalues $\lambda_j$ are non-negative everywhere due to $\alpha$ being semi-positive. This implies that we have at least $\int_Y \alpha\wedge\omega^2\geq 0$ and equality occurs if and only if $\lambda_j$ vanishes identically on $Y$ for every $j=1,2,3$. However, this last possibility cannot occur since, otherwise, $\alpha$ would be the zero form on $Y$. This is impossible because $\omega_{FS}>0$ everywhere on $\Proj^1$ and because $f$ is surjective, so $df$ has rank $1$ somewhere, so $f^\star\omega_{FS}$ is non-zero somewhere. 

\vspace{1ex}

On the other hand, $\alpha$ is $d$-closed, so it defines a de Rham cohomology class $\{\alpha\}_{dR}\in H^2_{dR}(Y,\,\C)$. Since $H^2_{dR}(Y,\,\C) = \{0\}$ and $\alpha$ is real, there exists a $C^\infty$ real $1$-form $\beta = \beta^{1,\,0} + \beta^{0,\,1}$ on $Y$ such that $\alpha=d\beta$. This means that $\beta^{1,\,0} = \overline{\beta^{0,\,1}}$ and \begin{eqnarray*}\partial\beta^{1,\,0} = 0, \hspace{5ex} \partial\beta^{0,\,1} + \bar\partial\beta^{1,\,0} = \alpha, \hspace{5ex} \bar\partial\beta^{0,\,1} = 0.\end{eqnarray*}

In particular, we get: \begin{eqnarray*}\int_Y \alpha\wedge\omega^2 = \int_Y\partial\beta^{0,\,1}\wedge\omega^2 + \int_Y\bar\partial\beta^{1,\,0}\wedge\omega^2:=I_1 + I_2,\end{eqnarray*} with $I_2=\overline{I_1}$. We will show that $I_1=0$ if $\omega$ were {\it strongly Gauduchon}. This would imply that $\int_Y \alpha\wedge\omega^2 = 0$ in that case, contradicting (\ref{eqn:integral_alpha-omega-square_positivie}).

\vspace{1ex}

Suppose that $\omega$ is {\it strongly Gauduchon}. Then, there exists $\eta\in C^\infty_{3,\,1}(Y,\,\C)$ such that $\partial\omega^2 = \bar\partial\eta$ on $Y$. (Recall that $\mbox{dim}_\C Y = 3$.) Two applications of the Stokes theorem yield: \begin{eqnarray*}I_1 = \int_Y\partial\beta^{0,\,1}\wedge\omega^2 = \int_Y\beta^{0,\,1}\wedge\partial\omega^2 = \int_Y\beta^{0,\,1}\wedge\bar\partial\eta = \int_Y\bar\partial\beta^{0,\,1}\wedge\eta = 0,\end{eqnarray*} where the last equality follows from $\bar\partial\beta^{0,\,1} = 0$.   \hfill $\Box$

\vspace{2ex}

 However, note that there always exists an {\bf $E_2$-sG} metric on $Y$, because $Y$ is page-$1$-$\partial\bar\partial$ (see Proposition~\ref{Prop:page_r-1_ddbar_E_r-sGG}).

\begin{Rem}\label{Rem:S6-h11-0}
  By Remark~\ref{Rem:S6-h11-ne-0}, the Hodge number $h^{1,1}(Y)$ is always non-zero for the complex manifolds $Y$ of [A26].

  Now, suppose that a complex structure $J$ with $h^{1,1}=0$ exists on $S^6$. By \eqref{eqn:e2-S6}, this would imply that $E_2\not= E_{\infty}$, because $e_2^{0,1}\ne 0$. By Proposition~\ref{Prop:S6-precise-r-2-or-3}, the complex manifold $X=(S^6,\,J)$ would be a page-$2$-$\partial\bar\partial$-manifold (hence also an $E_3$-sGG manifold), but not a page-$1$-$\partial\bar\partial$-manifold. Therefore, Proposition~\ref{Prop:S6-SKT-metrics} implies that the manifold $X=(S^6,\,J)$ would not admit any SKT metric.
\end{Rem}

\vspace{2ex}

The last remark ties in with Question \ref{introd_Question:S6_SKT} asked in the introduction.

\vspace{3ex}

\noindent {\bf References.} \\

\vspace{1ex}

\noindent 
[A26]\, L. Alp\"oge --- {\it The $(3, 4, \infty)$ modular family of 2-tori, completed
at its three special points, is a complex structure on $S^6$} --- available at https:
//alpo.ge/s6.pdf, 108 pp., 2026.



\vspace{1ex}

\noindent [Bla58]\, A. Blanchard --- {\it Sur les vari\'et\'es analytiques complexes} --- Ann. Sci. \'Ecole Norm. Sup. {\bf 73} (1958), 157-202.

\vspace{1ex}

\noindent [CE53]\, E. Calabi, B. Eckmann --- {\it A Class of Compact, Complex Manifolds Which Are Not Algebraic} --- Ann. of Math. {\bf 58} (1953) 494-500.

\vspace{1ex}

\noindent 
[CR23]\, I. Chiose, R. R\u{a}sdeaconu --- 
{\it Remarks on astheno-K\"aler manifolds, Bott-Chern and Aeppli cohomology groups} --- Ann. Global Anal. Geom. {\bf 63} (2023), no. 3, Paper No. 24, 23 pp.

\vspace{1ex}

\noindent [Dem97]\, J.-P. Demailly --- {\it Complex Analytic and Algebraic Geometry} ---

\noindent http://www-fourier.ujf-grenoble.fr/~demailly/books.html

\vspace{1ex}

\noindent 
[Eng26]\, P. Engel ---  {\it Complex structures on $S^6$} --- 
available at https://philip-engel.github.io/S6.pdf, 25 pp., 2026.

\vspace{1ex}

\noindent [Gau77] P. Gauduchon --- {\it Le th\'eor\`eme de l'excentricit\'e nulle} --- C. R. Acad. Sc. Paris, S\'erie A, t. {\bf 285} (1977), 387-390.

\vspace{1ex}

\noindent [Gra97]\, A. Gray -- {\it A property of a hypothetical complex structure on the six sphere} --- 
Boll. Un. Mat. Ital. B (7) {\bf 11}, suppl. fasc. 2 (1997), 251-255.

\vspace{1ex}

\noindent [JY93]\, 
J. Jost, S-T. Yau --- {\it A non-linear elliptic system for maps from Hermitian to Riemannian
manifolds and rigidity theorems in Hermitian geometry} --- 
Acta Math. {\bf 170} (1993), 221-54; Corrigendum Acta Math. {\bf 173} (1994), 307.

\vspace{1ex}

\noindent [KS23]\, H. Kasuya, J. Stelzig --- {\it Fr\"olicher Spectral Sequence and Hodge Structures on the Cohomology of Complex Parallelisable Manifolds.} --- Transform. Groups {\bf 28} (2023), no. 4, 1595-1606. 

\vspace{1ex}

\noindent 
[LR26]\, W. Liu, S. Rollenske --- {\it Yet Another Family of Complex Structures on $S^6$} ---arXiv:2609.33826 [math.DG]; https://doi.org/10.48550/arXiv.2609.33826

\vspace{1ex}

\noindent [McH17]\, A. McHugh --- {\it Narrowing cohomologies on complex $S^6$} 
--- Eur. J. Pure Appl. Math. {\bf 10} (2017), no. 3, 440-454.

\vspace{1ex}

\noindent [Pop13]\, D. Popovici --- {\it Deformation Limits of Projective Manifolds: Hodge Numbers and Strongly Gauduchon Metrics} --- Invent. Math. {\bf 194} (2013), 515-534.

\vspace{1ex}

\noindent [Pop14]\, D. Popovici --- {\it Deformation Openness and Closedness of Various Classes of Compact Complex Manifolds; Examples} --- Ann. Sc. Norm. Super. Pisa Cl. Sci. (5), Vol. {\bf XIII} (2014), 1-51. 

\vspace{1ex}

\noindent [Pop18]\, D. Popovici --- {\it The Albanese Map of sGG Manifolds and Self-Duality of the Iwasawa Manifold} --- Riv. Mat. Univ. Parma, Vol. 9 (2018), 177-190.

\vspace{1ex}

\noindent [Pop19]\, D. Popovici --- {\it Adiabatic Limit and Deformations of Complex Structures} --- arXiv e-print AG 1901.04087v5

\vspace{1ex}

\noindent [PU]\,  D. Popovici, L. Ugarte --- {\it Compact Complex Manifolds with Small Gauduchon Cone} --- Proc. London Math. Soc. (3) 116 (2018), no. 5, 1161-1186.

\vspace{1ex}

\noindent [PSU20a]\,  D. Popovici, J. Stelzig, L. Ugarte --- {\it Higher-Page Hodge Theory of Compact Complex Manifolds} --- arXiv e-print AG 2001.02313v3; Ann. Sc. Norm. Super. Pisa Cl. Sci. (5) Vol. XXV (2024), 1431-1464. 

\vspace{1ex}

\noindent [PSU20b]\, D. Popovici, J. Stelzig, L. Ugarte --- {\it Higher-Page Bott-Chern and Aeppli Cohomologies and Applications} --- J. reine angew. Math., doi: 10.1515/crelle-2021-0014.

\vspace{1ex}

\noindent [PSU20c]\, D. Popovici, J. Stelzig, L. Ugarte ---  {\it Deformations of Higher-Page Analogues of $\partial\bar\partial$-Manifolds} --- Math. Z. (2021), https://doi.org/10.1007/s00209-021-02861-0.

\vspace{1ex}

\noindent [Uen75]\, K. Ueno --- {\it Classification Theory of Algebraic Varieties and Compact Complex Spaces} --- LNM {\bf 439} (1975).  

\vspace{1ex}

\noindent [Uga00]\, L. Ugarte --- {\it Hodge numbers of a hypothetical complex structure on the six sphere} --- 
Geom. Dedicata {\bf 81} (2000), no. 1-3, 173-179.

\vspace{1ex}

\noindent 
[Via26]\,  J. Viaclovsky --- {\it A two-parameter family of complex structures on $S^6$} --- arXiv:2609.33785 [math.AG]; https://doi.org/10.48550/arXiv.2609.33785

\vspace{1ex}

\noindent [Win98]\, J. Winkelmann --- {\it Complex Analytic Geometry of Complex Parallelizable Manifolds} ---  M\'em. Soc. Math. Fr. (N.S.) No. {\bf 72-73} (1998).

\vspace{3ex}

\noindent Institut de Math\'ematiques de Toulouse, Universit\'e Paul Sabatier,

\noindent 118 route de Narbonne, 31062 Toulouse, France

\noindent Email: popovici@math.univ-toulouse.fr

\vspace{2ex}

\noindent and

\vspace{2ex}

\noindent Departamento de Matem\'aticas\,-\,I.U.M.A., Universidad de Zaragoza,

\noindent Campus Plaza San Francisco, 50009 Zaragoza, Spain

\noindent Email: ugarte@unizar.es

\end{document}